\newcommand{\erww}[1]{\mathbb{E}\left[{#1}\right]}
\newcommand{\capa}{\operatorname{cap}}
\newcommand{\R}{\mathbb{R}}
\newcommand{\N}{\mathbb{N}}
\newcommand{\erw}{\mathbb{E}}
\newcommand{\eps}{\ensuremath{\varepsilon}}
\newcommand{\W}{\mathcal{W}}
\newcounter{cst}
\newcommand{\ctel}[1]{K_{\refstepcounter{cst}\thecst}}
\newcommand{\diver}{\operatorname{div}}
\newtheorem{theorem}{Theorem}[section]
\newtheorem{definition}[theorem]{Definition}
\newtheorem{lemma}[theorem]{Lemma}
\newtheorem{corollary}[theorem]{Corollary}
\newtheorem{remark}[theorem]{Remark}
\newtheorem{proposition}[theorem]{Proposition}
\def\dint{\displaystyle\int}
\title{A nonlinear stochastic diffusion-convection equation with reflection}
\author{Niklas Sapountzoglou \thanks{Institute of Mathematics, Clausthal University of Technology, 38678 Clausthal-Zellerfeld, Germany \href{mailto:niklas.sapountzoglou@tu-clausthal.de}{\texttt{niklas.sapountzoglou@tu-clausthal.de}}} \and Yassine Tahraoui \thanks{Scuola Normale Superiore, Piazza dei Cavalieri 7, 56126 Pisa, Italy \href{mailto:yassine.tahraoui@sns.it}{\texttt{yassine.tahraoui@sns.it}}} \and Guy Vallet \thanks{Laboratory of Mathematics and 
Applications of Pau (LMAP) UMR CNRS 5142 \href{mailto:guy.vallet@univ-pau.fr}{\texttt{guy.vallet@univ-pau.fr}}}\and  Aleksandra Zimmermann \thanks{Institute of Mathematics, Clausthal University of Technology, 38678 Clausthal-Zellerfeld, Germany \href{mailto:aleksandra.zimmermann@tu-clausthal.de}{\texttt{aleksandra.zimmermann@tu-clausthal.de}}}}
\date{\today}
\begin{document}
	\maketitle
	\begin{abstract}
		We study a nonlinear, pseudomonotone, stochastic diffusion-convection evolution problem on a bounded spatial domain, in any space dimension, with homogeneous boundary conditions and reflection. The additive noise term is given by a stochastic It\^{o} integral with respect to a Hilbert space valued $Q$-Wiener process. We show existence of a solution to the pseudomonotone stochastic diffusion-convection equation with non-negative initial value as well as the existence of a reflection measure which prevents the solution from taking negative values. In order to show a minimality condition of the measure, we study the properties of quasi everywhere defined representatives of the solution with respect to parabolic capacity.
	\end{abstract}
	\section{Introduction}
\subsection{Motivation of the study and state of the art}
	We want to study the following nonlinear, pseudomonotone diffusion-convection problem with additive colored noise and reflection
		\[	\begin{cases}
			du \ -\operatorname{div}\,a(\cdot,u,\nabla u)\,dt + f(u) \, dt= \Phi \,dW +d\eta \\
			u(0,\cdot)=u_0\geq 0\\
			u\geq	0, \	\eta\geq 0 \	\text{and} \ \int_{Q_T} u \,d\eta=0
		\end{cases}\tag{P}\]
		on a bounded Lipschitz domain of $\mathbb{R}^d$. According to \cite[Section 3]{Par21}, without $\eta$ the sign of the solution $u$ would oscillate randomly. The \textit{reflection measure} $\eta$ prevents the solution $u$ from crossing $0$. The condition $\int_{Q_T} u \,d\eta=0$ says that the pushing away from zero is minimal in the sense that the support of $\eta$ is included in the set where $u$ is zero.
	Problems with reflection of the form (P) are special cases of obstacle problems which appear in the mathematical modeling of many phenomena such as the evolution of damage in a continuous medium  \cite{bauzet2017},  flows in porous media, phase transition as well as in some statistical mechanic problems in physics, financial mathematics, optimal control and biology, etc. We refer, e.g., to  \cite{DuvautLions72,  Haussmann1989,Yassine2020,Zambotti2017} and the references therein for more details. A rich literature exists also on deterministic obstacle problems, where in the deterministic (linear or nonlinear) setting, the problems can be formulated using variational inequalities, see, e.g., \cite{DuvautLions72, GuibeMTV2020,GUIBE2024,MTV19} and their references.    
	\\
	Concerning previous contributions on stochastic PDEs with reflection, without exhaustiveness, let us mention \cite{Haussmann1989} where the authors proved existence and uniqueness of strong solutions to a linear stochastic partial differential equation of parabolic type with $d$-dimensional Brownian motion in one spatial dimension using the framework of abstract stochastic variational inequalities. Then, existence and uniqueness of solutions to the stochastic heat equation with reflection on the spatial interval $[0, 1]$ with Dirichlet boundary conditions has been considered in \cite{Pardoux1992} for the case of additive space-time white noise and in \cite{Pardoux1993} for multiplicative noise.
	In  \cite{Matoussi2010}, the authors showed the existence and uniqueness of the solution to a quasilinear parabolic stochastic obstacle problem with a backward stochastic integral with respect to a one-dimensional Brownian motion using a probabilistic interpretation of the solution. 
	In \cite{Matoussi2014, Matoussi2015}, the authors proved an existence and uniqueness result for quasilinear stochastic PDEs with obstacle using analytical techniques based on parabolic potential theory. 
	Concerning fully nonlinear stochastic obstacle problems in the frame of Sobolev spaces, let us mention 
	\cite{TahraouiVallet2022}, where the authors proved existence and uniqueness of the variational solution  to an obstacle problem governed by a T-monotone nonlinear operator with multiplicative stochastic forcing using an \textit{ad hoc} perturbation of the stochastic term and a penalization of the constraint. Moreover, Lewy-Stampacchia's inequalities, an estimate ensuring the regularity of the reflection measure, have been proved. The extension of these results to the case of stochastic scalar conservation laws with constraints in the framework of entropy solutions has been considered in \cite{BISWAS2023}. \\
	Concerning some qualitative properties and the behavior of solutions to stochastic obstacle problems, let us mention \cite{Zambotti2001,zhang2010} in the quasilinear, one-dimensional case and \cite{Tahraoui2024invariant} on ergodicity and invariant measures in the case of T-monotone operators in any space dimension. Large deviations were considered in \cite{Zhang2009} for stochastic heat equations with reflection in one spatial dimension, in \cite{Matoussi2021LDP} for  quasilinear stochastic obstacle problems and  in \cite{Tahraoui2024LDP} for T-monotone obstacle problems with multiplicative noise.\\
	Recently, stochastic obstacle problems and Lewy-Stampacchia's inequalities for evolution equations with a nonlinear, second order pseudomonotone diffusion-convection operator of Leray-Lions type have been considered in \cite{STVZ2023}, combining the classical penalization method with a singular perturbation by a higher order operator, and Prokhorov and Skorokhod theorems. In this contribution, the existence and uniqueness results obtained by the authors have been limited by the ordered dual assumption, which ensures the regularity of the reflection measure. In particular, the case of with purely additive colored noise and reflection was not included. In this contribution, we focus on exactly this case. To the best of the authors knowledge, the existence of solutions for nonlinear, pseudomonotone,  diffusion-convection evolution equations with additive noise and reflection in any space dimension is an open problem. To show this result, we may apply the classical penalization method and pass to the limit. At the limit, we find a solution $u$ and a reflection measure $\eta$, which is non-negative random Radon measure on the space-time cylinder $Q_T$. It is known that $\eta$ is locally finite on $Q_T$, but it may not be finite. Therefore, we often use localized versions of $\eta$ or of the equation, respectively, i.e., in the proof of the strong convergence of the gradients, and, to obtain a result of narrow convergence. The most challenging part of our work is to give a sense to the minimality condition $\int_{Q_T} u \, d\eta=0$, if $(u,\eta)$ is a solution to (P). In arbitrary space dimension, $u$ may not be continuous and therefore we study fine properties of both, the solution $u$ and the measure $\eta$ to justify the measurability of an appropriate version of $u$ with respect to $\eta$. The necessary fine properties can be provided by parabolic capacity theory in the framework of soft measures and quasi everywhere defined representatives.
	\subsection{Notations and assumptions}\label{Aotsan}
	For $d\in\mathbb{N}=\{1,2,3,4\ldots\}$ let $D\subset \mathbb{R}^d, d\geq	2$  be a bounded domain with Lipschitz boundary, $T>0$ and $Q_T:=(0,T)\times D$. For an open subset $U\subset Q_T$ or $U\subset D$, the space of continuous functions on the closure of $U$, will be denoted by $\mathcal{C}(\overline{U})$ and the space of infinitely differentiable functions with compact support in $U$ will be denoted by $\mathcal{D}(U)$. We will refer to these functions as test functions. For $2\leq p<\infty$, we will write $p':=\frac{p}{p-1}$ and the standard Sobolev space over $D$ will be denoted by $W^{1,p}(D)$. The closure of $\mathcal{D}(D)$ with respect to the natural norm in $W^{1,p}(D)$ will be denoted by $W^{1,p}_0(D)$\footnote{In our situation, it corresponds to the space of elements of $W^{1,p}(D)$ with null trace.} and its topological dual by $W^{-1,p'}(D)$.   Using the standard multi-index notation $\alpha=(\alpha_1,\ldots,\alpha_d)$ with $\alpha_i\geq 0$ an integer (see, e.g., \cite[Sec. 9.1]{Brezis}), for $k\in\mathbb{N}$, we introduce the Hilbert spaces 
	\[H^k(D):=\left\{v\in L^2(D) \ | \ D^{\alpha}v \in L^2(D) \ \forall \alpha \ \text{with} \ |\alpha|\leq k \right\}, \]
	where $D^{\alpha} v$ are the spatial derivatives of $v$ in the sense of distributions and $H_0^k(D)$ denotes the closure of $\mathcal{D}({D})$ in $H^k(D)$ with respect to the $\Vert \cdot \Vert_{H^k(D)}$-norm. For more details on these function spaces, we refer to \cite[Sec. 9]{Brezis}.\\
	For a function $v:Q_T\rightarrow \mathbb{R}$ we define its positive part by $u^+:=\max\{u,0\}$ and its negative part by $u^{-}:=-\min\{u,0\}$.\\
	
	Let $(\Omega,\mathcal{F},\mathds{P}, (\mathcal{F}_t)_{t\geq 0}, (W_t)_{t \geq 0})$ be a stochastic basis with a complete, right continuous filtration $(\mathcal{F}_t)_{t\geq 0}$ and with a $Q$-Wiener process $(W_t)_{t\geq 0}$ with respect to $(\mathcal{F}_t)_{t\geq 0}$ taking values in $L^2(D)$\footnote{More precisely, let $((\beta^k_t)_{t\geq 0})_k$ be a sequence of independent, real-valued Wiener processes with respect to $(\mathcal{F}_t)_{t\geq 0}$, $U\subset L^2(D)$ be a separable Hilbert space and $Q:U\rightarrow U$ be a non-negative, symmetric trace class operator with $Q^{1/2}(U)=L^2(D)$. If $(e_k)_k$ is an orthonormal basis of $U$ made of eigenvectors of $Q$ with corresponding eigenvalues $(\lambda_k)_k\subset [0,\infty)$, then
			\[W(t):=\sum_{k=1}^{\infty}\sqrt{\lambda_k} e_k\beta^k_t,\ t\geq 0\] }. In this contribution, we want to study a nonlinear stochastic diffusion-convection equation with reflection. The problem of our interest may formally be written as:	find a pair	$(u,\eta)$	solution to
	\begin{align}\label{220822_01}
		\begin{cases}
			du \ -\operatorname{div}\,a(\cdot,u,\nabla u)\,dt + f(u) \, dt= \Phi \,dW +d\eta \\
			u(0,\cdot)=u_0\geq 0\\
			u\geq	0, \ \eta\geq 0 \	\text{and} \ \int_{Q_T} u \,d\eta=0.
		\end{cases}
	\end{align}
	We	refer	to	Theorem	\ref{main theorem}	for the precise notion of solution to	\eqref{220822_01}.\\
	In our setting, $f: \R \to \R$ is Lipschitz continuous with Lipschitz constant $L_f\geq 0$ such that $f(0)=0$ and $a:D\times\mathbb{R}^{d+1} \to \mathbb{R}^d$ is a Carath\'eodory function. Suppose that there exist $p\geq 2$, $\kappa\in L^1(D)$, non-negative functions $g,h\in L^{p'}(D)$ and constants $C_1>0$, $C_2\geq 0$, $C_3\geq 0$, $C_4\geq 0$ such that the following conditions hold:

	\begin{itemize}
		\item[$(A1)$] Monotonicity: $(a(x,\lambda,\xi)-a(x,\lambda,\zeta))\cdot(\xi-\zeta)> 0$
		for all $\lambda\in\mathbb{R}$, $\xi,\zeta\in\mathbb{R}^d$ with $\xi \neq \zeta$ and almost every $x\in D$.
		\item[$(A2)$] Coercivity and growth: 
		\[a(x,\lambda,\xi)\cdot \xi\geq \kappa(x)+C_1|\xi|^{p},\]
		\[|a(x,\lambda,\xi)|\leq C_2|\xi|^{p-1}+C_3|\lambda|^{p-1}+g(x)\]
		for all $\lambda\in\mathbb{R}$, $\xi\in\mathbb{R}^d$ and almost every $x\in D$.
		\item[$(A3)$] Lipschitz regularity: 
		\[|a(x,\lambda_1,\xi)-a(x,\lambda_2,\xi)|\leq \Big(C_4|\xi|^{p-1}+h(x)\Big)|\lambda_1-\lambda_2|\]
		for all $\lambda_1$, $\lambda_2\in \mathbb{R}$, for all $\xi\in\mathbb{R}^d$ and almost all $x\in D$.
	\end{itemize} 
	
	\begin{remark}
	Any Carath\'eodory function $a(x,\lambda,\xi)$ satisfying Assumptions $(A1)$-$(A3)$ induces a pseudomonotone, nonlinear Leray-Lions operator $u\mapsto -\operatorname{div}\, a(\cdot,u,\nabla u)$ defined on $W^{1,p}_0(D)$ with values in $W^{-1,p'}(D)$.
	Please note that $a(x, \lambda, \xi) = |\xi|^{p-2} \xi + F(\lambda)$, where $F:\mathbb{R}\rightarrow \mathbb{R}^d$ is Lipschitz continuous with Lipschitz constant $L_F\geq 0$ and $F(0)=0$, is a suitable choice, since $p \geq 2$. Hence, the $p$-Laplace operator with a first-order convection term given by $-\operatorname{div}\, a(\cdot,u,\nabla u) = -\operatorname{div}\, (|\nabla u|^{p-2} \nabla u + F(u))$ fits into our setting.
			\end{remark}
	
	The stochastic integral on the right-hand side of \eqref{220822_01} is understood in the sense of It\^{o}. In the following, we will denote the space of Hilbert-Schmidt operators from a separable Hilbert space $K$ to a separable Hilbert space $H$ by $HS(K;H)$. We assume the integrand $\Phi$ to be progressively measurable in $L^p(\Omega\times (0,T);HS(L^2(D);H^k(D)\cap H^1_0(D)))$ for $k\in\mathbb{N}$ such that $k>\max(\frac{d}{2}, \frac{2+d}{2} - \frac{d}{p})$.
	According to \cite[Theorem 1.4.4.1 and Theorem 1.4.3.1]{Gr}, these assumptions on $k$ ensure that $H^k(D)\cap H^1_0(D) \hookrightarrow \mathcal{C}(\overline{D})\cap W^{1,p}(D)\cap H^1_0(D)\subset \mathcal{C}(\overline{D})\cap W^{1,p}_0(D)$. Consequently, by \cite[Lemma 2.1]{FG95} the stochastic integral $\int_0^{\cdot} \Phi(s)\,dW_s$ is in $ L^p(\Omega;\mathcal{C}([0,T];H^k(D)\cap H^1_0(D) ))$  and therefore in $L^2(\Omega;\mathcal{C}([0,T];\mathcal{C}(\overline{D}))) \cap L^p(\Omega;L^p(0,T;W_0^{1,p}(D)))$.
	

\subsection{Outline}
Our work is organized as follows: In Section \ref{s2} we propose a definition of a solution to \eqref{220822_01} and formulate our main theorem on existence of solutions, namely Theorem \ref{main theorem}. Sections \ref{s3} to \ref{s5} are devoted to the proof of Theorem \ref{main theorem}. In Section \ref{s3}, we introduce the penalization procedure, prove \textit{a-priori} estimates, deduce convergence results and pass to the limit in the equation. Section \ref{s4} is devoted to the a.e. convergence of the gradients and the characterization of the reflection measure $\eta$ and its localized versions. In Section \ref{s5}, we study pointwise properties of the solution $u$ as well as the properties $\eta$ using parabolic capacity theory. In Section \ref{section-p-leq-2}, we consider the case when the parameter $p$ introduced within the Assumptions $(A1)$-$(A3)$ is allowed to be less than $2$. Necessary results from parabolic capacity theory are proved in the Appendix (see Section \ref{A}).

\section{Main results}\label{s2}
 \subsection{Complements of measure theory}
For any topological space $X$, the Borel-sigma field on $X$ will be denoted by $\mathcal{B}(X)$. In the following, we denote the vector space of continuous functions $\psi:Q_T\rightarrow\mathbb{R}$ with compact support by $\mathcal{C}_c(Q_T)$ and endow this space with its usual topology. Its dual space will be denoted by $\mathcal{C}_c(Q_T)^{\ast}$, which is identified with the set of Radon measures. An element $T\in \mathcal{C}_c(Q_T)^{\ast}$ is called \textit{non-negative (Radon measure)}, iff $T(\psi)\geq 0$ for all $\psi\in \mathcal{C}_c(Q_T)$ such that $\psi\geq 0$. For more details, see, e.g., \cite{M74,Rudin}. 	
\subsection{Definitions and main theorem}
In the following, whenever an object obviously depends on $\omega \in \Omega$, we don't write down this dependence on $\omega$ unless it is important to understand the arguments.
	
	\begin{definition}
		A random Radon measure is a family $(\eta_{\omega})_{\omega\in\Omega}$ of Radon measures. For simplicity, we will often omit the variable $\omega\in\Omega$ and write $\eta=(\eta_{\omega})_{\omega\in\Omega}$.
	\end{definition}
	\begin{definition}
		A random Radon measure $\eta$ on $Q_T$ is called weak-$\ast$ adapted iff, for any $\psi\in \mathcal{C}_c(Q_T)$ and  any $t\in(0,T)$, the mapping 
		\begin{align*}
			\Omega \ni \omega \mapsto \int_{(0,t]\times D} \psi(s,x)  \ d\eta_{\omega}(s,x) \quad \text{is $\mathcal{F}_t$-measurable.}
		\end{align*}	
	\end{definition}

\begin{definition}
		For $2\leq p<\infty$ we define the space
		\[\mathcal{W}:=\{v\in L^p(0,T;W^{1,p}_0(D)) \ | \ \partial_t v \in L^{p'}(0,T;W^{-1,p'}(D))\}.\]
		It is a separable reflexive Banach space endowed with its natural norm $\Vert \cdot \Vert_{\mathcal{W}}$ (see Section \ref{A}: Appendix for more details). Its dual space will be denoted by $\mathcal{W}'$.
	\end{definition}
\begin{definition}\label{Solution}
A pair $(u,\eta)$ is a solution to the obstacle problem \eqref{220822_01}, if and only if 
\begin{itemize}
\item[$i.)$] $u \geq 0$ a.e. in $\Omega\times Q_T$, 
\[u \in L_w^2(\Omega,L^\infty(0,T;L^2(D)))\cap L^p(\Omega; L^p(0,T; W_0^{1,p}(D)))\] 
\item[$ii.)$] There exists a right-continuous in time, adapted representative of $u$ with values in $L^{2}(D)$ satisfying $u(t=0^+)=u_0$.
\item[$iii.)$] There exists $\eta \in L^{p^\prime}(\Omega, \mathcal{W}^\prime)$ that is also a weak-$\ast$ adapted random non-negative Radon measure on $Q_T$ with, $\mathds{P}$-a.s. 
\begin{align*}
\partial_t\left(u-\int_0^{\cdot} \Phi \, dW\right) - \operatorname{div} a(\cdot, u , \nabla u) + f(u) = \eta \quad \text{in }\mathcal{W}^\prime.
\end{align*}
\item[$iv.)$] $\mathds{P}$-a.s. in $\Omega$ there exists a quasi everywhere defined representative of $u$ in $Q_T$, non-negative quasi everywhere in $Q_T$, such that $\dint_{Q_T} u\ d\eta =0$.
\end{itemize}
\end{definition}	
\begin{remark}
Note that by Proposition \ref{cadlag}, the solution $u$ in the sense of Definition \ref{Solution} is also a.s. weakly-left-limited in $L^2(D)$, thus c\`adl\' ag with values in  $W^{-1,p'}(D)$.
\end{remark}
	\begin{theorem}[Existence of solutions]\label{main theorem}
Let the assumptions in Section \ref{Aotsan} be satisfied and $u_0 \in L^2(\Omega \times D)$ be $\mathcal{F}_0$-measurable with $u_0 \geq 0$ a.e. in $\Omega \times D$. 
There there exists a solution $(u,\eta)$ in the sense of Definition \ref{Solution}.
	\end{theorem}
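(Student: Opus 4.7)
The plan is to follow the classical penalization strategy alluded to in the introduction, with particular care for the limiting reflection measure and the minimality condition. For $\varepsilon>0$ I consider the unconstrained SPDE
\begin{align*}
du_\varepsilon - \operatorname{div} a(\cdot,u_\varepsilon,\nabla u_\varepsilon)\, dt + f(u_\varepsilon)\, dt - \tfrac{1}{\varepsilon}\, u_\varepsilon^-\, dt = \Phi\, dW, \qquad u_\varepsilon(0)=u_0.
\end{align*}
The operator on the left is still pseudomonotone and of Leray--Lions type, since $-\tfrac{1}{\varepsilon}(\cdot)^-$ is a monotone Lipschitz zero-order perturbation, so existence of a variational solution $u_\varepsilon$ follows from the standard theory of stochastic pseudomonotone evolution equations with additive noise (e.g.\ Galerkin combined with Prokhorov--Skorokhod). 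Subtracting the stochastic convolution $z:=\int_0^\cdot \Phi\, dW$, which lies in $\mathcal{C}([0,T];\mathcal{C}(\overline D)\cap W^{1,p}_0(D))$ by the assumption on $\Phi$, reduces much of the analysis to a pathwise PDE with regular right-hand side.

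Uniform a priori estimates come next: an It\^o formula on $\tfrac{1}{2}\|u_\varepsilon\|_{L^2(D)}^2$ together with $(A2)$ gives $(u_\varepsilon)_\varepsilon$ bounded in $L^2(\Omega;L^\infty(0,T;L^2(D))) \cap L^p(\Omega;L^p(0,T;W^{1,p}_0(D)))$, while a further It\^o argument on $\tfrac{1}{2}\|u_\varepsilon^-\|^2$ (using $u_0\geq 0$) yields control of $\tfrac{1}{\sqrt{\varepsilon}}u_\varepsilon^-$ in $L^2(\Omega\times Q_T)$. Hence $u_\varepsilon^- \to 0$ and the family $\eta_\varepsilon := \tfrac{1}{\varepsilon} u_\varepsilon^-$ stays bounded in $L^{p'}(\Omega;\mathcal{W}')$ by duality from the penalized equation itself. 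Extracting a (not relabeled) subsequence I obtain $u_\varepsilon \rightharpoonup u\geq 0$, $a(\cdot,u_\varepsilon,\nabla u_\varepsilon) \rightharpoonup \chi$ and $\eta_\varepsilon \rightharpoonup \eta$ in the respective spaces.

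Non-negativity of $\eta$ as a Radon measure is inherited by testing against $\psi\in\mathcal{C}_c(Q_T)$ with $\psi\geq 0$, weak-$\ast$ adaptedness is transferred from $\eta_\varepsilon$, and since $\eta$ need not be globally finite on $Q_T$ one has to work with its localized versions on compact subsets, as emphasized in the introduction. The identification $\chi = a(\cdot,u,\nabla u)$ is the classical Leray--Lions/Minty step: using cut-offs $\varphi\in\mathcal{D}(Q_T)$ with $\varphi\geq 0$ to localize away from the reflection contribution of $\eta_\varepsilon$, one establishes $\nabla u_\varepsilon \to \nabla u$ a.e.\ on $Q_T$ by a Boccardo--Murat-type energy argument and concludes $\chi=a(\cdot,u,\nabla u)$ by continuity, following the strategy of~\cite{STVZ2023}.

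The most delicate point, and the main obstacle, is the minimality condition $\int_{Q_T} u\, d\eta = 0$. Since $u$ lies only in $L^p(0,T;W^{1,p}_0(D))$ it has no pointwise values a priori, so the integral is not directly meaningful. Here parabolic capacity theory (prepared in the Appendix and exploited in Section~\ref{s5}) is crucial: any element of $\mathcal{W}'$ which is also a non-negative Radon measure is \emph{soft}, i.e.\ does not charge sets of zero parabolic capacity, and every function in $\mathcal{W}$ admits a quasi everywhere defined representative. Applied pathwise, this singles out $\tilde u$, a q.e.\ defined, q.e.\ non-negative, $\eta$-measurable version of $u$, so that $\int_{Q_T}\tilde u\, d\eta$ makes sense; one direction $\int_{Q_T} \tilde u\, d\eta \geq 0$ is immediate. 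For the reverse inequality I would exploit the pathwise orthogonality $u_\varepsilon^+ \cdot u_\varepsilon^- \equiv 0$ along the penalization and match the narrow convergence of the localized versions of $\eta_\varepsilon$ with the fine convergence of quasi everywhere representatives of the $u_\varepsilon$. This matching of two different convergence modes is the main technical novelty beyond~\cite{STVZ2023} and the real core of the argument.
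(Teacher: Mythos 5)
Your overall architecture (penalization, It\^o energy estimates, boundedness of the penalization term in $L^{p'}(\Omega;\mathcal{W}')$ by duality, Boccardo--Murat a.e.\ convergence of gradients with a cut-off to handle the only locally finite measure, and parabolic capacity to make sense of $\int_{Q_T} u\, d\eta$) matches the paper's. But there is a genuine gap precisely at the step you yourself flag as the core: the reverse inequality $\int_{Q_T}\tilde u\, d\eta\le 0$. The diagonal orthogonality $u_\varepsilon^+\,u_\varepsilon^-\equiv 0$ does not survive the limit: $u_\varepsilon^+$ converges only in $L^2$ (or weakly in energy spaces), while $\tfrac1\varepsilon u_\varepsilon^-\,\varphi\,dx\,dt$ converges only narrowly as measures, and a product of two such sequences does not converge to the product of the limits --- $u$ need not be continuous in dimension $d\ge 2$, so no uniform convergence is available to rescue this. ``Matching the two convergence modes'' is not an argument; something must decouple the two indices.

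The missing ingredient is the comparison principle for the penalized problems, which the paper proves first (Lemma \ref{Lemma 1}) and which drives everything afterwards: the approximations $u_n$ are \emph{monotone nondecreasing}, both for every fixed $t$ a.e.\ in $D$ and quasi-everywhere in $Q_T$. This gives (a) the quasi-everywhere defined representative as a monotone supremum $\hat u=\sup_n u_n^+$ q.e.\ (your appeal to ``every function in $\mathcal{W}$ has a q.e.\ representative'' only applies to $u-\int_0^\cdot\Phi\,dW$, and by itself does not tell you which representative interacts correctly with $\eta$); (b) pointwise-in-time convergence $u_n(t)\to u(t)$ in $L^2(D)$ for \emph{all} $t$, which is what yields the right-continuous adapted representative and $u(0^+)=u_0$ required by Definition \ref{Solution}~$ii.)$, a point your sketch does not address; and crucially (c) the \emph{off-diagonal} orthogonality $u_m^+\,u_n^-=0$ q.e.\ for all $n\ge m$. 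With $m$ frozen, $T_K(u_m^+)$ is a fixed quasi-continuous function, equal off a set of arbitrarily small capacity to a continuous bounded function (Tietze--Urysohn, Lemma \ref{Lemma 271124_01}); combining the equidiffusivity of $(\eta_n^\varphi)_n$ (Lemma \ref{Lemma 261124_03}) with the narrow convergence then gives $\int_{Q_T}T_K(u_m^+)\,d\eta^\varphi\le\liminf_n\int_{Q_T}T_K(u_m^+)\,d\eta_n^\varphi=0$, and monotone convergence in $K$ and then in $m$ (again using monotonicity of $u_m$) yields $\int_{Q_T}\hat u\, d\eta^\varphi=0$ and finally $\int_{Q_T}\hat u\,d\eta=0$ after exhausting $Q_T$ by cut-offs. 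Without the comparison principle none of these steps is available, so you should establish it (and the resulting monotone structure of the approximation) before anything else.
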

	
	\begin{remark}	Let $\psi:D\to \mathbb{R}$ be an obstacle function such that $\psi\in W^{1,p}_0(D)$. Then, for a smaller class of operators, our analysis yields the existence of a solution to \eqref{220822_01} with $u\geq \psi$. More precisely, assume moreover that the operator is independent of the second argument, i.e., $a(x,\lambda,\xi)=a(x,\xi)$ and is strongly monotone, namely there exists $\beta>0$ such that
			\begin{align}\label{strong-mono}
				(a(x,\xi)-a(x,\zeta)) \cdot (\xi-\zeta)\geq  \beta \vert \xi-\zeta \vert^p
			\end{align}
			for all $\xi,\zeta\in \mathbb{R}^d$ with $\xi \neq \zeta$.
			An example of an operator satisfying $(A1)$-$(A3)$ and \eqref{strong-mono} is given by the p-Laplace operator $ - \operatorname{div} a(\cdot, \nabla u) = - \operatorname{div} (\vert \nabla u \vert^{p-2} \nabla u)$.
			 Indeed, setting $v:=u-\psi \geq 0$ and considering the modified operator $\widetilde{a}(x,\nabla v)=a(x,\nabla (v+\psi))$, it is not difficult to check that $\widetilde{a}$ satisfies $(A1)$-$(A3)$. 
	\end{remark}
	\section{Approximation procedure, estimates and limit equation}\label{s3}
	\begin{remark}\label{Remark 110724_01}
		A slight modification of the results in \cite{VZ21} yields that there exists a unique probabilistically strong solution to the stochastic partial differential equation
		\begin{align}\label{1}
			u(t)= u_0+\int_0^t \operatorname{div}a(\cdot, u , \nabla u) \, ds - \int_0^t f(u) \, ds  + \int_0^t \Phi \, dW,
		\end{align}
		i.e., there exists a unique predictable
		$u \in L^2(\Omega;\mathcal{C}([0,T]; L^2(D)))\cap L^p(\Omega; L^p(0,T; W_0^{1,p}(D)))$, $u(0, \cdot)=u_0$ in $L^2(\Omega\times D)$
		and $\mathds{P}$-a.s. equation \eqref{1} holds in $L^2(D)$, for all $t \in [0,T]$. Moreover let us remark that $\mathds{P}$-a.s.
		\begin{align*}
			\partial_t\Big[u-\int_0^{\cdot} \Phi \, dW\Big] - \operatorname{div} a(\cdot, u , \nabla u)  + f(u) = 0	\quad	\text{ in $L^{p'}(0,T; W^{-1,p'}(D)).$}
		\end{align*}
	\end{remark}
	\begin{remark}\label{241211_rem01}
		The assumptions in Section \ref{Aotsan} yield that 
		\[u-\int_0^{\cdot} \Phi \, dW \in L^p(\Omega; L^p(0,T; W_0^{1,p}(D)))\] 
		and 
		\[\partial_t \Big[u-\int_0^{\cdot} \Phi \, dW \Big] \in L^{p'}(\Omega; L^{p'}(0,T; W^{-1,p'}(D))),\]
		hence $u-\int_0^{\cdot} \Phi \, dW \in L^{p'}(\Omega, \mathcal{W})$. Moreover, since $\mathds{P}$-a.s. $\int_0^{\cdot} \Phi \, dW \in \mathcal{C}(\overline{Q_T})$, Lemma \ref{220822_lem01} yields that, $\mathds{P}$-a.s., $u$ is $\capa_p$-quasi continuous. Moreover, by Lemma \ref{241211_lem01}, the $\capa_p$-quasi continuous representative of $u$ coincides with the representative of $u\in \mathcal{C}([0,T];L^2(D))$ $\mathds{P}$-a.s. In particular, for the $\capa_p$-quasi continuous representative of $u$ we have, $\mathds{P}$-a.s., $u(0)=u_0$ a.e. in $D$.
	\end{remark}
	First of all we want to show a comparison principle with respect to the Lipschitz continuous perturbation $f$ and the initial value $u_0$.
	
	\begin{lemma}\label{Lemma 1}
		Let $u$ be a strong solution to \eqref{1} with Lipschitz continuous reaction term $f$ and initial value $u_0 \in L^2(\Omega \times D)$. Moreover, let $v$ be a strong solution to \eqref{1} with Lipschitz continuous reaction term $g$ and initial value $v_0\in L^2(\Omega \times D)$, where $f \geq g$ and $u_0 \leq v_0$ a.e. in $\Omega \times D$.
Then, there exists a measurable set $\Omega_{f,g,u,v}\in\mathcal{F}$ of full measure 
		such that for all $\omega\in \Omega_{f,g,u,v}$:
\begin{itemize}
		\item [$i.)$] For all $t \in [0,T]$,  $u(t) \leq v(t)$ a.e. in $D$. 
\item [$ii.)$] $u \leq v$ quasi everywhere in $Q_T$ (see Section \ref{A}: Appendix).
\end{itemize}
	\end{lemma}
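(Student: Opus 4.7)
The plan is to exploit that $u$ and $v$ satisfy equations driven by the same additive noise, so that $w := u - v$ solves, $\mathds{P}$-a.s., the \emph{deterministic} evolution equation
\[
\partial_t w - \operatorname{div}\bigl(a(\cdot,u,\nabla u) - a(\cdot,v,\nabla v)\bigr) + f(u) - g(v) = 0 \quad \text{in } L^{p'}(0,T;W^{-1,p'}(D)),
\]
with $w \in \mathcal{W}$ for $\mathds{P}$-a.a.\ $\omega$. On this deterministic event I would run a Stampacchia-type argument. Fix the Lipschitz cut-off $\theta_\delta(s) := \min(s^+/\delta, 1)$, set $\Theta_\delta(s) := \int_0^s \theta_\delta(r)\,dr$, and apply the chain rule for $\mathcal{W}$-functions to get, for every $t \in [0,T]$,
\[
\int_D \Theta_\delta(w(t))\,dx = -\int_0^t\!\!\int_D \bigl(a(\cdot,u,\nabla u) - a(\cdot,v,\nabla v)\bigr)\cdot \nabla w\,\theta_\delta'(w)\,dx\,ds - \int_0^t\!\!\int_D (f(u)-g(v))\,\theta_\delta(w)\,dx\,ds,
\]
the initial contribution vanishing because $u_0 \leq v_0$.

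I would split the first integrand into $(a(\cdot,u,\nabla u)-a(\cdot,u,\nabla v))\cdot \nabla w$ plus $(a(\cdot,u,\nabla v)-a(\cdot,v,\nabla v))\cdot \nabla w$. Assumption (A1) together with $\theta_\delta'(w)\geq 0$ gives a favourable sign for the first piece, which can be dropped on the useful side. For the second piece, the crucial observation is that $\theta_\delta'(w) = \delta^{-1}\mathds{1}_{\{0<w<\delta\}}$ is supported where $w\,\theta_\delta'(w)\leq 1$, so Assumption (A3) yields the pointwise bound
\[
\bigl|(a(\cdot,u,\nabla v)-a(\cdot,v,\nabla v))\cdot \nabla w\bigr|\,\theta_\delta'(w) \leq (C_4|\nabla v|^{p-1}+h)\,|\nabla w|\,\mathds{1}_{\{0<w<\delta\}},
\]
whose right-hand side is an $L^1(Q_T)$-majorant (by H\"older's inequality, since $|\nabla v|^{p-1}, h \in L^{p'}$ and $|\nabla w| \in L^p$) that vanishes pointwise a.e.\ as $\delta \to 0$. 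Dominated convergence then kills this term. Writing $f(u)-g(v) = (f(u)-f(v)) + (f(v)-g(v))$, the assumption $f\geq g$ makes the second summand contribute with a favourable sign under $\theta_\delta(w)\geq 0$, while the Lipschitz bound $|f(u)-f(v)|\leq L_f w^+$ controls the first. Passing to the limit $\delta \to 0$ (using $\Theta_\delta(s)\to s^+$ monotonically and $\theta_\delta(w)\to\mathds{1}_{\{w>0\}}$ with a bounded majorant) I would arrive at
\[
\int_D w^+(t)\,dx \leq L_f \int_0^t \int_D w^+(s)\,dx\,ds,
\]
and Gronwall's lemma then forces $w^+ \equiv 0$. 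Upgrading from a.e.\ $t$ to every $t\in[0,T]$ uses the $\mathcal{C}([0,T];L^2(D))$-regularity recalled in Remark \ref{Remark 110724_01}, and the full-measure set $\Omega_{f,g,u,v}$ is obtained as the intersection of the countably many $\mathds{P}$-a.s.\ events used above.

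For (ii) I would invoke the $\capa_p$-quasi continuous representatives $u^*, v^*$ of $u, v$ provided by Remark \ref{241211_rem01}. By (i), $u^* \leq v^*$ a.e.\ in $Q_T$ on $\Omega_{f,g,u,v}$, so $\min(u^*,v^*)$ is itself a $\capa_p$-quasi continuous representative of $u$, and the uniqueness of such representatives up to equality $\capa_p$-quasi everywhere (see Section \ref{A}) forces $u^* = \min(u^*,v^*)$, and hence $u \leq v$, quasi everywhere. I expect the main obstacle to be the clean handling of the (A3)-term in the $\delta\to 0$ limit: since the monotonicity in (A1) is only strict, not strong, the truncation is essential to prevent this term from blowing up, whereas the a.e.-to-quasi-everywhere upgrade is short given the parabolic capacity machinery developed in the appendix.
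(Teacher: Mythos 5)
Your proposal is correct and follows essentially the same route as the paper: the paper also reduces to the pathwise deterministic equation for $u-v$ (which lies in $\mathcal{W}$ because the additive noise cancels), tests with an approximation of the Heaviside function, splits the diffusion term into an $(A1)$-signed part and an $(A3)$-controlled part that vanishes as $\delta\to 0$ by the same truncation-plus-domination argument, handles the reaction term via $f\geq g$ and the Lipschitz bound, applies Gronwall, and upgrades from a.e.\ to q.e.\ using the quasi-continuity results of the appendix. The only difference is cosmetic: the paper uses a $C^2$ approximation $\eta_\delta$ of the positive part and tests with $\eta_\delta'$, whereas you use the piecewise-linear cut-off $\theta_\delta$ directly, which requires the (standard) chain rule in $\mathcal{W}$ for Lipschitz rather than $C^1$ nonlinearities.
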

	\begin{proof}
		For $\delta>0$ we consider the sequence $(\eta_{\delta})_{\delta >0}$ of real valued functions which is a smooth approximation of the positive part, denoted by $\operatorname{id}^+$ in the following, i.e.,
		\begin{align*}
			\eta_{\delta}(r):= \begin{cases}
				0, \, &r <0, \\
				-\frac{1}{2\delta^3}r^4 + \frac{1}{\delta^2}r^3, \, &0 \leq r \leq \delta, \\
				r- \frac{\delta}{2}, \, &r > \delta.
			\end{cases}
		\end{align*}
		Moreover, we have, for all $r \in \R$:
		\begin{align*}
			\begin{array}{lll}
				\bullet\quad \eta_{\delta} \in C^2(\mathbb{R})  
				&\bullet\quad 0 \leq \eta_{\delta}(r) \leq r^+   
				&\bullet\quad \eta_{\delta}(r) \to \operatorname{id}^+(r)\text{ as }\delta \to 0^+  
				\\[0.1cm] 
				\bullet\quad   0 \leq \eta_{\delta}'(r) \leq 1
				&\bullet\quad  0 \leq \eta_{\delta}''(r) \leq \frac{3}{2\delta}  
				&\bullet\quad  \operatorname{supp} \eta_{\delta}'' \subset [0, \delta].
			\end{array}
		\end{align*}
		There exists a measurable set $\Omega_{f,g,u,v} \subset \Omega$, of full measure, \textit{a priori} depending on $f,g,u,v$, such that for each $\omega\in\Omega_{f,g,u,v}$, we	have $u,v \in \mathcal{C}([0,T];L^2(D))$ and 
		\begin{align}\label{eq110724_01}
			\partial_t[u-v] - \operatorname{div} a(\cdot, u , \nabla u)-a(\cdot, v , \nabla v) + f(u) -g(v) = 0
		\end{align}
		in $L^{p'}(0,T; W^{-1,p'}(D))$. Since $u-v= u - \int_0^{\cdot} \Phi \, dW - (v - \int_0^{\cdot} \Phi \, dW) \in \mathcal{W}$,	we can test \eqref{eq110724_01} with $\eta_{\delta}'(u-v)$ and apply integration by parts in time to obtain
		\begin{align*}
			I_1 -I_2 + I_3 + I_4=0,
		\end{align*}
		where
		\begin{align*}
			I_1 &=\int_D \eta_{\delta} (u(t) - v(t)) \, dx  \to \int_D (u(t) - v(t))^+ \, dx ~\textnormal{as} ~ \delta \to 0^+,\\
			I_2 &=\int_D \eta_{\delta} (u_0 - v_0) \, dx =0, 
		\end{align*}
		\begin{align*}
			I_3 =&\int_0^t \int_D \eta_{\delta}'' (u - v) [a(x,u,\nabla u) - a(x,v, \nabla v)] \cdot \nabla (u-v)  \, dx \, ds \\
			=&\int_0^t \int_D \eta_{\delta}'' (u - v) [a(x,u,\nabla u) - a(x,u, \nabla v)] \cdot \nabla (u-v)  \, dx \, ds \\
			&+\int_0^t \int_D \eta_{\delta}'' (u - v) [a(x,u,\nabla v) - a(x,v, \nabla v)] \cdot \nabla (u-v)  \, dx \, ds \\
			\geq& \int_0^t \int_D \eta_{\delta}'' (u - v) [a(x,u,\nabla v) - a(x,v, \nabla v)] \cdot \nabla (u-v)  \, dx \, ds \\
			\geq& -\int_0^t \int_D \eta_{\delta}''(u-v) \Big[|C_4 |\nabla v|^{p-1} + h(x)|\Big] |u-v| |\nabla(u-v)|  \to 0 ~\textnormal{as} ~ \delta \to 0^+. 
		\end{align*}
		\begin{align*}
			I_4 &= \int_0^t \int_D  (f(u) - g(v)) \eta_{\delta}'(u-v) \, dx \, ds 
			\\
			&=\int_0^t \int_D  (f(u) - f(v) + f(v) - g(v)) \eta_{\delta}'(u-v) \, dx \, ds 
			\\
			&\geq \int_0^t \int_D (f(u) - f(v)) \eta_{\delta}'(u-v) \, dx \, ds 
			\\
			&\geq - L_f\int_0^t \int_D |u-v| \eta_{\delta}'(u-v) \, dx \, ds
			= - L_f\int_0^t \int_{D\cap\{u \geq v\}} (u-v) \eta_{\delta}'(u-v) \, dx \, ds
			\\
			&\to 
			- L_f\int_0^t \int_D (u-v)^+ \, dx \, ds ~\textnormal{as} ~ \delta \to 0^+.
		\end{align*}
		Therefore, for any $\omega\in  \Omega_{f,g,u,v} $ we have
		\begin{align*}
			\int_D (u(t) - v(t))^+ \, dx \leq L_f\int_0^t \int_D (u-v)^+ \, dx \, ds
		\end{align*}
		for all $t \in [0,T]$. Gronwall's lemma yields
		\begin{align}\label{220421_01}
			\int_D (u(t) - v(t))^+ \, dx =0
		\end{align}
		for all $t \in [0,T]$. Hence, for any $\omega\in\Omega_{f,g,u,v}$  and all $t\in[0,T]$, $(u(t)-v(t))^{+}=0$ a.e. in $D$ and therefore $u(t) \leq v(t)$ a.e. in $D$. In particular, for any $\omega\in \Omega_{f,g,u,v}$, $u \leq v$ a.e. in $Q_T$ and Corollary \ref{LemPositiveQuasiCont} yields $u \leq v$ q.e. on $Q_T$ for all $\omega\in\Omega_{f,g,u,v}$, where the quasi-exceptional set in $Q_T$ may depend on $\omega\in\Omega_{f,g,u,v}$ .
		
	\end{proof}
	Now, for $n \in \N$ we set $f_n(r):= f(r) - n r^-$. Then, there exists a unique strong solution to equation \eqref{1} with $f$ replaced by $f_n$, \textit{i.e.}, there exists a unique predictable process $u_n \in L^2(\Omega; \mathcal{C}([0,T]; L^2(D)))\cap L^p(\Omega; L^p(0,T; W_0^{1,p}(D)))$ such that $u_n(0, \cdot)=u_0$ in $L^2(\Omega\times D)$ and $\mathds{P}$-a.s. in $\Omega$
	\begin{align}\label{2}
		u_n(t)= u_0 +\int_0^t \operatorname{div} a(\cdot, u_n , \nabla u_n) \, ds - \int_0^t f(u_n) \, ds + n \int_0^t u_n^- \, ds + \int_0^t \Phi \, dW
	\end{align}
	holds true in $L^2(D)$, for all $t \in [0,T]$. 
	Moreover, $\mathds{P}$-a.s. in $\Omega$, $u_n$ is a solution  in $L^{p'}(0,T; W^{-1,p'}(D))$ to the equation
	\begin{align}\label{3}
		\partial_t (u_n - \int_0^{\cdot} \Phi \, dW) - \operatorname{div} a(\cdot, u_n , \nabla u_n) + f(u_n) - nu_n^- =0.
	\end{align}

	\subsection{Estimates in expectation}\label{subsection-estimate-exp}
	\begin{lemma}\label{110724_lem1}
		Let $u_n$ be the unique strong solution to \eqref{2}. Then, for all $t\in [0,T]$, 
		\begin{align}\label{231019_06}
			\begin{aligned}
				&\frac{1}{2}\erww{\Vert u_n(t)\Vert_2^2}+\erww{\int_0^t\int_D\kappa(x)+C_1|\nabla u_n(s)|^p\,dx\,ds}+\erww{\int_0^t\Vert\sqrt{n}u_n^-(s)\Vert_2^2\,ds}\\
				&\leq \frac{1}{2}\erww{\Vert u_0\Vert_2^2}+\frac{1}{2}\erww{\int_0^t\Vert \Phi(s)\Vert^2_{\operatorname{HS}}\,ds}+L_f\erww{\int_0^t\Vert u_n(s)\Vert_2^2\,ds}.
			\end{aligned}
		\end{align}	
	\end{lemma}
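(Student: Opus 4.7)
The plan is to apply the Itô formula in the variational framework to the functional $t\mapsto \frac{1}{2}\Vert u_n(t)\Vert_2^2$, starting from \eqref{3}, and then to take expectation. The regularity is exactly right for this: $u_n\in L^p(\Omega;L^p(0,T;W_0^{1,p}(D)))\cap L^2(\Omega;\mathcal{C}([0,T];L^2(D)))$, the drift $-\operatorname{div}\,a(\cdot,u_n,\nabla u_n)+f(u_n)-nu_n^-$ belongs to $L^{p'}(0,T;W^{-1,p'}(D))$ $\mathds{P}$-a.s., and $\int_0^{\cdot}\Phi\,dW$ takes values in $H^1_0(D)$. This places us in the classical variational setting of Krylov--Rozovskii/Pardoux, which yields, $\mathds{P}$-a.s. and for every $t\in[0,T]$,
\begin{align*}
\frac{1}{2}\Vert u_n(t)\Vert_2^2 =\; &\frac{1}{2}\Vert u_0\Vert_2^2 - \int_0^t\!\!\int_D a(x,u_n,\nabla u_n)\cdot\nabla u_n\,dx\,ds - \int_0^t\!\!\int_D f(u_n)u_n\,dx\,ds \\
&+ n\int_0^t\!\!\int_D u_n^-\, u_n\,dx\,ds + \int_0^t (u_n(s),\Phi(s)\,dW_s)_{L^2(D)} + \frac{1}{2}\int_0^t \Vert\Phi(s)\Vert_{\operatorname{HS}}^2\,ds.
\end{align*}

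Next, I would treat each deterministic term individually. The coercivity part of $(A2)$ gives $\int_D a(x,u_n,\nabla u_n)\cdot\nabla u_n\,dx\geq \int_D \kappa(x) + C_1|\nabla u_n|^p\,dx$, which moves to the left-hand side with the desired sign. The penalization term collapses via the elementary pointwise identity $r\cdot r^-=-(r^-)^2$, contributing exactly $-\int_0^t\Vert\sqrt{n}u_n^-(s)\Vert_2^2\,ds$, which also moves to the left. For the reaction term, since $f(0)=0$ and $f$ is Lipschitz with constant $L_f$, I obtain $|f(u_n)u_n|\leq L_f u_n^2$ pointwise, producing the term $L_f\erww{\int_0^t\Vert u_n(s)\Vert_2^2\,ds}$ on the right after integration and expectation.

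Finally, taking expectation kills the stochastic integral, provided that the integrand $(u_n,\Phi\,\cdot)_{L^2(D)}$ is square-integrable on $\Omega\times(0,T)$, which follows from the standing integrability of $u_n$ and the Hilbert--Schmidt hypothesis on $\Phi$. Collecting all contributions produces \eqref{231019_06}. I do not expect a genuine obstacle here: the only step deserving care is the martingale property of the Itô integral, and if one prefers to sidestep any integrability check at this stage, a standard stopping-time localization through $\tau_N:=\inf\{t\in[0,T]:\Vert u_n(t)\Vert_2>N\}\wedge T$, combined with Fatou's lemma on the left and monotone convergence on the right as $N\to\infty$, bypasses the issue entirely.
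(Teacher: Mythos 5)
Your proposal is correct and follows essentially the same route as the paper: apply It\^{o}'s formula with $\frac{1}{2}\Vert\cdot\Vert_{L^2(D)}^2$ to the penalized equation, use the coercivity in $(A2)$, the identity $r\,r^-=-(r^-)^2$, and the Lipschitz bound $|f(u_n)u_n|\leq L_f u_n^2$, then take expectation so the stochastic integral vanishes. The extra remarks on the martingale property and the optional stopping-time localization are sound but not needed beyond what the paper's (terser) proof already assumes.
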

	\begin{proof}
		Applying  It\^{o}'s formula with $\frac{1}{2} \Vert \cdot \Vert_{L^2(D)}^2$ to equality \eqref{2} yields $\mathds{P}$-a.s. in $\Omega$
		\begin{align}\label{4}
			\begin{aligned}
				&\frac{1}{2} \int_D u_n(t)^2 \, dx + \int_0^t \int_D a(x, u_n, \nabla u_n) \cdot \nabla u_n \, dx \, ds \\
				+&\int_0^t \int_D f(u_n)u_n \, dx \, ds+n \int_0^t \int_D |u_n^-|^2 \, dx \, ds \\
				= &\frac{1}{2} \int_D u_0^2 \, dx + \int_0^t (u_n , \Phi \, dW)_2 + \frac{1}{2}\int_0^t \Vert \Phi(s)\Vert^2_{\operatorname{HS}} \,ds 
			\end{aligned}
		\end{align}
		for all $t \in [0,T]$, for all $n\in \mathbb{N}$. Taking the expectation in \eqref{4}, using (A2) yields the result.
	\end{proof}
	
	From Lemma \ref{110724_lem1} we immediately have the following result:
	
	\begin{lemma}\label{231019_lem1}
		There exist constants $K_1,K_2,K_3,K_4 >0$, not depending on $n \in \N$ such that
		\begin{align}\label{240806_01}
			\erww{\sup_{t\in[0,T]}\Vert u_n(t)\Vert_2^2}\leq K_1,
		\end{align}
		
		\begin{align}\label{240806_02}
			\erww{\int_0^T\Vert \nabla u_n(s)\Vert_p^p\,dt}\leq K_2,
		\end{align}
		
		\begin{align}\label{240806_03}
			\erww{\int_0^T\int_D |a(x, u_n,\nabla u_n)|^{p'}\,dx\,dt}\leq K_3,
		\end{align}
		
		\begin{align}\label{240806_04}
			\erww{\int_0^T\left\Vert\sqrt{n} u_n^{-}(s)\right\Vert_2^2\,ds}\leq K_4.
		\end{align}
	\end{lemma}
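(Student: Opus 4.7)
The plan is to extract the four bounds from inequality \eqref{231019_06} in the following order: first obtain a uniform bound on $\erww{\Vert u_n(t)\Vert_2^2}$ via Gronwall (pointwise in $t$), then deduce \eqref{240806_02} and \eqref{240806_04} by inserting this back into \eqref{231019_06} at $t=T$, then pass from \eqref{240806_02} to \eqref{240806_03} using the growth condition $(A2)$, and finally upgrade the pointwise-in-$t$ bound to the path-wise supremum bound \eqref{240806_01} by returning to the pre-expectation It\^o identity \eqref{4} and applying the Burkholder-Davis-Gundy inequality.

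For the first three estimates, I would take \eqref{231019_06} at $t=T$, drop the two non-negative terms $\erww{\int_0^T C_1\Vert \nabla u_n\Vert_p^p\,ds}$ and $\erww{\int_0^T\Vert \sqrt{n}u_n^-\Vert_2^2\,ds}$, and bound the $\kappa$-term by $T\Vert \kappa^-\Vert_{L^1(D)}$. This yields $\erww{\Vert u_n(t)\Vert_2^2}\le C_0+2L_f\int_0^t\erww{\Vert u_n(s)\Vert_2^2}\,ds$ with $C_0$ depending only on the data, and Gronwall's lemma gives $\sup_{t\in[0,T]}\erww{\Vert u_n(t)\Vert_2^2}\le K_0$ uniformly in $n$. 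Feeding this back into \eqref{231019_06} at $t=T$ produces \eqref{240806_02} and \eqref{240806_04} directly. For \eqref{240806_03}, the growth bound in $(A2)$ together with $(a+b+c)^{p'}\le 3^{p'-1}(a^{p'}+b^{p'}+c^{p'})$ gives
\[|a(x,u_n,\nabla u_n)|^{p'}\le C\bigl(|\nabla u_n|^p+|u_n|^p+g(x)^{p'}\bigr),\]
and Poincar\'e's inequality in $W^{1,p}_0(D)$ reduces the $|u_n|^p$-contribution to \eqref{240806_02}, while $g\in L^{p'}(D)$ handles the last term.

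The less routine step is \eqref{240806_01}, since the supremum lies inside the expectation. For this, I would not take the expectation in \eqref{4} but instead take the supremum over $t\in[0,T]$ first and then apply $\erw$. The coercivity term is non-negative up to $T\Vert \kappa^-\Vert_{L^1(D)}$, the term involving $f$ is controlled by $L_f\int_0^T\Vert u_n\Vert_2^2\,ds$, and the term $n\Vert u_n^-\Vert_2^2$ is non-negative, so they are absorbed on the left or handled by the already-known pointwise-in-$t$ bound. The crucial contribution is the stochastic integral, which by the BDG inequality satisfies
\[\erww{\sup_{t\in[0,T]}\Bigl|\int_0^t (u_n,\Phi\,dW)_2\Bigr|}\le C_{\mathrm{BDG}}\,\erww{\Bigl(\int_0^T\Vert u_n(s)\Vert_2^2\Vert \Phi(s)\Vert_{HS}^2\,ds\Bigr)^{1/2}}.\]
Pulling $\sup_{t}\Vert u_n(t)\Vert_2$ out of the time integral and applying Young's inequality then yields a bound of the form $\tfrac{1}{4}\erww{\sup_t\Vert u_n(t)\Vert_2^2}+C\,\erww{\int_0^T\Vert \Phi\Vert_{HS}^2\,ds}$; the first term is absorbed on the left-hand side, and combining with the pointwise Gronwall bound already obtained produces \eqref{240806_01}.

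The main obstacle is precisely this BDG-and-absorption step for \eqref{240806_01}: one has to be careful that the path-wise It\^o identity \eqref{4} is available for every $t\in[0,T]$ $\mathds{P}$-a.s., so that taking $\sup_t$ before $\erw$ is legitimate, and that the progressive measurability of $\Phi$ and the $L^2$-integrability of $\int_0^T\Vert \Phi\Vert_{HS}^2\,ds$ (guaranteed by the assumptions in Section \ref{Aotsan}) justify the use of BDG. The remaining estimates are essentially bookkeeping from \eqref{231019_06}.
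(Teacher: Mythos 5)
Your proposal is correct and follows essentially the same route as the paper: Gronwall applied to \eqref{231019_06} after discarding the non-negative terms gives the uniform bound on $\sup_{t}\erww{\Vert u_n(t)\Vert_2^2}$, the bounds \eqref{240806_02}--\eqref{240806_04} then follow from \eqref{231019_06} (with $(A2)$ and Poincar\'e for \eqref{240806_03}), and \eqref{240806_01} is obtained from the pathwise identity \eqref{4} via Burkholder's inequality with the standard absorption argument. The paper states this only in outline, so your write-up simply supplies the details it omits.
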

	\begin{proof}
		Discarding non-negative terms and applying Gronwall's lemma in \eqref{231019_06}, we get that 
		\[\sup_{t\in[0,T]}\erww{\Vert u_n(t)\Vert_2^2}\] 
		is uniformly bounded with respect to $n\in\mathbb{N}$. Then, \eqref{240806_02}-\eqref{240806_04} follow directly from \eqref{231019_06} and this bound. The stronger result \eqref{240806_01} may be obtained by using Burkholder's inequality.
	\end{proof}
	
	\subsection{Pathwise properties}\label{subsection-pathwise-esti}
	\begin{lemma}\label{220805_lem01}
		For $n\in\mathbb{N}$, let $u_n$ be the solution to \eqref{2}. There exists a full measure set $\widetilde{\Omega}\in\mathcal{F}$ such that for all $\omega\in\widetilde{\Omega}$:  for all $t\in [0,T]$,  we have $u_n(t)\leq u_m(t)$ a.e. in $D$ for any $m,n\in\mathbb{N}$ with $m\geq n$;  and $u_n\leq u_m$ q.e. in $Q_T$  for the same $n$ and $m$.
	\end{lemma}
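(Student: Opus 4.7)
The proof is a direct application of the comparison principle established in Lemma \ref{Lemma 1}, combined with a countable intersection of null sets to produce a single full-measure event that works for all pairs $(n,m)$ simultaneously.

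\textbf{Step 1: Compare the reaction terms.} For $n \leq m$ and $r\in\mathbb{R}$, observe that
\[f_n(r)-f_m(r) = (m-n)\,r^- \geq 0,\]
so $f_n \geq f_m$ pointwise. Moreover, each $f_n(r)=f(r)-nr^-$ is Lipschitz continuous (with constant $L_f+n$) and satisfies $f_n(0)=0$, so that the existence/uniqueness result invoked to produce $u_n$ via Remark \ref{Remark 110724_01} applies, and Lemma \ref{Lemma 1} is applicable to the pair $(u_n,u_m)$. Since both $u_n$ and $u_m$ start from the same initial datum $u_0$, the hypotheses $f_n\geq f_m$ and $u_0 \leq u_0$ hold in Lemma \ref{Lemma 1}.

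\textbf{Step 2: Apply the comparison principle for each fixed pair.} By Lemma \ref{Lemma 1}, for each pair $(n,m) \in \mathbb{N}^2$ with $n\leq m$, there is a full-measure event $\Omega_{n,m} \in \mathcal{F}$ such that, for every $\omega\in \Omega_{n,m}$:
\begin{itemize}
\item[$i.)$] $u_n(t,\omega) \leq u_m(t,\omega)$ a.e. in $D$, for every $t\in[0,T]$;
\item[$ii.)$] $u_n(\omega) \leq u_m(\omega)$ quasi everywhere in $Q_T$.
\end{itemize}

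\textbf{Step 3: Build a common full-measure set.} Since the index set $\{(n,m)\in\mathbb{N}^2 : n\leq m\}$ is countable, the event
\[\widetilde{\Omega} := \bigcap_{\substack{n,m\in\mathbb{N}\\ n\leq m}} \Omega_{n,m}\]
belongs to $\mathcal{F}$ and satisfies $\mathds{P}(\widetilde{\Omega})=1$. For every $\omega\in\widetilde{\Omega}$ and every pair $n\leq m$, both conclusions $i.)$ and $ii.)$ of Lemma \ref{Lemma 1} hold simultaneously, which is exactly the claim of Lemma \ref{220805_lem01}.

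\textbf{Potential obstacle.} The argument is essentially a book-keeping exercise: the only nontrivial point is that in Lemma \ref{Lemma 1} the exceptional null set a priori depends on the particular pair of solutions compared, so one has to exploit the countability of $\mathbb{N}^2$ to get a single $\omega$-set on which monotonicity in $n$ holds uniformly. There is no further analytic difficulty since the monotonicity of the penalization $r\mapsto -nr^-$ in $n$ is immediate.
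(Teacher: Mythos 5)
Your proof is correct and follows essentially the same route as the paper: apply the comparison principle of Lemma \ref{Lemma 1} to each pair $(u_n,u_m)$ with $n\leq m$, using that $f_n-f_m=(m-n)\,\mathrm{id}^-\geq 0$ and that both solutions share the initial datum $u_0$, and then intersect the countably many full-measure events. The explicit verification that $f_n\geq f_m$ and that each $f_n$ is Lipschitz with $f_n(0)=0$ is a welcome detail that the paper leaves implicit.
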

	\begin{proof}
		For fixed $m,n,\in\mathbb{N}$ such that $m\geq n$, let $\Omega_{m,n}$ be defined according to Lemma
		\ref{Lemma 1}, \textit{i.e.}, $\mathds{P}(\Omega_{m,n})=1$ and such that for any $\omega\in\Omega_{m,n}$ we have: for all $t \in [0,T]$, $u_n(t) \leq u_m(t)$ a.e. in $D$ and $u_n \leq u_m$ q.e. in $Q_T$. 
Let us set $\widetilde{\Omega}=\bigcap_{n\in\mathbb{N}}\bigcap_{m\geq n}\Omega_{m,n}$. Then, $\mathds{P}(\Omega_{m,n}^C)=\mathds{P}(\bigcup_{n\in\mathbb{N}}\bigcup_{m\geq n}\Omega_{m,n}^C)=0$ since $\mathds{P}(\Omega_{m,n}^C)=0$ for any $m,n \in \N$.
		Consequently, for any $\omega\in\widetilde{\Omega}$, for all $n,m\in\mathbb{N}$ with $m\geq n$ we have, for all $t\in[0,T]$, $u_n(t)\leq u_m(t)$ a.e. in $D$ and $u_n\leq u_m$ q.e. in $Q_T$.
	\end{proof}
\begin{lemma}\label{240806_lem1}
		There exists a subset of $\Omega$ of full measure, still denoted $\widetilde{\Omega}$, and  a predictable $u\in L^2(\Omega\times(0,T);L^2(D))$ such that, for any $\omega$ in $\widetilde{\Omega}$ and when $n\rightarrow\infty$ for a not relabeled subsequence,
		\begin{itemize}
			\item[$i.)$] 
			$u_n\rightarrow u$ in $L^2(\Omega;L^2(Q_T))$ \ and \ $u_n(\omega)\rightarrow u(\omega)$ in $L^2(Q_T)$.
			\item[$ii.)$] 
			There exists a quasi everywhere defined representative $\bar{u}(\omega)$ of $u(\omega)$ in $Q_T$ such that \linebreak $u_n(\omega,t,x)\rightarrow \bar{u}(\omega,t,x)$ non-decreasingly q.e. in $Q_T$
		\end{itemize}
	\end{lemma}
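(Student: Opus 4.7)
The plan is to use the pathwise monotonicity provided by Lemma \ref{220805_lem01} to construct a pointwise q.e.\ defined non-decreasing limit, and then promote this to strong $L^2$-convergence via the uniform estimates of Lemma \ref{231019_lem1}.

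First, I would fix $\omega \in \widetilde{\Omega}$ as in Lemma \ref{220805_lem01}. For each pair $m \geq n$ in $\mathbb{N}$, the inequality $u_n(\omega) \leq u_m(\omega)$ fails only on a quasi-exceptional subset of $Q_T$; by countable subadditivity of the parabolic capacity (see Section \ref{A}), the union of these sets over all $m\geq n$, $n \in \mathbb{N}$, is still quasi-exceptional. Outside this set, I define
\[
\bar{u}(\omega,t,x) := \lim_{n \to \infty} u_n(\omega,t,x) = \sup_{n\in\mathbb{N}} u_n(\omega,t,x) \in \mathbb{R} \cup \{+\infty\},
\]
which realises the monotone q.e.\ convergence demanded in part (ii).

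Next, to control $\bar{u}$, I would decompose $u_n = u_n^+ - u_n^-$, observing that q.e.\ monotonicity of $(u_n)$ transfers to q.e.\ monotonicity of $(u_n^+)$ (non-decreasing) and of $(u_n^-)$ (non-increasing). Estimate \eqref{240806_04} gives $\erww{\|u_n^-\|_{L^2(Q_T)}^2}\leq K_4/n$, so $u_n^- \to 0$ in $L^2(\Omega \times Q_T)$ and, along a subsequence, a.e.\ in $\Omega \times Q_T$; hence $\bar{u}\geq 0$ a.e. Estimate \eqref{240806_01} bounds $(u_n^+)$ uniformly in $L^2(\Omega \times Q_T)$, and monotone convergence yields $\bar u = \bar u^+\in L^2(\Omega\times Q_T)$. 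Setting $u := \bar{u}$ produces a process which, being a pointwise $L^2(D)$-limit of predictable processes, admits a predictable version; restricting $\widetilde{\Omega}$ further to the full-measure set where $u(\omega)\in L^2(Q_T)$, I may also assume pathwise integrability.

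For the $L^2$-convergences in (i), I would use the pathwise domination $0 \leq u - u_n \leq u - u_1$, valid q.e.\ (hence a.e.) in $Q_T$, together with $u - u_1 \in L^2(\Omega \times Q_T)$. Dominated convergence then yields $u_n \to u$ in $L^2(\Omega \times Q_T)$; applying the same reasoning $\omega$ by $\omega$ on $\widetilde{\Omega}$, with $(u(\omega) - u_1(\omega))^2 \in L^1(Q_T)$ as the dominating function, delivers the pathwise convergence $u_n(\omega) \to u(\omega)$ in $L^2(Q_T)$. The main obstacle I anticipate is bookkeeping: the quasi-exceptional sets genuinely depend on $\omega$ and are not a priori jointly measurable, so one must carefully combine the pathwise statements of Lemma \ref{220805_lem01} with the expectation-level bounds of Lemma \ref{231019_lem1} so that a single extracted subsequence simultaneously produces the $L^2(\Omega;L^2(Q_T))$ convergence, the pathwise $L^2(Q_T)$ convergence, and the q.e.\ non-decreasing pointwise limit $\bar{u}$. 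Once the order of extractions is fixed, the predictability of $u$ follows essentially for free.
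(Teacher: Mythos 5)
Your proposal is correct and takes essentially the same route as the paper: both construct $u$ as the monotone limit of $(u_n)$, use Lemma \ref{220805_lem01} to upgrade the monotone convergence to a q.e.\ statement pathwise, bound the limit in $L^2(\Omega\times Q_T)$ via Lemma \ref{231019_lem1}, and obtain the two convergences in $i.)$ by dominating $u-u_n$ (equivalently $|u_n|\leq |u_1|+|u|$) and applying Lebesgue's theorem. The only point to tidy is the ordering you yourself flag: the paper first defines $u:=\sup_n u_n$ as a $d\mathds{P}\otimes dt\otimes dx$-a.e.\ defined function, so that predictability is immediate as a countable supremum of predictable processes, and only afterwards identifies, for fixed $\omega$, the q.e.\ limit $\bar u(\omega)$ as a representative of $u(\omega)$ --- performing the construction in this order removes the joint-measurability concern attached to defining $u$ directly from the pathwise, $\omega$-dependent quasi-exceptional sets.
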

	\begin{proof}
		Repeating the arguments in the proof of Lemma \ref{Lemma 1} with $u=u_n$, $v=u_{n+1}$ and taking expectation in \eqref{220421_01}, we obtain $u_n\leq u_{n+1}$ $d\mathds{P}\otimes dt\otimes dx$-a.e. in $\Omega\times (0,T)\times D$. Therefore we may define the measurable function $u:\Omega\times [0,T]\times D\rightarrow\mathbb{R}$ by setting
		\[u(\omega,t,x)=\sup_{n\in\mathbb{N}}u_n(\omega,t,x) \]
		$d\mathds{P}\otimes dt\otimes dx$-a.e. in $\Omega\times (0,T)\times D$ and, e.g., equal to zero on the exceptional set. 
		\\
		Using Lemma \ref{231019_lem1}, \eqref{220822_01} and Fatou's lemma, it follows that $u\in L^2(\Omega\times (0,T)\times D)$. Then, since one has
		\[|u_n(\omega,t,x)|\leq |u_1(\omega,t,x)|+|u(\omega,t,x)|\]
		for all $n\in\mathbb{N}$, $d\mathds{P}\otimes dt\otimes dx$-a.e. in $\Omega\times (0,T)\times D$ one concludes by Lebesgue's dominated convergence theorem that $u_n$ converges to $u$ in $L^2(\Omega\times (0,T)\times D)$ and also in $L^2(\Omega;L^2(Q_T))$ and $L^2(\Omega\times (0,T);L^2(D))$ since $L^2(\Omega\times (0,T)\times D)$ is isomorphic to the latter (see, e.g., \cite[Proposition 1.2.24, p.25]{HNVW16}). 
		\\ Passing to a subsequence if necessary, the converse of Lebesgue's dominated convergence theorem yields the  convergence of $u_n(\omega)$ to $u(\omega)$ in $L^2(Q_T)$, $\omega-\mathds{P}$-a.s. in $\Omega$. 
		\\
		For the same subsequence, one may also consider that $u_n(\omega,t)$ converges to $u(\omega,t)$ in $L^2(D)$, $(\omega,t)-d\mathds{P}\otimes dt$-a.e. in $\Omega\times (0,T)$, with respect to the predictable $\sigma$-field $\mathcal{P}_T$. 
Therefore $i.)$  holds true.
		\\ 
		Now, from $i.)$ it follows that there exists $N\in\mathcal{F}$ with $\mathds{P}(N)=0$ such that, up to a subsequence, $u_n(\omega)\rightarrow u(\omega)$ in $L^2(Q_T)$ as $n\rightarrow\infty$ for all $\omega\in\Omega\setminus N$. 
		\\
		Choosing $\widetilde{\Omega}$ according to Lemma \ref{220805_lem01}, for all $\omega\in \bar{\Omega}:=(\widetilde{\Omega}^C\cup N)^C$ it follows that
		\begin{align*}
			u_n(\omega)\leq u_{n+1}(\omega) \quad \text{ q.e. in $Q_T$ and for all $n$.} 
		\end{align*}
		Setting for $(t,x)$ q.e. in $Q_T$
		\[\bar{u}(\omega):=\sup_{n\in\mathbb{N}}\operatorname{quasi \ ess}u_n(\omega,t,x)=\lim_{n\rightarrow\infty}u_n(\omega,t,x),\] 
		we find that $\bar{u}(\omega)=u(\omega)$ $dt\otimes dx$-a.e. in $Q_T$. In this manner, for any $\omega\in\bar{\Omega}$, $u(\omega)\in L^2(Q_T)$ admits a representative which is defined q.e. in $Q_T$ such that
		\[\bar{u}(\omega)=\sup_{n\in\mathbb{N}}\operatorname{quasi \ ess}u_n(\omega,t,x)=\lim_{n\rightarrow\infty}u_n(\omega)\]
		q.e. in $Q_T$.  
	\end{proof}	
	\begin{lemma}\label{110724_rem1}
		Let $u$ be introduced as in Lemma \ref{240806_lem1}. There exists a new subset of $\Omega$ of full measure, still denoted $\widetilde{\Omega}$, and a not relabeled subsequence if necessary, such that,  for all $(\omega,t)\in\widetilde{\Omega}\times [0,T]$,  
		\begin{align*}
			u(\omega,t)=\lim_{n\rightarrow\infty}u_n(\omega,t) \text{ in $L^2(D)$.}
		\end{align*}
	\end{lemma}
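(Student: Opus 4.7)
The plan is to upgrade the $(dt\otimes d\mathds{P})$-almost everywhere convergence supplied by Lemma \ref{240806_lem1} to genuine pointwise convergence in $t$, by exploiting the pathwise comparison $u_n(\omega,t)\leq u_{n+1}(\omega,t)$ a.e.\ in $D$ for \emph{every} $t\in[0,T]$ coming from Lemma \ref{220805_lem01}. The main obstacle is that the a priori estimate \eqref{240806_01} controls $\mathbb{E}\sup_{t}\Vert u_n(t)\Vert_2^2$ only for each fixed $n$; to conclude I would need a uniform pathwise bound $\sup_{n,t}\Vert u_n(\omega,t)\Vert_2<\infty$, and it is precisely the monotonicity combined with monotone convergence in $\omega$ that should deliver it.

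I would first split $u_n=u_n^+-u_n^-$ and use $u_n\leq u_{n+1}$ to note that, for each $\omega\in\widetilde{\Omega}$ and each $t\in[0,T]$, the sequence $(u_n^+(\omega,t,\cdot))_n$ is non-decreasing and $(u_n^-(\omega,t,\cdot))_n$ is non-increasing almost everywhere on $D$. Setting $X_n(\omega):=\sup_{t\in[0,T]}\Vert u_n^+(\omega,t)\Vert_2^2$, the sequence $(X_n)_n$ is itself non-decreasing in $n$ and, by \eqref{240806_01} together with $\Vert u_n^+(t)\Vert_2\leq\Vert u_n(t)\Vert_2$, satisfies $\mathbb{E}X_n\leq K_1$. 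Beppo Levi's monotone convergence theorem then yields $\mathbb{E}\sup_n X_n\leq K_1$, hence outside a $\mathds{P}$-null set $N_1$ we have $\sup_{n,t}\Vert u_n^+(\omega,t)\Vert_2<\infty$. For the negative parts, the monotonicity gives $u_n^-(\omega,t)\leq u_1^-(\omega,t)$ a.e.\ in $D$ for every $t$, and since $u_1\in\mathcal{C}([0,T];L^2(D))$ $\mathds{P}$-a.s., outside a $\mathds{P}$-null set $N_2$ one has $\sup_{n,t}\Vert u_n^-(\omega,t)\Vert_2\leq\sup_t\Vert u_1(\omega,t)\Vert_2<\infty$. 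Redefining $\widetilde{\Omega}$ as the intersection with $(N_1\cup N_2)^C$, the two halves of $u_n(\omega,t)$ are monotone in $n$ and uniformly bounded in $L^2(D)$ for every $(\omega,t)\in\widetilde{\Omega}\times[0,T]$.

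For each such $(\omega,t)$, monotone pointwise convergence a.e.\ in $D$ defines limits $u^{\ast,\pm}(\omega,t,x):=\lim_n u_n^{\pm}(\omega,t,x)$, and the monotone convergence theorem on $D$ gives $\Vert u^{\ast,\pm}(\omega,t)\Vert_2^2=\lim_n\Vert u_n^{\pm}(\omega,t)\Vert_2^2<\infty$, so $u^{\ast,\pm}(\omega,t)\in L^2(D)$. Dominated convergence, with dominating function $u^{\ast,+}(\omega,t)$ for the non-decreasing part and $u_1^-(\omega,t)$ for the non-increasing part, then yields $u_n^{\pm}(\omega,t)\to u^{\ast,\pm}(\omega,t)$ in $L^2(D)$, whence $u_n(\omega,t)\to u^{\ast}(\omega,t):=u^{\ast,+}(\omega,t)-u^{\ast,-}(\omega,t)$ in $L^2(D)$ for every $t\in[0,T]$ and every $\omega\in\widetilde{\Omega}$. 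It remains to identify $u^{\ast}$ with $u$: Lemma \ref{240806_lem1} already provides $u_n(\omega,t)\to u(\omega,t)$ in $L^2(D)$ for $(d\mathds{P}\otimes dt)$-a.e.\ $(\omega,t)$ along the selected subsequence, so $u^{\ast}(\omega,t)=u(\omega,t)$ in $L^2(D)$ a.e., and replacing $u$ by its representative $u^{\ast}$ on $\widetilde{\Omega}\times[0,T]$ (which does not alter the $L^2(\Omega\times(0,T);L^2(D))$-class) yields the stated pointwise convergence for all $t\in[0,T]$.
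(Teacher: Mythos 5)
Your proof is correct, but it follows a genuinely different route from the paper's. The paper first invokes the parabolic capacity of time slices (via \cite[Theorem 2.15]{DPP} and the extension of Lemma \ref{Lemma 261124_04}) to transfer the quasi-everywhere monotone convergence $u_n\to\bar u$ to a.e.-in-$D$ convergence on each slice $\{t\}\times D$; you instead use directly the for-all-$t$ comparison $u_n(t)\leq u_m(t)$ a.e.\ in $D$ supplied by Lemma \ref{220805_lem01}, which makes the slice-capacity result unnecessary for this step. For the uniform pathwise bound, the paper runs a Markov-inequality/liminf-of-sets argument that only produces an $\omega$-dependent subsequence with $\sup_t\Vert u_{n(\omega)}(\omega,t)\Vert_2^2\leq L(\omega)$, and then must pass through weak $L^2(D)$ convergence before upgrading to strong convergence via monotonicity and Lebesgue's theorem; your decomposition $u_n=u_n^+-u_n^-$ exploits that $\sup_t\Vert u_n^+(\omega,t)\Vert_2^2$ is itself non-decreasing in $n$, so Beppo Levi gives a bound on the \emph{full} sequence, after which monotone and dominated convergence on $D$ yield strong convergence directly. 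This is cleaner and slightly stronger. The one point worth a sentence in your write-up is the final identification: the lemma is later used with $u(\omega,t)$ meaning the quasi-everywhere representative $\bar u$ of Lemma \ref{240806_lem1} evaluated on the slice $\{t\}\times D$, and it is exactly the slice-capacity fact that guarantees $\bar u(\omega,t,\cdot)=\lim_n u_n(\omega,t,\cdot)$ a.e.\ in $D$, i.e.\ that your $u^{\ast}(\omega,t)$ and $\bar u(\omega,t)$ coincide in $L^2(D)$ for \emph{every} $t$ rather than merely for a.e.\ $t$. Your argument proves the statement as literally phrased (existence of some representative with the pointwise-in-$t$ convergence), but to keep consistency with the representative fixed earlier and used in Remark \ref{u_at_time-t} and Proposition \ref{cadlag}, you should still record that observation.
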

	\begin{proof}
		By \cite[Theorem 2.15]{DPP} and an extension of $u_n$ to the interval $(-T,2T)$ as in Lemma \ref{Lemma 261124_04} of the Appendix, for any $t \in [0,T]$,  $\widetilde{\capa}_p(\{t\}\times A)=0$ if and only if $A\subset D$ is Borel measurable with measure zero. Thus, for any $(\omega,t)\in \widetilde{\Omega}\times [0,T]$, $u_n(\omega,t) \to \overline{u}(\omega,t)$ a.e. in $D$ non-decreasingly, where $\overline{u}$ is the chosen representative of $u$ introduced in Lemma \ref{240806_lem1}. 
		Let us prove that this convergence holds also in $L^2(D)$.
		To this end we show that
		\[\mathds{P}\left[\bigcap_{L\geq 1}\bigcup_{N\in\mathbb{N}}\bigcap_{n\geq N} \left\{\omega\in\Omega  :  \sup_{t\in[0,T]} \Vert u_n(\omega,t,\cdot)\Vert_2^2>L\right\}\right]=0.\]
		Indeed: For any $L\geq 1$, $n\in\mathbb{N}$, we set $A_n^L:= \left\{\omega\in\Omega  :  \sup_{t\in[0,T]} \Vert u_n(\omega,t,\cdot)\Vert_2^2>L\right\}$. Recalling that
		\[\bigcap_{n\geq N}A_n^L\uparrow \liminf_{N\rightarrow\infty}A_N^L:=\bigcup_{N=1}\bigcap_{n\geq N}A_N^L\]
		we have
		\begin{align*}
			\mathds{P}\left[\bigcup_{N=1}\bigcap_{n\geq N}A_n^L\right]=\lim_{N\rightarrow\infty}\mathds{P}\left[\bigcap_{n\geq N}A_n^L\right]=\mathds{P}\left[\liminf_{N\rightarrow\infty} A_N^L\right]\leq \liminf_{N\rightarrow\infty}\mathds{P}\left[A_N^L\right].
		\end{align*}
		From Lemma \ref{231019_lem1}, \eqref{240806_01} and Markov's inequality it follows that $\mathds{P}\left[A_N^L\right]\leq \frac{K_1}{L}$ for all $N\in\mathbb{N}$, hence
		\[\mathds{P}\left[\bigcap_{L\geq 1}\bigcup_{N\in\mathbb{N}}\bigcap_{n\geq N}A_N^L\right]\leq \frac{K_1}{\widetilde L}\]
		for all $\widetilde L\geq 1$ and therefore the assertion follows. Consequently,
		\[\mathds{P}\left[\bigcup_{L\geq 1}\bigcap_{N\in\mathbb{N}}\bigcup_{n\geq N} \left\{\omega\in\Omega  :  \sup_{t\in[0,T]} \Vert u_n(\omega,t,\cdot)\Vert_2^2\leq L\right\}\right]=1.\]
		Hence there exists $N\in\mathcal{F}$ with $\mathds{P}(N)=0$ such that for any $\omega\in N^C$ and any $t \in [0,T]$, there exists $L(\omega)\geq 1$ and a subsequence $(n(\omega))_{n\in\mathbb{N}}$ such that
		\begin{align}\label{240807_01}
			\Vert u_{n(\omega)}(\omega,t)\Vert_2^2\leq \sup_{s\in[0,T]} \Vert u_{n(\omega)}(\omega,s)\Vert_2^2\leq L(\omega).
		\end{align}
		Choosing $\widehat{\Omega}:=\widetilde{\Omega}\cap N^C$, for any fixed $(\omega,t)\in \widehat{\Omega}\times [0,T]$, 
		from the fact that $u_n(\omega,t) \to \overline{u}(\omega,t)$ a.e. in $D$, one concludes that  $u_{n(\omega)}(\omega,t) \rightharpoonup \overline{u}(\omega,t)$ in $L^2(D)$.
		Then, remembering that the a.e. convergence of $u_n(\omega,t)$ to $\overline{u}(\omega,t)$ is non-decreasing, Lebesgue's theorem yields $u_{n}(\omega,t) \to \overline{u}(\omega,t)$ in $L^2(D)$.
		Then, the result holds by relabeling $\widehat{\Omega}$ by $\widetilde{\Omega}$ and since we consider that $\overline{u}$ is the representative we have chosen for $u$.
	\end{proof}
	\begin{remark}
		In the sequel, we will not distinguish between $u$ and its quasi everywhere defined representative $\overline{u}$ if it is not essential for the argumentation.
	\end{remark}
	\subsection{Convergence result}\label{Subsection-convergence}
	From the estimates of Lemma \ref{231019_lem1} we get also weak convergence and regularity results:
	
	\begin{lemma}\label{231019_lem2}
Let $u$ be defined according to Lemmas \ref{240806_lem1} and \ref{110724_rem1}. Then, 
		\[u\in L^2_w(\Omega;L^{\infty}(0,T;L^2(D)))\cap L^p(\Omega;L^p(0,T;W^{1,p}_0(D)))\] 
		and there exists $\Psi\in L^{p'}(\Omega;L^{p'}(0,T;L^{p'}(D)))$ predictable such that for a not relabeled subsequence of $(u_n)_n$, as $n\rightarrow\infty$,
		\begin{align}\label{220724_04}
			u_n \stackrel{\ast}{\rightharpoonup}  u \ \text{weak-}\ast \text{ in} \ L_w^2(\Omega;L^{\infty}(0,T;L^2(D))),
		\end{align}
		\begin{align}\label{231019_02}
			u_n\rightharpoonup u \ \text{weakly in}  \ L^p(\Omega;L^p(0,T;W^{1,p}_0(D))), 
		\end{align}
		\begin{align}\label{231019_04}
			a(x,u_n,\nabla u_n)\rightharpoonup \Psi \ \text{weakly in} \ L^{p'}(\Omega;L^{p'}(0,T;L^{p'}(D))),
		\end{align}
		\begin{align}\label{231019_07}
			u_n^-\rightarrow 0 \ \text{in} \ L^2(\Omega;L^2(0,T;L^2(D))),
		\end{align}
		and a.e. in $Q_T$, and 
		\begin{align}\label{231019_05}
			u_n\rightarrow u \ \text{in} \ L^2(\Omega;L^2(0,T;L^2(D))).
		\end{align}In particular, $u \geq 0$ $d\mathds{P}\otimes dt\otimes dx$-a.e. in $\Omega\times (0,T)\times D$.
	\end{lemma}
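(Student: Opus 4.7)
The plan is to assemble the various convergences directly from the a priori bounds of Lemma \ref{231019_lem1}, extract weak(-$\ast$) subsequential limits by compactness, and then identify these limits with $u$ (respectively with a new process $\Psi$) using the strong convergence already established in Lemma \ref{240806_lem1}.

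First, from \eqref{240806_02} the sequence $(u_n)_n$ is bounded in the reflexive Banach space $L^p(\Omega;L^p(0,T;W^{1,p}_0(D)))$, so there exists a subsequence and a limit $\tilde u$ such that $u_n\rightharpoonup \tilde u$ weakly there. Combined with the strong convergence $u_n\to u$ in $L^2(\Omega;L^2(Q_T))$ from Lemma \ref{240806_lem1}, uniqueness of limits in $\mathcal{D}'(\Omega\times Q_T)$ forces $\tilde u=u$, which also shows $u\in L^p(\Omega;L^p(0,T;W^{1,p}_0(D)))$. For the weak-$\ast$ statement, one notes that $L^2_w(\Omega;L^{\infty}(0,T;L^2(D)))$ is the topological dual of $L^2(\Omega;L^1(0,T;L^2(D)))$, so the uniform bound \eqref{240806_01} together with the Banach-Alaoglu theorem yields $u_n\stackrel{\ast}{\rightharpoonup} u$ along a further subsequence (the limit being $u$ by the same uniqueness argument), which in turn proves $u\in L^2_w(\Omega;L^{\infty}(0,T;L^2(D)))$.

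Next, bound \eqref{240806_03} shows that $(a(\cdot,u_n,\nabla u_n))_n$ is bounded in $L^{p'}(\Omega;L^{p'}(0,T;L^{p'}(D)))^d$. By reflexivity and Banach-Alaoglu, a further subsequence converges weakly to some $\Psi$ in that space. The predictability of $\Psi$ follows because the closed linear subspace of predictable processes is weakly closed, and each $a(\cdot,u_n,\nabla u_n)$ is predictable since $u_n$ is and $a$ is a Carathéodory function. The convergence $u_n\to u$ strongly in $L^2(\Omega;L^2(Q_T))$ is precisely Lemma \ref{240806_lem1}(i) and is recorded here for convenience.

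It remains to treat the negative parts. Estimate \eqref{240806_04} reads $\erwb{\int_0^T\Vert u_n^-(s)\Vert_2^2\,ds\bigr] \le K_4/n}$, hence $u_n^-\to 0$ strongly in $L^2(\Omega;L^2(0,T;L^2(D)))$, and a.e. in $\Omega\times Q_T$ along a further subsequence. Combining with the a.e. convergence $u_n\to u$ from Lemma \ref{240806_lem1} we obtain $u^-=0$ a.e., i.e.\ $u\ge 0$ $d\mathds{P}\otimes dt\otimes dx$-a.e. No step here is particularly delicate; the only point requiring a minute of care is the identification of the space $L^2_w(\Omega;L^\infty(0,T;L^2(D)))$ as a dual so that weak-$\ast$ compactness is applicable, and the fact that successive subsequence extractions can be organised so that a single subsequence realises all five convergences simultaneously.
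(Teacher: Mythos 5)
Your proposal is correct and follows essentially the same route as the paper: the paper's proof simply states that \eqref{231019_02}--\eqref{231019_07} are direct consequences of the uniform bounds in Lemma \ref{231019_lem1} and that \eqref{220724_04} follows from the boundedness of $(u_n)_n$ in $L^2(\Omega;\mathcal{C}([0,T];L^2(D)))$ together with the duality theorem cited from Edwards, which is exactly the Banach--Alaoglu/weak-compactness argument you spell out, with the limits identified via the strong convergence of Lemma \ref{240806_lem1}. Your additional remarks (predictability of $\Psi$ via weak closedness of the predictable subspace, organizing a single subsequence) are details the paper leaves implicit but do not change the argument.
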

	\begin{proof}
		The convergence results \eqref{231019_02}-\eqref{231019_07} are direct consequences of Lemma \ref{231019_lem1}. The convergence result \eqref{220724_04} is a consequence of the boundedness of $u_n$ in $L^2(\Omega; \mathcal{C}([0,T]; L^2(D)))$ and \cite[Theorem 8.20.3]{E95}. 
	\end{proof}
	\begin{lemma}\label{Lemma 220724_01}
		We have
		\begin{align*}
			\erw \sup\limits_{t \in [0,T]} \Vert u(t)\Vert_{L^2(D)}^2 < \infty.
		\end{align*}
		Especially, $\mathds{P}$-a.s. (in a set still denoted $\widetilde\Omega$)  we have
		\begin{align*}
			\sup\limits_{t \in [0,T]} \Vert u(t)\Vert_{L^2(D)}^2 < \infty.
		\end{align*}
	\end{lemma}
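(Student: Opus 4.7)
The plan is to combine the uniform bound from Lemma \ref{231019_lem1}, namely $\erww{\sup_{t\in[0,T]}\Vert u_n(t)\Vert_2^2}\leq K_1$ with $K_1$ independent of $n$, with the pointwise (in $t$) strong convergence $u_n(\omega,t)\to u(\omega,t)$ in $L^2(D)$ provided by Lemma \ref{110724_rem1} for all $(\omega,t)\in\widetilde{\Omega}\times[0,T]$. Once both are in hand, a standard lower semicontinuity / Fatou argument delivers both conclusions immediately.

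More precisely, I would first fix $\omega\in\widetilde{\Omega}$ and $t\in[0,T]$. Then Lemma \ref{110724_rem1} gives $\Vert u_n(\omega,t)\Vert_2\to\Vert u(\omega,t)\Vert_2$ along the extracted subsequence. For every $n$ and every $t$ one has the trivial bound $\Vert u_n(\omega,t)\Vert_2\leq \sup_{s\in[0,T]}\Vert u_n(\omega,s)\Vert_2$. Passing to the limit in $n$ (with $t$ fixed) and then taking the supremum in $t$ on the left yields
\begin{align*}
\sup_{t\in[0,T]}\Vert u(\omega,t)\Vert_2 \leq \liminf_{n\to\infty}\sup_{s\in[0,T]}\Vert u_n(\omega,s)\Vert_2,
\end{align*}
and squaring (both sides being non-negative) preserves the inequality.

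Next I would take expectation and invoke Fatou's lemma together with the uniform bound from Lemma \ref{231019_lem1}:
\begin{align*}
\erww{\sup_{t\in[0,T]}\Vert u(t)\Vert_2^2}
\leq \erww{\liminf_{n\to\infty}\sup_{s\in[0,T]}\Vert u_n(s)\Vert_2^2}
\leq \liminf_{n\to\infty}\erww{\sup_{s\in[0,T]}\Vert u_n(s)\Vert_2^2}\leq K_1.
\end{align*}
This proves the first assertion. The pathwise statement follows: since a non-negative random variable with finite expectation is $\mathds{P}$-a.s. finite, there exists a measurable subset of $\widetilde{\Omega}$ of full $\mathds{P}$-measure, which we again denote $\widetilde{\Omega}$, on which $\sup_{t\in[0,T]}\Vert u(t)\Vert_{L^2(D)}^2<\infty$.

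The only mild technical point worth checking is the measurability of $\omega\mapsto\sup_{t\in[0,T]}\Vert u(\omega,t)\Vert_2^2$, so that the expectation on the left-hand side is well-defined; this is not an obstacle because the same inequality $\sup_{t}\Vert u(\omega,t)\Vert_2^2\leq \liminf_n \sup_{s}\Vert u_n(\omega,s)\Vert_2^2$ bounds it above by a genuinely measurable random variable, so the a.s. conclusion can be extracted via the outer expectation if one wishes to avoid measurability concerns entirely. No further work is required.
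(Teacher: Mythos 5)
Your proof is correct and follows essentially the same route as the paper: both use the pointwise-in-$(\omega,t)$ convergence $\Vert u_n(\omega,t)\Vert_2^2 \to \Vert u(\omega,t)\Vert_2^2$ from Lemma \ref{110724_rem1}, the bound $\Vert u(\omega,t)\Vert_2^2 \leq \liminf_n \sup_s \Vert u_n(\omega,s)\Vert_2^2$, Fatou's lemma together with \eqref{240806_01}, and the fact that an integrable non-negative random variable is a.s.\ finite. No substantive difference.
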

\begin{proof}
			From Lemma \ref{110724_rem1}, for all $(\omega,t)\in\widetilde{\Omega}\times [0,T]$, 
			\begin{align*}
				\|u(\omega,t)\|^2_{L^2(D)}=\lim_{n\rightarrow\infty}\|u_n(\omega,t)\|^2_{L^2(D)} \leq \liminf_{n\rightarrow\infty}\Big[\sup_{s\in [0,T]}\|u_n(\omega,s)\|^2_{L^2(D)}\Big],  
			\end{align*}
			so that, \eqref{240806_01} and Fatou's lemma yield
			\begin{align*}
				\erww{\sup_{t\in [0,T]}\|u(\omega,t)\|^2_{L^2(D)}} \leq \liminf_{n\rightarrow\infty}\erww{\sup_{s\in [0,T]}\|u_n(\omega,s)\|^2_{L^2(D)}} \leq K_1.  
			\end{align*}
			Then, being integrable, the random variable $\sup\limits_{t\in[0,T]}\|u(\omega,t)\|^2_{L^2(D)}$ is a.s. finite. 
		\end{proof}
		\begin{remark}\label{u_at_time-t}
				From Lemma \ref{110724_rem1} it follows that the definition of $u(\omega)$ quasi everywhere yields the existence of $u(\omega,t)$ as a measurable function in $D$ for any $t\in [0,T]$. The monotonicity of the approximating sequence ensures it is in $L^2(D)$. Moreover, in the proof of this lemma, one notices that for any $t\in[0,T]$ and all $\omega$ in a subset of $\Omega$ of full measure,  $u_{n}(\omega,t) \to u(\omega,t)$ in $L^2(D)$. Since $|u_n(\omega,t)| \leq |u_1(\omega,t)|+|u(\omega,t)|$ a.e. in $D$, one gets that \[\|u_n(\omega,t)\|^2_{L^2(D)} \leq 2\left(\|u_1(\omega,t)\|^2_{L^2(D)}+\|u(\omega,t)\|^2_{L^2(D)}\right)\in L^1(\Omega)\] 
				and, for any $t\in [0,T]$, $u_{n}(t) \to u(t)$ in $L^2(\Omega\times D)$. 
			\end{remark}
		
	\subsection{Convergence to the reflection measure}
\begin{lemma}\label{lem-cv-reflected}
	There exists $\eta\in L^{p'}(\Omega;\mathcal{W}')$ such that, up to a not relabeled subsequence, 
	\[nu_n^-\rightharpoonup \eta\]
	weakly in $L^{p'}(\Omega;\mathcal{W}')$. 
\end{lemma}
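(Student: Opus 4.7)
The plan is to use equation \eqref{3} to express $n u_n^-$ as a bounded sequence in the reflexive Banach space $L^{p'}(\Omega;\mathcal{W}')$, and then to extract a weakly convergent subsequence by reflexivity. Setting $v_n := u_n - \int_0^\cdot \Phi \, dW$, equation \eqref{3} gives, $\mathds{P}$-a.s.,
\[
n u_n^- = \partial_t v_n - \operatorname{div} a(\cdot, u_n, \nabla u_n) + f(u_n) \qquad \text{in } L^{p'}(0,T;W^{-1,p'}(D)).
\]

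For any $\varphi \in \mathcal{W}$ I would integrate the time-derivative term by parts (which is legitimate since both $v_n$ and $\varphi$ belong to $\mathcal{W}$), producing
\[
\langle n u_n^-, \varphi \rangle_{\mathcal{W}',\mathcal{W}} = (v_n(T),\varphi(T))_{L^2(D)} - (u_0,\varphi(0))_{L^2(D)} - \int_0^T \langle v_n, \partial_t \varphi \rangle \, dt + \int_{Q_T} a(\cdot,u_n,\nabla u_n) \cdot \nabla\varphi \, dx \, dt + \int_{Q_T} f(u_n)\varphi \, dx \, dt.
\]
Each of the five terms on the right is to be estimated by $\|\varphi\|_{\mathcal{W}}$ times a quantity that is uniformly bounded in $L^{p'}(\Omega)$. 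The boundary-in-time pairings are controlled via the continuous embedding $\mathcal{W} \hookrightarrow \mathcal{C}([0,T];L^2(D))$ combined with \eqref{240806_01} and the $\mathcal{C}([0,T];L^2(D))$-regularity of the stochastic integral. The $\partial_t \varphi$ term and the $\nabla \varphi$ term are handled by H\"older's inequality together with \eqref{240806_02}, \eqref{240806_03}, and the $L^p(0,T;W^{1,p}_0(D))$-regularity of $\int_0^\cdot \Phi \, dW$. The $f(u_n)$ term is controlled via $|f(u_n)| \leq L_f |u_n|$ combined with \eqref{240806_01} and the continuous embedding $L^2(Q_T) \hookrightarrow L^{p'}(Q_T)$ (valid since $p\geq 2$ and $Q_T$ has finite measure). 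Raising to the $p'$-th power, taking expectation, and using Jensen's inequality to convert $L^2(\Omega)$ bounds into $L^{p'}(\Omega)$ bounds (since $p'\leq 2$), one arrives at $\mathds{E}\|n u_n^-\|_{\mathcal{W}'}^{p'} \leq C$ uniformly in $n$.

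Since $\mathcal{W}$ is separable and reflexive, so is $\mathcal{W}'$, and therefore $L^{p'}(\Omega;\mathcal{W}')$ is reflexive. By the Eberlein--\v{S}mulian theorem a (not relabeled) subsequence of $(n u_n^-)_n$ converges weakly in $L^{p'}(\Omega;\mathcal{W}')$ to some $\eta$, which is the claimed limit.

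The only mildly delicate point is the identification of $\partial_t v_n$ as a functional on $\mathcal{W}$: the integration by parts must be performed carefully so that all boundary-in-time contributions are controlled uniformly in $n$. This is precisely where the deterministic initial datum $v_n(0) = u_0 \in L^2(\Omega \times D)$, the pathwise $\mathcal{C}([0,T];L^2(D))$-regularity of $v_n$, and the uniform bound $\mathds{E}\sup_t\|u_n(t)\|_2^2 \leq K_1$ enter the argument.
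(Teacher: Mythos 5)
Your proposal is correct and follows essentially the same route as the paper: both express $nu_n^-$ through the penalized equation, integrate by parts in time against elements of $\mathcal{W}$ (the paper does this first for smooth test functions and then invokes density of $\mathcal{C}^\infty_c([0,T]\times D)$ in $\mathcal{W}$), estimate each resulting term via the embedding $\mathcal{W}\hookrightarrow \mathcal{C}([0,T];L^2(D))$, H\"older's inequality, the growth condition $(A2)$ and the uniform bounds of Lemma \ref{231019_lem1}, and then extract a weakly convergent subsequence in the reflexive space $L^{p'}(\Omega;\mathcal{W}')$. The only content of the paper's proof you omit is the additional identification of $\eta(\omega)$ as a non-negative Radon measure, which is not part of the statement as quoted.
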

\begin{proof}	
	From \eqref{2},  after integrating by parts it follows that, for any $\xi \in \mathcal{C}_c^{\infty}([0,T]\times D)$, we have
	\begin{align}\label{231019_08}
		\begin{aligned}
			&\int_0^T\int_D nu_n^{-}\xi\,dx\,dt  
			\\ =  &
			\int_D \left(u_n(T) - \int_0^T \Phi \, dW \right)\xi(T) - u_0 \xi(0) \, dx 
			\\&
			- \int_0^T\int_D \left(u_n-\int_0^{\cdot} \Phi\,dW\right)\partial_t \xi\,dx\,dt + \int_0^T\int_D f(u_n)\xi  \,dx\,dt
			\\&
			+\int_0^T\int_D a(x, u_n,\nabla u_n) \cdot\nabla \xi\,dx\,dt .
		\end{aligned}
	\end{align}
	Since $\mathcal{C}_c^{\infty}([0,T]\times D)$ is dense in $\mathcal{W}$ (see Section \ref{A}: Appendix), \eqref{231019_08} holds also true for any $\xi\in \mathcal{W}$ and 
	\[\langle nu_n^-,\xi\rangle_{\mathcal{W'},\mathcal{W}}=\int_0^T\int_D nu_n^{-}\xi\,dx\,dt.\]
	Using H\"older inequality, from \eqref{231019_08} it follows that
	\begin{align}\label{240809_01}
		\begin{aligned}
			&\langle nu_n^-,\xi\rangle_{\mathcal{W'},\mathcal{W}}\\
			&\leq \left(\|u_n(T)\|_{2} + \left\|\int_0^T \Phi \, dW\right\|_2 \right)\|\xi(T)\|_{2} + \|u_0\|_2 \|\xi(0)\|_2 \\
			&+  \left(\|u_n\|_{L^p(0,T;W_0^{1,p}(D))}+\left\|\int_0^{\cdot} \Phi\,dW\right\|_{L^p(0,T;W_0^{1,p}(D))}\right)\|\partial_t \xi\|_{L^{p'}(0,T;W^{-1,p'}(D))}\\
			& + L_f\|u_n\|_{L^2(Q_T)}\|\xi\|_{L^2(Q_T)} +\|a(x, u_n,\nabla u_n)\|_{L^{p'}(0,T;L^{p'}(D)^d)} \| \xi\|_{L^p(0,T;W^{1,p}_0(D))}\\
			&\leq  C_1\left(\|u_n\|_{\mathcal{C}([0,T];L^2(D))} + \left\|\int_0^\cdot \Phi \, dW\right\|_{\mathcal{C}([0,T];L^2(D))}  + \|u_0\|_2\right) \|\xi\|_{\mathcal{C}([0,T];L^2(D))} \\
			&
			+ \left(\|u_n\|_{L^p(0,T;W_0^{1,p}(D))}+\left\|\int_0^{\cdot} \Phi\,dW\right\|_{L^p(0,T;W_0^{1,p}(D))}\right)\|\partial_t \xi\|_{L^{p'}(0,T;W^{-1,p'}(D))} \\
			& + L_f\|u_n\|_{L^2(Q_T)}\|\xi\|_{L^2(Q_T)}  +\|a(x, u_n,\nabla u_n)\|_{L^{p'}(0,T;L^{p'}(D)^d)}
			\|\xi\|_{L^p(0,T;W^{1,p}_0(D))}
		\end{aligned}
	\end{align}
	for a constant $C_1\geq 0$. Since $\mathcal{W}$ is continuously embedded in $\mathcal{C}([0,T];L^2(D))$ (see Section \ref{A}: Appendix), from \eqref{240809_01} it follows that
	\begin{align}\label{240809_02}
		\begin{aligned}
			&\|nu_n^{-}\|_{\mathcal{W}^\prime}\\  
			&\leq C_2\left( \|u_n\|_{\mathcal{C}([0,T];L^2(D))} + \left\|\int_0^\cdot \Phi \, dW\right\|_{\mathcal{C}([0,T];L^2(D))}  + \|u_0\|_2\right)
			+ \|u_n\|_{L^p(0,T;W_0^{1,p}(D))}\\
			&+\left\|\int_0^{\cdot} \Phi\,dW\right\|_{L^p(0,T;W_0^{1,p}(D))} +\|a(x, u_n,\nabla u_n)\|_{L^{p'}(0,T;L^{p'}(D)^d)}.
		\end{aligned}
	\end{align}
	Now, the last two terms in \eqref{240809_02} deserve our attention. Using $(A2)$ and Poincar\'e inequality, we find a constant $C_3\geq 0$ such that
	\begin{align}\label{240809_04}
		\Vert a(x, u_n,\nabla u_n)\Vert_{L^{p'}(0,T;L^{p'}(D)^d)}\leq C_3 (\Vert u_n\Vert^p_{L^p(0,T;W_0^{1,p}(D))}+\Vert g\Vert^{p'}_{L^{p'}(D)})^{1/p'}
	\end{align}
	since $p\geq 2$, there exists constants $C_5\geq 0$ and $C_6\geq 0$ such that
	\begin{align}\label{240809_03}
		\begin{aligned}
			&\|nu_n^{-}\|^{p'}_{\mathcal{W}^\prime}\\
			&\leq C_5 \left(\|u_n\|^{p'}_{\mathcal{C}([0,T];L^2(D))} + \left\|\int_0^\cdot \Phi \, dW\right\|^{p'}_{\mathcal{C}([0,T];L^2(D))}  + \|u_0\|^{p'}_2+\|u_n\|^{p'}_{L^p(0,T;W_0^{1,p}(D))}\right)\\
			&+C_5\left(\left\|\int_0^{\cdot} \Phi\,dW\right\|^{p'}_{L^p(0,T;W_0^{1,p}(D))}+\|u_n\|^{p}_{L^p(0,T;W_0^{1,p}(D))}
			+\Vert g\Vert_{p'}^{p'}\right)\\
			&\leq C_6\left(\|u_n\|^{2}_{\mathcal{C}([0,T];L^2(D))} + \left\|\int_0^\cdot \Phi \, dW\right\|^2_{\mathcal{C}([0,T];L^2(D))}  + \|u_0\|^{2}_2+\|u_n\|^{p}_{L^p(0,T;W_0^{1,p}(D))}\right)\\
			&+ C_6\left(\left\|\int_0^{\cdot} \Phi\,dW\right\|^{p'}_{L^p(0,T;W_0^{1,p}(D))}+\Vert g\Vert_{p'}^{p'}+1\right).
		\end{aligned}
	\end{align}

	Taking expectation in \eqref{240809_03}, using Lemma \ref{231019_lem1} and the assumptions on the noise term, it follows that $(nu_n^-)_{n\in\mathbb{N}}$ is bounded in $L^{p'}(\Omega,\mathcal{W}')$. Therefore we find the weak convergence (up to a subsequence) of  $(nu_n^-)_{n\in\mathbb{N}}$ towards an element $\eta$ weakly in $L^{p'}(\Omega,\mathcal{W}')$. Since $nu_n^{-}\geq 0$ $d\mathds{P}\otimes dt\otimes dx$-a.e. in $\Omega\times Q_T$, for any $\xi\in \mathcal{W}^+:=\{\xi\in\mathcal{W} : \xi\geq 0\}$ and all $A\in\mathcal{F}$ we have
	\[0\leq \lim_{n\rightarrow\infty} \int_A\int_{Q_T}nu_n^-\xi \, d(t,x)\,d\mathds{P}=\int_A\langle \eta,\xi\rangle_{\mathcal{W'},\mathcal{W}}\,d\mathds{P}\] 
	hence $\langle \eta,\xi\rangle_{\mathcal{W'},\mathcal{W}}\geq 0$ $\mathds{P}$-a.s in $\Omega$, where the exceptional set may depend on $\xi$. Since $\mathcal{W}$ is separable, there exists a countable dense subset $C\subset \mathcal{W}$, and $N\in\mathcal{F}$ with $\mathds{P}(N)=0$ such that for all $\omega\in\dot{\Omega}:=\Omega\setminus N$, for all $\xi\in C\cap \mathcal{W}^+$ we have 
	\[\langle \eta(\omega),\xi\rangle_{\mathcal{W'},\mathcal{W}}\geq 0.\] 
	By continuity of $\xi\mapsto \langle \eta(\omega),\xi\rangle_{\mathcal{W'},\mathcal{W}}$, this inequality may be extended to all $\xi\in\mathcal{W}^+$, and in particular for all $\xi\in \mathcal{D}(Q_T)$ such that $\xi\geq 0$. Now, from \cite[Theorem 2.1.7]{H03}), it follows that $\eta(\omega)$ is a non-negative Radon measure on $Q_T$ for all $\omega\in\dot{\Omega}$.
	
\end{proof}
	
		For $A\in\mathcal{F}$, multiplying \eqref{231019_08} with $\mathds{1}_A$ and integrating over $\Omega$, we arrive, thanks to the convergence results of Lemma \ref{231019_lem2} and Lemma \ref{110724_rem1}, at
		\begin{align}\label{220724_03}
			\begin{aligned}
				&\int_A\int_D \left(u(T) - \int_0^T \Phi \, dW \right)\xi(T) - u_0 \xi(0) \, dx \, d\mathds{P} \\
				&-\int_A\int_0^T\int_D \left(u-\int_0^{\cdot} \Phi\,dW\right)\partial_t\xi\,dx\,dt\,d\mathds{P} +\int_A\int_0^T\int_D f(u)\xi \, dx\,dt\,d\mathds{P} \\
				&+\int_A\int_0^T\int_D \Psi \cdot\nabla \xi\,dx\,dt\,d\mathds{P} \\
				&=\int_A\langle\eta ,\xi\rangle_{\mathcal{W}', \mathcal{W}}\,d\mathds{P},
			\end{aligned}
		\end{align}
		and therefore, thanks again to an argument of separability as above,  $\mathds{P}$-a.s. in $\Omega$
		\begin{align}\label{5}
			\partial_t (u - \int_0^{\cdot} \Phi \, dW) - \diver \Psi + f(u) = \eta
		\end{align}
		in $\mathcal{W}'$.
		Let us remark that $\mathcal{D}(Q_T) \subset \mathcal{W}$, hence equation \eqref{5} is also satisfied in $\mathcal{D}'(Q_T)$ and $\eta$ is a random element in $\mathcal{D}'(Q_T)$.
				
		\begin{lemma}\label{Lemma 2}
			We have
			\begin{align*}
				\lim\limits_{n \to \infty} \mathbb{E} \sup\limits_{t \in [0,T]} \bigg| \int_0^t (u_n - u, \Phi \, dW)_2 \bigg| =0.
			\end{align*}
Especially, there exist a not relabeled subsequence,  $0\leq \frak{L} \in L^1(\Omega)$  and 
a set of full measure still denoted $\widetilde\Omega$ such that, for any $\omega \in \widetilde{\Omega}$, 
			\begin{align}\label{limit}
			\lim\limits_{n \to \infty} \sup_{t\in[0,T]} \left |\int_0^t (u_n-u, \Phi \, dW)_2 \right|=0
			\end{align}
				and, for this subsequence,
			\begin{align}\label{majoration}
				\sup_{t \in [0,T]}  \big| \int_0^t (u_n, \Phi \, dW)_2 \big| \leq \frak{L}.
			\end{align}
			for all  $n \in \N$.
		\end{lemma}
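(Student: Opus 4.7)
My plan is to view $M^n_t := \int_0^t(u_n - u, \Phi\,dW)_2$ as a continuous real-valued $(\mathcal{F}_t)$-martingale and apply the Burkholder-Davis-Gundy inequality $\mathbb{E}\sup_{t\in[0,T]}|M^n_t| \leq C\,\mathbb{E}\langle M^n\rangle_T^{1/2}$ to reduce the first assertion to an estimate on the quadratic variation. Writing $W_t = \sum_k\sqrt{\lambda_k}e_k\beta^k_t$ in the eigenbasis and applying Cauchy-Schwarz inside the resulting series gives the crude but serviceable bound
\[\langle M^n\rangle_T \;\leq\; \int_0^T \|u_n - u\|_2^{\,2}\,\|\Phi(s)\|_{HS(L^2;L^2)}^{\,2}\,ds.\]

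To pass this quantity to zero in expectation, I would exploit the monotonicity $u_1\leq u_n\leq u$ of Lemma \ref{220805_lem01}: combined with $u\geq 0$ it produces the $n$-independent pathwise envelope $\|u_n-u\|_2^2\leq 2(\|u\|_2^2+\|u_1\|_2^2)$, and the sequence $\|u_n-u\|_2^2$ is itself non-increasing in $n$, converging to zero $d\mathds{P}\otimes dt$-a.e.\ along the subsequence of Lemmas \ref{240806_lem1}-\ref{110724_rem1}. Monotone (or dominated) convergence in the $t$-integral, combined with the moment bounds $\mathbb{E}\sup_t\|u\|_2^2\leq K_1$ (Lemma \ref{Lemma 220724_01}) and $\mathbb{E}\sup_t\|u_1\|_2^2\leq K_1$ (Lemma \ref{231019_lem1}) together with the integrability assumption on $\Phi$, yields $\mathbb{E}\langle M^n\rangle_T\to 0$. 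Cauchy-Schwarz then gives $\mathbb{E}\langle M^n\rangle_T^{1/2}\to 0$ and BDG closes the first claim. For the pathwise part I would extract a further subsequence such that $\mathbb{E}\sup_t|M^n_t|\leq 2^{-n}$, so that $\Sigma := \sum_n \sup_t|M^n_t| \in L^1(\Omega)$ by Fubini-Tonelli; in particular $\Sigma<\infty$ and $\sup_t|M^n_t|\to 0$ on a full-measure set, still denoted $\widetilde\Omega$. The dominating function is then defined by
\[\frak{L}(\omega) \;:=\; \Sigma(\omega) \,+\, \sup_{t\in[0,T]}\Big|\int_0^t(u,\Phi\,dW)_2\Big|(\omega),\]
which majorizes $\sup_t|\int_0^t(u_n,\Phi\,dW)_2|$ via the triangle inequality, and whose second summand is in $L^1(\Omega)$ by a direct BDG application using $u\in L_w^2(\Omega;L^\infty(0,T;L^2(D)))$ from Lemma \ref{231019_lem2}.

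The principal subtlety is the integrability check for the envelope $(\|u\|_2^2 + \|u_1\|_2^2)\,\|\Phi\|_{HS}^2$ on $\Omega\times(0,T)$: it couples the $L^2_\omega$-control of $\sup_t\|u_n\|_2$ against the $L^p$-integrability of $\Phi$ with $p\geq 2$, and requires a careful H\"older pairing in $\omega$. What makes the argument go through cleanly for the full range of admissible $p$ is precisely the monotonicity of Lemma \ref{220805_lem01}, which furnishes a single dominator independent of $n$ and thereby decouples the integrability question from the convergence itself.
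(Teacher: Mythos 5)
Your overall architecture (BDG reduction, the monotone envelope $|u_n|\leq|u_1|+|u|$ from Lemma \ref{220805_lem01}, the $2^{-n}$-subsequence extraction and the definition of $\frak{L}$) matches the paper's, but there is a genuine gap at the central convergence step: you propose to prove $\erww{\langle M^n\rangle_T}\to 0$ and only then take square roots via Jensen. The quantity $\erww{\langle M^n\rangle_T}=\erw\int_0^T\Vert u_n-u\Vert_2^2\,\Vert\Phi\Vert_{HS}^2\,dt$ is controlled by $\erww{\mathrm{env}\cdot\int_0^T\Vert\Phi\Vert_{HS}^2\,dt}$ with $\mathrm{env}=2(\sup_t\Vert u\Vert_2^2+\sup_t\Vert u_1\Vert_2^2)$, and this is a \emph{product} of two random variables each of which is only known to lie in $L^1(\Omega)$ (for $p=2$; for general $p\geq 2$ one has $\int_0^T\Vert\Phi\Vert_{HS}^2\,dt\in L^{p/2}(\Omega)$, whose H\"older conjugate would require $\sup_t\Vert u\Vert_2\in L^{2p/(p-2)}(\Omega)$, a moment that is not available). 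The ``careful H\"older pairing in $\omega$'' you flag as the principal subtlety does not in fact close: $\erww{\langle M^n\rangle_T}$ need not even be finite under the standing assumptions, so the route through the expectation of the quadratic variation fails.

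The paper's proof avoids this by never forming $\erww{\langle M^n\rangle_T}$. It first applies dominated convergence \emph{pathwise in $t$} for fixed $\omega$ (the envelope is $t$-independent and $\int_0^T\Vert\Phi\Vert_{HS}^2\,dt<\infty$ a.s.), obtaining $\langle M^n\rangle_T\to 0$ $\mathds{P}$-a.s.; it then dominates the \emph{square root} by Cauchy--Schwarz in $t$ followed by Young's inequality,
\begin{align*}
\langle M^n\rangle_T^{1/2}\leq \mathrm{env}^{1/2}\Big(\int_0^T\Vert\Phi\Vert_{HS}^2\,dt\Big)^{1/2}\leq \tfrac12\,\mathrm{env}+\tfrac12\int_0^T\Vert\Phi\Vert_{HS}^2\,dt\in L^1(\Omega),
\end{align*}
which is a \emph{sum} of integrable random variables, and concludes $\erww{\langle M^n\rangle_T^{1/2}}\to 0$ by dominated convergence in $\omega$. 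You should replace your ``$\erww{\langle M^n\rangle_T}\to 0$ then Cauchy--Schwarz'' step by this two-stage argument; the same Young-inequality device is also what makes your $L^1(\Omega)$ bound on $\sup_t|\int_0^t(u,\Phi\,dW)_2|$ in the definition of $\frak{L}$ legitimate. The remainder of your pathwise argument is then fine.
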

		\begin{proof}
			According to Burkholder's inequality, we have
			\begin{align*}
				\mathbb{E} \Big[\sup\limits_{t \in [0,T]} \bigg| \int_0^t (u_n - u, \Phi \, dW)_2 \bigg|\Big] \leq 3\mathbb{E} \Big[\bigg( \int_0^T \Vert u_n - u \Vert_2^2 \cdot \Vert \Phi \Vert_{HS(L^2(D))}^2 \, dt \bigg)^{\frac{1}{2}}\Big].
			\end{align*}
			By Lemma \ref{110724_rem1}, 
			\begin{align*}
			\lim_{n\to\infty}	\Vert u_n(t) - u(t) \Vert_2^2 \cdot \Vert \Phi(t) \Vert_{HS(L^2(D))}^2 =0
			\end{align*}
			and, there exists a constant $C\geq 0$ such that, $\mathds{P}$-a.s.,   
			\begin{align*}
				&\Vert u_n(t) - u(t) \Vert_2^2
				\leq 2 \left(\Vert u_n(t)\|^2_2 + \|u(t)\|_2^2\right)
				\leq C \left( \Vert u_1(t)\|^2_2 + \|u(t)\|_2^2\right) \\
				&\leq C \left(\sup_{t\in [0,T]}\Vert u_1(t)\|^2_2 + \sup_{t\in [0,T]}\|u(t)\|_2^2\right) <+\infty.
			\end{align*}
			Then, Lebesgue's theorem yields, $\mathds{P}$-a.s., 
			\[\lim_{n\to\infty} \int_0^T \Vert u_n - u \Vert_2^2 \cdot \Vert \Phi \Vert_{HS(L^2(D))}^2 \, dt =0.\] 
			 Moreover, H\"{o}lder's and Young's inequality yield
			\begin{align*}
				& \left(\int_0^T \Vert u_n - u \Vert_2^2 \cdot \Vert \Phi \Vert_{HS(L^2(D))}^2 \, dt\right)^{1/2} \\
				&\leq  \left(C\left(\sup_{t\in [0,T]}\Vert u_1(t)\|_2^2 + \sup_{t\in [0,T]}\|u(t)\|_2^2\right)\right)^{1/2}
				\left(\int_0^T  \Vert \Phi \Vert_{HS(L^2(D))}^2 \, dt\right)^{1/2}
				\\ &\leq 
				C \left( \sup_{t\in [0,T]}\Vert u_1(t)\|^2_2 + \sup_{t\in [0,T]}\|u(t)\|^2_2\right)   + 
				\int_0^T  \Vert \Phi \Vert_{HS(L^2(D))}^2 \, dt \in L^1(\Omega).
			\end{align*}
			Again, Lebesgue's theorem yields 
			\[\lim_{n\rightarrow\infty}\erww{\sup\limits_{t \in [0,T]} \bigg| \dint_0^t (u_n - u, \Phi \, dW)_2 \bigg|}=0.\]
		\end{proof}
		
		\begin{remark}\label{Remark 120724_01}
			Considering  $\frak{L}$ given in \eqref{majoration}, denote by 
			\begin{align*}
				\frak{M}=\frac{1}{2} \int_D u_0^2 \, dx + \frak{L} + \frac{1}{2}\int_0^T \Vert \Phi(s)\Vert^2_{\operatorname{HS}} \,dt \in L^1(\Omega).
			\end{align*}
			Lemma \ref{Lemma 2}, Assumptions $(A1)$-$(A3)$ and equality \eqref{4} yield that, $\mathds{P}$-a.s., the boundedness results in Lemma \ref{231019_lem1} are still valid without taking expectation: 
			there exists $$\frak{K}=\int_D\vert\kappa(x)\vert	dx+ \frak{M}\in L^1(\Omega),$$ not depending on $n \in \N$, such that, for any $\omega$ in an updated $\widetilde{\Omega}$ of full measure in $\Omega$, 
			\begin{align*}
				\sup_{t\in[0,T]}\Vert u_n(t)\Vert_2^2  
				+ \Vert \nabla u_n\Vert_{L^p(Q_T)}^p
				+ \Vert a(\cdot,u_n,\nabla u_n)\Vert_{L^{p'}(Q_T)}^{p'}
				+ \Vert \sqrt{n} u_n^{-}\Vert_{L^{2}(Q_T)}^{2}
				\leq \frak{K}<+\infty.
			\end{align*}
		\end{remark}

\section{$L^1$-estimates and convergence of the gradients}\label{section-L1-estimate}\label{s4}

In the following, we fix the function $\varphi:\overline{D} \to \mathbb{R}$, $\varphi(x):= \min (1, \operatorname{dist}(x, \partial D))$. Then we have $\varphi \in \mathcal{C}_0(\overline{D})\cap W_0^{1,p}(D)$, $0 < \varphi\leq 1$ in $D$ and $|\nabla \varphi| \leq 1$ in $D$. This function can easily be extended to an element of $\mathcal{W}$ and to a measurable function $\tilde{\varphi}: \Omega \times [0,T] \times \overline{D} \to \R$ defined by $\tilde{\varphi}(\omega, t, x):= \varphi(x)$ which will still be denoted by $\varphi$ in the following. Note that for any $v \in \mathcal{W}$, $v\varphi \in \mathcal{W}$ also, since $v\varphi \in L^p(Q_T)$ and
\begin{align*}
&\nabla (v\varphi) = v \nabla \varphi + \varphi \nabla v \in L^p(Q_T)
\\
&\partial_t (v \varphi) = \varphi \partial_t v \in L^{p'}(0,T;W^{-1,p'}(D))\text{ since, }\forall \psi \in W^{1,p}_0(D),\ \langle \partial_t (v \varphi) , \psi \rangle = \langle \partial_t v , \varphi \psi \rangle.
\end{align*}
\begin{lemma}\label{Lemma 230224_01}
The sequence $(n u_n^- \varphi)_n$ is bounded in $L^1(\Omega \times Q_T)$ and there exists $\frak{K_2}\in L^1(\Omega)$ such that $\mathds{P}$-a.s. $\|n u_n^- \varphi\|_{L^1(Q_T)} \leq \frak{K_2}$.
\end{lemma}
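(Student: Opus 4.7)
The plan is to test the penalized equation \eqref{3} with the time-independent cut-off $\varphi$ to isolate the penalty term $nu_n^-$. Since $\varphi \in W^{1,p}_0(D)$ with $0 \leq \varphi \leq 1$ and $|\nabla \varphi| \leq 1$, when viewed as a function on $Q_T$ it satisfies $\partial_t \varphi = 0$ and thus $\varphi \in \mathcal{W}$. As $u_n - \int_0^{\cdot}\Phi\,dW \in \mathcal{W}$ by Remark \ref{241211_rem01}, the integration-by-parts formula in time of Section \ref{A}: Appendix applies exactly as in \eqref{231019_08}, and one obtains, $\mathds{P}$-a.s.,
\begin{align*}
\int_0^T\!\!\int_D nu_n^-\varphi\,dx\,dt = & \int_D\!\Bigl(u_n(T) - \int_0^T \!\!\Phi\,dW\Bigr)\varphi\,dx - \int_D u_0\varphi\,dx \\
& + \int_0^T\!\!\int_D a(\cdot,u_n,\nabla u_n)\cdot\nabla\varphi\,dx\,dt + \int_0^T\!\!\int_D f(u_n)\varphi\,dx\,dt.
\end{align*}

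Since the integrand on the left-hand side is non-negative, it suffices to bound the right-hand side uniformly in $n$ by an element of $L^1(\Omega)$. Using H\"older's inequality together with $\|\varphi\|_\infty \leq 1$ and $\|\nabla \varphi\|_\infty \leq 1$, the first three terms are controlled by $|D|^{1/2}$ times $\sup_{t\in[0,T]}\|u_n(t)\|_2$, $\|u_0\|_2$, and $\|\int_0^T \Phi\,dW\|_2$ respectively. The fourth term is bounded by $(T|D|)^{1/p}\|a(\cdot,u_n,\nabla u_n)\|_{L^{p'}(Q_T)}$, and the convection term by $L_f T|D|^{1/2}\sup_{t}\|u_n(t)\|_2$ thanks to the Lipschitz property of $f$ with $f(0)=0$. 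By the pathwise uniform bound $\mathfrak{K} \in L^1(\Omega)$ of Remark \ref{Remark 120724_01}, each of these quantities is bounded a.s. by an $n$-independent multiple of a power of $\mathfrak{K}$ or of other fixed $L^1(\Omega)$ random variables (like $\|u_0\|_2^2$ and $\|\int_0^T\Phi\,dW\|_2^2$, which are in $L^1(\Omega)$ by our assumptions on $u_0$ and $\Phi$). Setting $\mathfrak{K_2}$ to be the sum of these bounds gives the pathwise estimate $\|nu_n^-\varphi\|_{L^1(Q_T)} \leq \mathfrak{K_2}$, and taking expectation yields the $L^1(\Omega\times Q_T)$-bound.

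There is no serious obstacle; the only point deserving attention is the legitimacy of testing the equation, which holds in $L^{p'}(0,T;W^{-1,p'}(D))$, against the time-independent function $\varphi$. This is justified because $\varphi$ indeed belongs to $\mathcal{W}$ (a fact observed in the paragraph preceding the lemma), so that the duality pairing $\langle \partial_t(u_n - \int_0^\cdot \Phi\,dW), \varphi\rangle_{\mathcal{W}',\mathcal{W}}$ makes sense and reduces, by the chain rule in $\mathcal{W}$, to the boundary contribution $\int_D (u_n(T) - \int_0^T \Phi\,dW)\varphi\,dx - \int_D u_0 \varphi\,dx$ since $\partial_t \varphi \equiv 0$.
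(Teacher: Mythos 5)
Your proof is correct and follows essentially the same route as the paper: test the penalized equation with the fixed cut-off $\varphi\in\mathcal{W}$ (the paper pairs \eqref{2} with $\varphi$ at $t=T$, you pair \eqref{3} with $\varphi$ and use $\partial_t\varphi=0$ — these are equivalent), then bound the right-hand side pathwise via Remark \ref{Remark 120724_01} and in expectation via Lemma \ref{231019_lem1}. Your extra care about the powers of $\mathfrak{K}$ being $\leq 1$ (hence still in $L^1(\Omega)$) and about the legitimacy of the duality pairing is a welcome elaboration of what the paper leaves implicit.
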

\begin{proof}
Testing \eqref{2} with $\varphi$ and setting $t=T$ yields $\mathds{P}$-a.s.
\begin{align*}
n \int_{Q_T} \underbrace{u_n^- \varphi}_{\geq 0} \, dx \, dt = &\int_D u_n(T) \varphi \, dx - \int _D u_0 \varphi \, dx +\int_{Q_T} a(x, u_n, \nabla u_n) \nabla \varphi \, dx \,  dt \\
&- \int_{Q_T} f(u_n) \varphi \, dx \, dt - \int_D \bigg( \int_0^T \Phi \, dW\bigg) \varphi \, dx.
\end{align*}
Now, the boundedness of the right hand side of the equality $\mathds{P}$-a.s., as developed in Remark \ref{Remark 120724_01},  yields the second assertion. The first one follows by taking the expectation and Lemma \ref{231019_lem1}.
\end{proof}
\begin{lemma}\label{Lemma 190724_01}
We have
\begin{align*}
\nabla u_n \to ~\nabla u ~\text{in measure on}~ \Omega \times Q_T.
\end{align*}
Especially, for a subsequence $\nabla u_n \to \nabla u$ a.e. in $\Omega \times Q_T$.
\end{lemma}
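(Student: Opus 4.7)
The plan is to establish $\omega$-pathwise a.e.\ convergence of the gradients along a subsequence. Since $\nabla u_n$ is uniformly bounded in $L^p(Q_T)$ $\mathds{P}$-a.s.\ (Remark \ref{Remark 120724_01}), this will upgrade to convergence in measure on $\Omega\times Q_T$ by Vitali's theorem. Fix $\omega\in\widetilde{\Omega}$. Since $u_n$ and $u$ share the same additive noise $\int_0^{\cdot}\Phi\,dW$, the difference $v_n:=u_n-u$ lies in $\mathcal{W}$ with $v_n(0)=0$, and by Lemmas \ref{220805_lem01}, \ref{240806_lem1} we have $v_n\leq 0$ quasi-everywhere in $Q_T$. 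For fixed $k>0$, I would use the Boccardo-Murat type test function $\xi_n:=\varphi T_k(v_n)\in\mathcal{W}$, which is non-positive and satisfies $|\xi_n|\leq k\varphi$.

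Subtracting \eqref{5} from \eqref{3} and pairing with $\xi_n$, the chain rule for the time derivative yields the identity
\[
\int_D\varphi J_k(v_n(T))\,dx+\int_{Q_T}[a(\cdot,u_n,\nabla u_n)-\Psi]\cdot\nabla\xi_n\,dx\,dt+\int_{Q_T}[f(u_n)-f(u)]\xi_n\,dx\,dt-\int_{Q_T}nu_n^-\xi_n\,dx\,dt+\langle\eta,\xi_n\rangle_{\mathcal{W}',\mathcal{W}}=0,
\]
with $J_k(s):=\int_0^s T_k(\sigma)\,d\sigma\geq 0$. The boundary term is non-negative; the reaction term vanishes by Lipschitz continuity of $f$ and Lemma \ref{240806_lem1}; the penalty term satisfies $-\int nu_n^-\xi_n\,dx\,dt\geq 0$ since $\xi_n\leq 0$ and $nu_n^-\geq 0$. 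Writing $\nabla\xi_n=T_k(v_n)\nabla\varphi+\varphi\nabla T_k(v_n)$, the contribution from $T_k(v_n)\nabla\varphi$ vanishes by the weak-strong product rule, and adding and subtracting $a(\cdot,u_n,\nabla u)$ in the $\varphi\nabla T_k(v_n)$ contribution isolates the key non-negative quantity
\[
\mathcal{M}_n^k:=\int_{Q_T}\varphi[a(\cdot,u_n,\nabla u_n)-a(\cdot,u_n,\nabla u)]\cdot\mathds{1}_{\{|v_n|<k\}}\nabla v_n\,dx\,dt\geq 0,
\]
while the remaining crossed term $\int\varphi[a(\cdot,u_n,\nabla u)-\Psi]\cdot\nabla T_k(v_n)\,dx\,dt$ tends to zero thanks to (A3), the strong convergence of $a(\cdot,u_n,\nabla u)$ to $a(\cdot,u,\nabla u)$ in $L^{p'}(Q_T)$, and the weak convergence $\nabla T_k(v_n)\rightharpoonup 0$ in $L^p(Q_T)$.

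The main technical obstacle is passing to the limit in $\langle\eta,\xi_n\rangle_{\mathcal{W}',\mathcal{W}}$: $\eta$ need not be finite on $Q_T$ and the test function depends on $n$. The localization by $\varphi$ is designed precisely for this purpose. Passing to the limit in the uniform bound of Lemma \ref{Lemma 230224_01} shows that $\varphi\,d\eta$ is a finite Radon measure $\mathds{P}$-a.s. Since $v_n\in\mathcal{W}$ admits a $\capa_p$-quasi continuous representative and $v_n\to 0$ quasi-everywhere in $Q_T$ by Lemma \ref{240806_lem1}, the convergence also holds $\eta$-almost everywhere (the measure $\eta$ does not charge sets of zero parabolic capacity). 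Dominated convergence with majorant $k\varphi$ then yields $\int\varphi T_k(v_n)\,d\eta\to 0$, which I would identify with $\langle\eta,\xi_n\rangle_{\mathcal{W}',\mathcal{W}}$ via the $\capa_p$-quasi continuous representative framework developed in the Appendix.

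Collecting the sign information and the above limits, one concludes $\limsup_n\mathcal{M}_n^k\leq 0$, so $\mathcal{M}_n^k\to 0$. Since the integrand is non-negative by the strict monotonicity (A1), it converges to zero in $L^1(\{|v_n|<k\}\cap\{\varphi>0\})$, hence a.e.\ along a further subsequence. As $v_n\to 0$ a.e.\ the sets $\{|v_n|<k\}$ eventually cover $Q_T$ and $\varphi>0$ on $D$; a classical Leray-Lions/Boccardo-Murat argument based on (A1)-(A2) then upgrades this to $\nabla u_n\to\nabla u$ a.e.\ in $Q_T$ along the same subsequence. Combining the pathwise result with the uniform $L^p$-bounds from Lemma \ref{231019_lem1} and Remark \ref{Remark 120724_01} delivers, by Vitali's theorem, the required convergence in measure on $\Omega\times Q_T$.
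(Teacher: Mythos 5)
Your proposal takes a genuinely different route from the paper, but it contains a gap that is not merely technical. You test the difference of the penalized equation \eqref{3} and the limit equation \eqref{5} with $\xi_n=\varphi T_k(u_n-u)$. For this you assert that $v_n:=u_n-u$ lies in $\mathcal{W}$. That is false in general: while $u_n-\int_0^\cdot\Phi\,dW\in\mathcal{W}$, the limit satisfies only $\partial_t\bigl(u-\int_0^\cdot\Phi\,dW\bigr)=\operatorname{div}\Psi-f(u)+\eta$ in $\mathcal{W}'$, and $\eta$ is merely an element of $\mathcal{W}'$ (a Radon measure that need not be finite on $Q_T$, let alone lie in $L^{p'}(0,T;W^{-1,p'}(D))$). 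The absence of any ``ordered dual'' regularity for $\eta$ is precisely the difficulty this paper addresses. Consequently $\partial_t v_n\in\mathcal{W}'$ only, $\xi_n$ is not an admissible test function in the usual duality, and the chain-rule identity $\langle\partial_t v_n,\varphi T_k(v_n)\rangle=\int_D\varphi J_k(v_n(T))\,dx$ would require an integration-by-parts formula for functions whose time derivative contains a measure (\`a la Droniou--Porretta--Prignet), which you do not establish. The same difficulty resurfaces in the term $\langle\eta,\xi_n\rangle_{\mathcal{W}',\mathcal{W}}$: its sign is unfavorable (you get $\mathcal{M}_n^k\leq -\langle\eta,\xi_n\rangle+o(1)$ with $-\langle\eta,\xi_n\rangle\geq 0$), so you genuinely need it to vanish, and identifying the duality pairing with $\int_{Q_T}\varphi T_k(v_n)\,d\eta$ for the quasi-continuous representative requires the decomposition theory of diffuse (soft) measures, which the paper only develops for the localized measure $\eta^\varphi$ against continuous or Borel integrands, and only \emph{after} this lemma. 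There is also a circularity risk in ordering: your argument uses properties of $\eta$ and $\Psi$, whereas in the paper the identification $\Psi=a(\cdot,u,\nabla u)$ (Corollary \ref{Corollary 190724_01}) is a \emph{consequence} of Lemma \ref{Lemma 190724_01}.

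The paper avoids all of this by never touching the limit equation: it tests the difference of the equations for $u_n$ and $u_m$ (both honest elements of $\mathcal{W}$ up to the common noise) with $T_k(u_n-u_m)\varphi$, exploits the monotonicity $u_n\leq u_m$ for $m\geq n$ to control the penalization term via the $L^1$-bound of Lemma \ref{Lemma 230224_01}, and concludes with Boccardo's lemma that $(\nabla u_n)_n$ is Cauchy in measure on $\Omega\times Q_T$ — so $\eta$ never appears. If you want to salvage your approach you would have to first prove an integration-by-parts formula for $\mathcal{W}+(\text{measure-valued time derivatives})$ and the representation $\langle\eta,\xi\rangle=\int\widetilde{\xi}\,d\eta$ for all $\xi\in\mathcal{W}$; as written, the proof does not go through. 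A minor additional point: extracting $\omega$-dependent subsequences pathwise, as you do at the end, creates measurability issues that the paper's Cauchy-in-measure argument on the product space $\Omega\times Q_T$ sidesteps.
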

\begin{proof}
Set $k \in (0,1)$. We have $\mathds{P}$-a.s. in $\Omega$
\begin{align*}
&\partial_t ( u_n - u_m) - \bigg(\operatorname{div} [a(\cdot,u_n, \nabla u_n) - a(\cdot,u_m, \nabla u_m) ]\bigg) + f(u_n) - f(u_m) \\
&= n u_n^- - m u_m^-.
\end{align*}
Testing with $T_k(u_n - u_m) \varphi$, setting $\tilde{T}_k(r):= \int_0^r T_k(r') \, dr'$ and taking expectation yield:
\begin{align*}
&\erw \int_D \tilde{T}_k(u_n(t) - u_m(t)) \varphi \, dx \\
&+ \erw \int_0^t \int_D \bigg(a(x ,u_n, \nabla u_n) - a(x , u_m, \nabla u_m)\bigg) \nabla (T_k(u_n - u_m) \varphi) \, dx \, ds\\
&+ \erw \int_0^t \int_D \bigg(f(u_n) - f(u_m)\bigg) T_k(u_n - u_m) \varphi \, dx \, ds\\
&= \erw \int_0^t \int_D \bigg(n u_n^- - m u_m^-\bigg)T_k(u_n - u_m) \varphi \, dx \, ds\\
&\Leftrightarrow I_1+I_2+I_3=I_4.
\end{align*}
We get
\begin{align*}
I_1 \geq& 0, \\
I_2 =& \erw\int_0^t \int_D \varphi \mathds{1}_{\{|u_n - u_m| \leq k\}} \bigg(a(x , u_n, \nabla u_n) - a(x , u_m,\nabla u_m)\bigg)  \nabla (u_n - u_m) \, dx \, ds\\
&+ \erw \int_0^t \int_D T_k(u_n - u_m) \bigg(a(x , u_n, \nabla u_n) - a(x ,  u_m, \nabla u_m)\bigg)  \nabla \varphi \, dx \, ds = I_{2,1} + I_{2,2},
\end{align*}
\begin{align*}
|I_{2,2}| \leq& k \erw \int_{Q_T} |a(x , u_n,\nabla u_n)| + |a(x , u_m, \nabla u_m)| \, d(t,x) 
\\ \leq& Ck \erw \Big(\int_{Q_T} C_2|\nabla u_n|^{p-1} + C_2|\nabla u_m|^{p-1} + C_3| u_n|^{p-1} + C_3| u_m|^{p-1} + g(x)
\, d(t,x) \Big)
\\ \leq& Ck (1+ \erw \Vert u_n \Vert_{W^{1,p}_0(D)}^p + \erw \Vert u_m \Vert_{W^{1,p}_0(D)}^p) \leq \tilde C_1 k.
\end{align*}
Note that
\begin{align*}
I_{2,1}=&\erw\int_0^t \int_D \varphi \mathds{1}_{\{|u_n - u_m| \leq k\}} \bigg(a(x , u_n, \nabla u_n) - a(x , u_m, \nabla u_m)\bigg)  \nabla (u_n - u_m) \, dx \, ds
\\ =&
\erw\int_0^t \int_D \varphi \mathds{1}_{\{|u_n - u_m| \leq k\}} \bigg(a(x , u_n,\nabla u_n) - a(x ,u_m, \nabla u_n)\bigg)  \nabla (u_n - u_m) \, dx \, ds
\\ &+
\erw\int_0^t \int_D \varphi \mathds{1}_{\{|u_n - u_m| \leq k\}} \underbrace{\bigg(a(x , u_m, \nabla u_n) - a(x , u_m,\nabla u_m)\bigg)  \nabla (u_n - u_m)}_{\geq 0} \, dx \, ds,
\end{align*}
where, for the first term, 
\begin{align*}
&\Big|\erw\int_0^t \int_D \varphi \mathds{1}_{\{|u_n - u_m| \leq k\}} \bigg(a(x, u_n,\nabla u_n) - a(x,u_m, \nabla u_n)\bigg)  \nabla (u_n - u_m) \, dx \, ds\Big|
\\ \leq &
\erw\int_0^t \int_D \varphi \mathds{1}_{\{|u_n - u_m| \leq k\}}  |u_n - u_m|\Big(C_4 |\nabla u_n|^{p-1}+h(x)\Big)|\nabla (u_n - u_m)| \, dx \, ds
\\ \leq &
Ck\erw\int_0^t \int_D  |\nabla u_n|^{p}+h^{p'}(x)+ |\nabla (u_n - u_m)|^p \, dx \, ds \leq \tilde C_{1,1}k,
\end{align*}
\begin{align*}
|I_3| &= \bigg| \erw \int_0^T \int_D \bigg(f(u_n) - f(u_m)\bigg) T_k(u_n - u_m) \varphi \, dx \, dt\bigg| \leq L_f k \erw \Vert u_n - u_m \Vert_1 \leq \tilde C_3 k,
\end{align*}
\begin{align*}
I_4 &\leq \erw \int_0^t \int_D (n- m) u_m^- T_k(u_n - u_m) \varphi \, dx \, ds = \erw \int_0^t \int_D (m-n) u_m^- T_k(u_m - u_n) \varphi \, dx \, ds\\
&\leq k (m-n) \erw \int_0^t \int_D u_m^- \varphi \, dx \, ds \leq k m \erw \int_0^t \int_D u_m^- \varphi \, dx \, ds \leq \tilde C_4 k
\end{align*}
where $m \geq n$ and for some constants $\tilde C_1,\tilde C_{1,1},\tilde C_2,\tilde C_3,\tilde C_4 >0$. Therefore, from the last term of $I_{2,1}$, a constant $\tilde C>0$ exists such that, for any $k \in (0,1)$,  
\begin{align*}
0 \leq \erw\int_0^t \int_D \varphi \bigg(a(x, u_m, \nabla u_n) - a(x, u_m,\nabla u_m)\bigg)  \nabla T_k(u_n - u_m) \, dx \, ds \leq \tilde C k
\end{align*}
Let us set $\ell >0$, and note that 
\begin{align*}
&\{ |\nabla u_n - \nabla u_m| \geq \ell \} \subset \{ |u_n| + |\nabla u_n| + |u_m| + |\nabla u_m| >M \} \cup  \{ | u_n -  u_m| \geq k \}
\\ &\cup \{| u_n -  u_m| < k,  |u_n| + |u_m| + |\nabla u_n| + |\nabla u_m| \leq M, |\nabla u_n - \nabla u_m| \geq \ell \}
\\ &= A^M_1 \cup A^k_2 \cup A^{k,M,\ell}_3.
\end{align*}
By Markov's inequality, a positive constant $C$ exists such that 
\begin{align*}
\Big|A^M_1\Big| = \Big|  \{ |u_n| + |\nabla u_n| + |u_m| + |\nabla u_m| >M \}\Big| \leq \frac{C}{M^p}
\end{align*}
Let $\theta >0$ and fix $M$ such that $\frac{C}{M^p} \leq \frac{\theta}3$. Denote by $K_{M,\ell}$ the compact set 
\begin{align*}
K_{M,\ell}=\{(\lambda_1,\lambda_2,\xi_1,\xi_2)\in\R^{2+2d}, |\lambda_1|+|\lambda_2|+|\xi_1|+|\xi_2|\leq M, |\xi_1-\xi_2|\geq \ell \}
\end{align*}
 and note that for a fixed $x\in D$, $(\lambda_1,\lambda_2,\xi_1,\xi_2) \mapsto (a(x,\lambda_1,\xi_1)-a(x,\lambda_2,\xi_2))\cdot(\xi_1-\xi_2)$ is a continuous function satisfying $(a(x,\lambda_2,\xi_1)-a(x,\lambda_2,\xi_2))\cdot(\xi_1-\xi_2)> 0$ in $K_{M,\ell}$. Denote by $\gamma_{M,\ell}(x)>0$ the infimum of this function in $K_{M,\ell}$ and note that $\gamma_{M,\ell}$ is measurable as a function of variable $x$ in $D$. We recall that  $A^{k,M,\ell}_3=A^{M,\ell} \cap \{| u_n -  u_m| < k\}$ where 
 \begin{align*}
A^{M,\ell}&=\{|u_n| + |u_m| + |\nabla u_n| + |\nabla u_m| \leq M, |\nabla u_n - \nabla u_m| \geq \ell \}\\
&= \{(u_n, u_m,\nabla u_n,\nabla u_m)\in K_{M,\ell}\}
\end{align*}
so that one has
 \begin{align*}
&\erw\int_{A^{k,M,\ell}_3} \gamma_{M,\ell}(x) \varphi \, d(t,x) \\
&\leq \erw\int_{Q_T} \mathds{1}_{A^{k,M,\ell}_3}\varphi \bigg(a(x, u_m, \nabla u_n) - a(x, u_m,\nabla u_m)\bigg)  \nabla T_k(u_n - u_m) \, d(t,x)
\\ &\leq \erw\int_{Q_T} \varphi \bigg(a(x, u_m, \nabla u_n) - a(x, u_m,\nabla u_m)\bigg)  \nabla T_k(u_n - u_m) \, d(t,x) \\
&\leq \tilde C k.
\end{align*}
Let us recall \cite[Lemma 2]{Boccardo}: 
\begin{lemma}
Let $(\mathrm{X}, \mathrm{T}, \mathrm{m})$ be a measurable space such that $\mathrm{m}(\mathrm{X})<\infty$. Let $\beta: \mathrm{X} \rightarrow[0,+\infty]$ be a measurable function such that $\mathrm{m}(\{\mathrm{x} \in \mathrm{X}, \beta(\mathrm{x})=0\})=0$. Then for any $\varepsilon>0$, there exists $\delta>0$ such that for all $A \in \mathrm{T}$ we have
\begin{align*}
\int_{\mathrm{A}} \beta\, \mathrm{dm} \leq \delta  \Rightarrow m(A) \leq \varepsilon.
\end{align*}
\end{lemma}
For $X=\Omega\times Q_T$ with measure $d\mathds{P}\otimes dt\otimes dx$ and $\beta=\varphi\gamma_{M,\ell}$, then, for $\varepsilon = \frac\theta3$, there exists $\delta_{M,\ell,\varphi}>0$ such that for all $0 \leq k \leq \frac{\delta_{M,\ell,\varphi}}{\tilde C}$ we have $\Big|A^{k,M,\ell}_3\Big| \leq \frac\theta3$.
Now, choose $0 \leq k\leq \frac{\delta_{M,\ell,\varphi}}{\tilde C}$. Since $(u_n)_n$ converges a.e. in $\Omega\times Q_T$, it converges in measure so that there exists $n_0 \in \N$ such that 
\begin{align*}
\forall n, m \in \N,\quad n,m \geq n_0 \implies \Big| \{ | u_n -  u_m| \geq k \Big| \leq \frac{\theta}3.
\end{align*}
Hence we have $|\{|\nabla u_n - \nabla u_m| \geq l \} | \leq \theta$ and one concludes that $(\nabla u_n)_n$ is a Cauchy-sequence in measure, thus converges in measure in  $\Omega\times Q_T$ and a.e. in  $\Omega\times Q_T$ for a subsequence. 
\end{proof}
Lemma \ref{Lemma 190724_01} yields that for a subsequence $\nabla u_n \to \nabla u$ a.e. in $\Omega \times Q_T$ and therefore $a(\cdot, u_n, \nabla u_n) \to a(\cdot, u, \nabla u)$ a.e. in $\Omega \times Q_T$.  
Moreover, for a set of full measure $\widetilde{\Omega}$, for any $\omega \in \widetilde{\Omega}$, $a(\cdot, u_n(\omega), \nabla u_n(\omega)) \to a(\cdot, u(\omega), \nabla u(\omega))$ a.e. in $Q_T$.
Now, the weak convergence \eqref{231019_04} yields $\Psi = a(\cdot, u, \nabla u)$ in $L^{p'}(\Omega \times Q_T)$; and by Vitali's theorem,  $a(\cdot, u_n, \nabla u_n) \to a(\cdot, u, \nabla u)$ in $L^r(\Omega \times Q_T)$ for any $r \in [1,p')$. On the other hand, for another subsequence and a new set of full measure still denoted $\widetilde{\Omega}$, $ a(\cdot, u_n(\omega), \nabla u_n(\omega)) \to a(\cdot, u(\omega), \nabla u(\omega))$ in $L^r(Q_T)$ for any $\omega \in \widetilde{\Omega}$. Thanks to Remark \ref{Remark 120724_01}, for any $\omega \in \widetilde{\Omega}$, $\|a(\cdot, u_n(\omega), \nabla u_n(\omega))\|_{L^{p'}(Q_T)}^{p'} \leq \frak{L} <+\infty$ and  there exists $\chi(\omega) \in L^{p'}(Q_T)$ and a subsequence $n_k(\omega)$  such that $ a(\cdot, u_{n_k(\omega)}(\omega), \nabla u_{n_k(\omega)}(\omega)) \rightharpoonup \chi(\omega)$ in $L^{p'}(Q_T)$. Especially, this weak convergence holds true in $L^r(Q_T)$ which yields $\chi(\omega)= a(\cdot, u(\omega), \nabla u(\omega))$. Now, the subsequence principle yields that $ a(\cdot, u_n(\omega), \nabla u_n(\omega)) \rightharpoonup a(\cdot, u(\omega), \nabla u(\omega))$ in $L^{p'}(Q_T)$ for all $\omega \in \widetilde{\Omega}$. This argumentation proves the following corollary
\begin{corollary}\label{Corollary 190724_01}
We have $a(\cdot, u , \nabla u) = \Psi$ in $L^{p'}(\Omega \times Q_T)$ and, up to a subsequence, $\mathds{P}$-a.s. in $\Omega$,
\begin{align*}
a(\cdot, u_n, \nabla u_n) \rightharpoonup a(\cdot, u , \nabla u) ~\text{in}~L^{p'}(Q_T).
\end{align*}
\end{corollary}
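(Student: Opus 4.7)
The plan is to combine the almost-everywhere convergence of gradients established in Lemma \ref{Lemma 190724_01} with the uniform $L^{p'}$-bound on $a(\cdot, u_n, \nabla u_n)$ to identify the weak limit $\Psi$, and then to transfer the convergence to a pathwise statement via the subsequence principle.

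First, I would extract from Lemma \ref{Lemma 190724_01} a subsequence along which $\nabla u_n \to \nabla u$ almost everywhere on $\Omega \times Q_T$; after passing to a further subsequence, $u_n \to u$ almost everywhere on $\Omega \times Q_T$ as well, by Lemma \ref{240806_lem1}. Since $a$ is a Carath\'eodory function, continuity of $a(x,\cdot,\cdot)$ then yields $a(\cdot, u_n, \nabla u_n) \to a(\cdot, u, \nabla u)$ almost everywhere on $\Omega \times Q_T$.

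Next, estimate \eqref{240806_03} furnishes a uniform bound for the sequence $(a(\cdot, u_n, \nabla u_n))_n$ in $L^{p'}(\Omega \times Q_T)$. Combined with the pointwise convergence above, Vitali's theorem (equi-integrability follows from the $L^{p'}$-bound) gives strong convergence $a(\cdot, u_n, \nabla u_n) \to a(\cdot, u, \nabla u)$ in $L^r(\Omega \times Q_T)$ for every $r \in [1, p')$. Since by \eqref{231019_04} we also have $a(\cdot, u_n, \nabla u_n) \rightharpoonup \Psi$ weakly in $L^{p'}(\Omega \times Q_T)$, and this weak convergence passes to $L^r$, uniqueness of the $L^r$-limit forces $\Psi = a(\cdot, u, \nabla u)$ in $L^{p'}(\Omega \times Q_T)$.

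Finally, for the pathwise weak convergence, after passing to yet another subsequence and removing a null set from $\widetilde{\Omega}$, the a.e.\ convergence $a(\cdot, u_n(\omega), \nabla u_n(\omega)) \to a(\cdot, u(\omega), \nabla u(\omega))$ holds in $Q_T$ for every $\omega \in \widetilde{\Omega}$, while by Remark \ref{Remark 120724_01} the sequence is pathwise bounded in $L^{p'}(Q_T)$. For each fixed $\omega \in \widetilde{\Omega}$, reflexivity of $L^{p'}(Q_T)$ extracts a sub-subsequence converging weakly to some $\chi(\omega) \in L^{p'}(Q_T)$; applying Vitali again on $Q_T$ identifies $\chi(\omega) = a(\cdot, u(\omega), \nabla u(\omega))$. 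Since the weak limit is independent of the extracted sub-subsequence, the subsequence principle promotes this to weak convergence of the whole extracted subsequence in $L^{p'}(Q_T)$, $\mathds{P}$-a.s. The main subtlety is precisely this last step: the sub-subsequence extraction is a priori $\omega$-dependent, and one must invoke the uniqueness of the limit to remove that dependence; no further analytic tool beyond those already established is required.
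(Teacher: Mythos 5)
Your proposal is correct and follows essentially the same route as the paper: a.e.\ convergence of $a(\cdot,u_n,\nabla u_n)$ from Lemma \ref{Lemma 190724_01} and the Carath\'eodory property, Vitali's theorem in $L^r(\Omega\times Q_T)$ for $r<p'$ to identify $\Psi$ with the weak $L^{p'}$-limit from \eqref{231019_04}, and then the pathwise bound of Remark \ref{Remark 120724_01} together with an $\omega$-wise sub-subsequence extraction and the subsequence principle to obtain the $\mathds{P}$-a.s.\ weak convergence in $L^{p'}(Q_T)$. No substantive differences from the paper's argument.
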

Moreover, we have the following result:
\begin{lemma}\label{CVWprime}
$\mathds{P}$-a.s. in $\Omega$, we have
\begin{align*}
\eta_n \rightharpoonup \eta ~\text{in}~\mathcal{W}'.
\end{align*}
\end{lemma}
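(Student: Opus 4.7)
The plan is to upgrade the weak convergence in $L^{p'}(\Omega; \mathcal{W}')$ from Lemma \ref{lem-cv-reflected} to a $\mathds{P}$-almost sure weak convergence in $\mathcal{W}'$, where $\eta_n := n u_n^-$. Since $\mathcal{W}$ is a separable reflexive Banach space, it suffices to verify two ingredients: (i) $\mathds{P}$-a.s. boundedness of $(\eta_n(\omega))_n$ in $\mathcal{W}'$, and (ii) pointwise convergence of $\langle \eta_n(\omega), \xi \rangle_{\mathcal{W}', \mathcal{W}}$ for every $\xi$ in a countable dense subset of $\mathcal{W}$, with identification of the limit.

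For (i), I would invoke the pathwise estimate \eqref{240809_03}, which expresses $\|\eta_n\|_{\mathcal{W}'}^{p'}$ in terms of $\sup_t \|u_n(t)\|_2^2$, $\|u_n\|_{L^p(0,T;W_0^{1,p}(D))}^p$, $\|u_0\|_2$, and norms of the stochastic integral $\int_0^\cdot \Phi \, dW$; combined with Remark \ref{Remark 120724_01}, which provides a $\mathds{P}$-integrable bound $\frak{K}$ on all these quantities uniformly in $n$ on a set $\widetilde{\Omega}$ of full measure, this yields the required pathwise boundedness $\sup_n \|\eta_n(\omega)\|_{\mathcal{W}'} < \infty$ for $\omega \in \widetilde{\Omega}$.

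For (ii), I would fix $\xi \in \mathcal{W}$ and pass to the limit pathwise in identity \eqref{231019_08}. The boundary terms converge thanks to the pointwise-in-time convergence $u_n(T, \omega) \to u(T, \omega)$ in $L^2(D)$ from Lemma \ref{110724_rem1} and Remark \ref{u_at_time-t}; the term involving $\partial_t \xi$ converges via the strong $L^2(Q_T)$ convergence of $u_n(\omega)$ from Lemma \ref{240806_lem1}; the reaction term converges by the Lipschitz continuity of $f$ combined with the same strong convergence; and the divergence term converges against the pathwise weak $L^{p'}(Q_T)$ convergence $a(\cdot, u_n(\omega), \nabla u_n(\omega)) \rightharpoonup a(\cdot, u(\omega), \nabla u(\omega))$ provided by Corollary \ref{Corollary 190724_01}. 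Comparing the resulting limit with equation \eqref{5}, and using $\Psi = a(\cdot, u, \nabla u)$, identifies this limit as $\langle \eta(\omega), \xi \rangle_{\mathcal{W}', \mathcal{W}}$. Combined with (i), reflexivity of $\mathcal{W}$ (which makes the weak and weak-$\ast$ topologies on $\mathcal{W}'$ coincide) then yields the $\mathds{P}$-a.s. weak convergence $\eta_n(\omega) \rightharpoonup \eta(\omega)$ in $\mathcal{W}'$.

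The principal difficulty is purely organizational: one must ensure that the pathwise convergences from Lemma \ref{240806_lem1}, Lemma \ref{110724_rem1}, Corollary \ref{Corollary 190724_01}, and the bound in Remark \ref{Remark 120724_01} are all extracted along a common subsequence on a single set of full $\mathds{P}$-measure. Since the excerpt has already arranged for $\widetilde{\Omega}$ to absorb each of these almost-sure statements after successive subsequence extractions, no new probabilistic argument is required, and the conclusion follows.
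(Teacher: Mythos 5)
Your proposal is correct and follows essentially the same route as the paper: pass to the limit pathwise in the identity \eqref{231019_08}/\eqref{220724_01} using the a.s. convergences of Lemmas \ref{240806_lem1}, \ref{110724_rem1} and Corollary \ref{Corollary 190724_01}, then identify the limit functional with $\eta$ by comparison with the equation obtained at the level of expectations (the paper compares \eqref{220724_02} with \eqref{220724_03}, which is equivalent to your comparison with \eqref{5}). The only cosmetic difference is that you add the standard ``uniform pathwise bound plus convergence on a countable dense subset'' scaffolding, whereas the paper obtains the limit functional directly as the left-hand side of the limiting identity; both are valid.
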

\begin{proof}
Going back to \eqref{231019_08}, since $A \in \mathcal{F}$ can be chosen arbitrary we obtain $\mathds{P}$-a.s. in $\Omega$
\begin{align}\label{220724_01}
\begin{aligned}
    &\int_D \left(u_n(T) - \int_0^T \Phi \, dW \right)\xi(T) - u_0 \xi(0) \, dx \\
    &- \int_0^T\int_D \left(u_n-\int_0^{\cdot} \Phi\,dW\right)\partial_t \xi\,dx\,dt+\int_0^T\int_D f(u_n) \xi \,dx\,dt\\
    &+\int_0^T\int_D a(x,u_n,\nabla u_n) \cdot\nabla \xi\,dx\,dt\\
    &=\int_0^T\int_D nu_n^{-}\xi\,dx\,dt.
\end{aligned}
\end{align}
The convergence results of Lemma \ref{231019_lem2}, Remark \ref{110724_rem1} and Corollary \ref{Corollary 190724_01} yield that we can pass to the limit in \eqref{220724_01} and we obtain $\mathds{P}$-a.s.
\begin{align}\label{220724_02}
\begin{aligned}
    &\int_D \left(u_T - \int_0^T \Phi \, dW \right)\xi(T) - u_0 \xi(0) \, dx \\
    &- \int_0^T\int_D \left(u-\int_0^{\cdot} \Phi\,dW\right)\partial_t \xi\,dx\,dt+\int_0^T\int_D f(u)\xi  \, dx\,dt\\
    &+\int_0^T\int_D a(x,u,\nabla u)\cdot\nabla \xi\,dx\,dt\\
    &=\langle \tilde{\eta}, \xi \rangle_{\mathcal{W}', \mathcal{W}}
\end{aligned}
\end{align}
for some random element $\tilde{\eta} \in \mathcal{W}'$. Since \eqref{220724_03} is valid for arbitrary $A \in \mathcal{F}$, the comparison of \eqref{220724_03} and \eqref{220724_02} yields $\mathds{P}$-a.s. in $\Omega$ and for all $\xi \in \mathcal{W}$
\begin{align*}
\langle \tilde{\eta}, \xi \rangle_{\mathcal{W}', \mathcal{W}}= \langle \eta, \xi \rangle_{\mathcal{W}', \mathcal{W}},
\end{align*}
hence $\tilde{\eta} = \eta$.
\end{proof}
\begin{lemma}\label{241204_lem01}
	$(\eta(\omega))_{\omega\in\Omega}$ is a weak-$\ast$ adapted, random non-negative Radon measure.
\end{lemma}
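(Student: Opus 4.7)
The first part of the statement, namely that $\eta(\omega)$ is a non-negative Radon measure on $Q_T$ for $\omega$ in a set of full measure, is already established in Lemma \ref{lem-cv-reflected}. The substance of what remains is the weak-$\ast$ adaptedness: for fixed $\psi\in\mathcal{C}_c(Q_T)$ and $t\in(0,T)$, I must show that $\omega\mapsto \int_{(0,t]\times D}\psi\,d\eta_\omega$ is $\mathcal{F}_t$-measurable.

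My strategy is to approximate $\mathds{1}_{(0,t]}$ by smooth time-cutoffs whose products with $\psi$ are admissible test functions for the pathwise weak convergence $n u_n^-\rightharpoonup \eta$ in $\mathcal{W}'$ provided by Lemma \ref{CVWprime}. By mollification keeping a common compact support in $Q_T$, I first reduce to $\psi\in\mathcal{D}(Q_T)$. Then I pick smooth $\chi_k:[0,T]\to[0,1]$ with $\chi_k\equiv 1$ on $[0,t]$ and $\chi_k\equiv 0$ on $[t+1/k,T]$, so that $\chi_k\psi\in\mathcal{D}(Q_T)\subset\mathcal{W}$. Lemma \ref{CVWprime}, together with the identification of $\eta(\omega)$ as a Radon measure acting on smooth compactly supported test functions in the expected way, gives $\mathds{P}$-a.s.
\[
\int_{Q_T} n u_n^-(\omega,s,x)\,\chi_k(s)\,\psi(s,x)\,dx\,ds \;\longrightarrow\; \int_{Q_T}\chi_k\psi\,d\eta_\omega.
\]
The left-hand side only involves $u_n$ on $[0,t+1/k]\times D$; predictability of $u_n$ and Fubini's theorem then make it $\mathcal{F}_{t+1/k}$-measurable for every $n$, and completeness of the filtration transfers this measurability to the $\mathds{P}$-a.s.\ limit $\omega\mapsto \int_{Q_T}\chi_k\psi\,d\eta_\omega$.

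To close, observe that $\chi_k\to\mathds{1}_{[0,t]}$ pointwise on $[0,T]$ with $|\chi_k\psi|\leq|\psi|$, and $\eta_\omega$ is finite on the compact set $\operatorname{supp}(\psi)\subset Q_T$. Dominated convergence yields
\[
\int_{Q_T}\chi_k\psi\,d\eta_\omega \;\longrightarrow\; \int_{[0,t]\times D}\psi\,d\eta_\omega \;=\; \int_{(0,t]\times D}\psi\,d\eta_\omega,
\]
the last equality using that $\psi$ vanishes on $\{0\}\times D$. Thus the target map is an a.s.\ pointwise limit of $\mathcal{F}_{t+1/k}$-measurable maps, hence $\mathcal{F}_{t+\varepsilon}$-measurable for every $\varepsilon>0$, and the right-continuity of $(\mathcal{F}_s)_s$ upgrades this to $\mathcal{F}_t$-measurability. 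The extension from $\mathcal{D}(Q_T)$ back to $\mathcal{C}_c(Q_T)$ is routine, using uniform approximation on a common compact support and the local finiteness of $\eta_\omega$. The one subtle point, which dictates the whole design, is the possible mass of $\eta_\omega$ on $\{t\}\times D$: this is precisely why I approximate $\mathds{1}_{(0,t]}$ from above by cutoffs supported in $[0,t+1/k]$ and rely on right-continuity of the filtration, rather than approximating from below and trying to read off $\mathcal{F}_t$-measurability directly from the approximants.
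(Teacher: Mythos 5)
Your proof is correct and follows essentially the same route as the paper: both arguments test the pathwise weak convergence $nu_n^-\rightharpoonup\eta$ in $\mathcal{W}'$ against $\psi$ multiplied by a smooth time-cutoff equal to $1$ on $[0,t]$ and vanishing after $t+1/k$, deduce $\mathcal{F}_{t+1/k}$-measurability from the adaptedness of the approximants, pass to the limit $k\to\infty$ by dominated convergence, and invoke right-continuity of the filtration to land in $\mathcal{F}_t$, finishing with uniform approximation to pass from smooth to merely continuous compactly supported $\psi$.
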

\begin{proof}
		Consider $t_0 \in (0,T)$, $\xi \in \mathcal{C}^\infty_c(Q_T)$ and $\varphi_k$ a non-increasing $\mathcal{C}^\infty_c([0,T))$ function satisfying $\varphi_k(t)=1$ if $t\leq t_0$ and $\varphi_k(t)=0$ if $t\geq t_0+\frac1k$ (for $k\in \mathbb{N}$ large enough).
		Then, $\xi\varphi_k \in \mathcal{C}^\infty_c(Q_T)$ and, using the convergence result of Lemma \ref{CVWprime}, $\mathds{P}$-a.s. we have 
		\begin{align*}
			&n \dint_{(0,t_0+\frac1k]\times D} \xi\varphi_k u_n^- \ d(t,x) = n \dint_{Q_T} \xi\varphi_k u_n^- \ d(t,x)\\ &\to  \langle \eta, \xi\varphi_k \rangle_{\mathcal{W}', \mathcal{W}} = \dint_{Q_T} \xi\varphi_k \ d \eta = \dint_{(0,t_0+\frac{1}{k}]\times D} \xi\varphi_k \ d \eta .
		\end{align*}
		Therefore, 
		\[\dint_{(0,t_0+\frac{1}{k}]\times D} \xi\varphi_k \ d \eta\] 
		is $\mathcal{F}_{t_0+\frac1k}$ measurable. Now, we fix $k_0\in\mathbb{N}$ large enough. Then, $\mathds{P}$-a.s., one may apply Lebesgue's theorem to pass to the limit for $k \to +\infty$. In this way it follows that
		 \[\dint_{(0,t_0]\times D} \xi \ d \eta\] 
		 is $\mathcal{F}_{t_0+\frac{1}{k_0}}$ measurable for any $k_0\in\mathbb{N}$ large enough, thus $\mathcal{F}_{t_0^+}$ measurable, \textit{i.e.} $\mathcal{F}_{t_0}$ measurable by usual assumptions on the filtration. Finally, if $\xi \in \mathcal{C}_c(Q_T)$, since it can be uniformly approximated by a sequence $(\xi_n)_n \subset \mathcal{C}^\infty_c(Q_T)$, one gets that 
		\[\dint_{(0,t_0]\times D} \xi \ d \eta=\lim_{n\rightarrow\infty}\dint_{(0,t_0]\times D} \xi_n \ d \eta\] 
		and the $\mathcal{F}_{t_0}$ measurability is kept at the limit.
		\end{proof}

In the sequel, let $\varphi\in \mathcal{C}^\infty_c(Q_T)$ be a non-negative weight function for the measure $\eta$.
\begin{lemma}\label{narrowconvergence_241011}
For $0\leq \varphi \in \mathcal{C}^\infty_c(Q_T)$, set $\eta_{n}^\varphi=n u_{n}^- \varphi$ and  $\eta^\varphi:\W \to\R$,  $v\mapsto \langle \eta,v \varphi \rangle$. For a set $\widetilde \Omega_\varphi$ of full measure in $\Omega$, $\eta_{n}^\varphi$ converges weakly to $\eta^\varphi$ in $\mathcal{W}'$ and narrowly in the sense of Radon measures to a non-negative measure still denoted $\eta^\varphi$. 
\end{lemma}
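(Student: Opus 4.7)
The plan is to derive both statements from the weak convergence $nu_n^- \rightharpoonup \eta$ in $\mathcal{W}'$ furnished by Lemma \ref{CVWprime}, combined with the $L^1$-bound of Lemma \ref{Lemma 230224_01}. Take $\widetilde\Omega_\varphi$ to be the intersection of the two corresponding full-measure sets and fix $\omega \in \widetilde\Omega_\varphi$; denote by $K := \operatorname{supp}(\varphi) \subset Q_T$ the compact support of $\varphi$.

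For the weak convergence in $\mathcal{W}'$, the key observation is that for any $v \in \mathcal{W}$ the product $v\varphi$ again lies in $\mathcal{W}$ (as noted before Lemma \ref{Lemma 230224_01}, since $\varphi \in \mathcal{C}^\infty_c(Q_T)$). Therefore
\[
\langle \eta_n^\varphi, v\rangle_{\mathcal{W}',\mathcal{W}} = \int_{Q_T} n u_n^-\,\varphi v \, d(t,x) = \langle n u_n^-, v\varphi\rangle_{\mathcal{W}',\mathcal{W}} \xrightarrow[n\to\infty]{} \langle \eta, v\varphi\rangle = \langle \eta^\varphi, v\rangle,
\]
which proves the first assertion.

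For narrow convergence, I would identify the candidate limit with the non-negative Radon measure $\varphi\cdot\eta$ on $Q_T$; the two notions of $\eta^\varphi$ coincide on $\xi \in \mathcal{C}^\infty_c(Q_T) \subset \mathcal{W}$ since $\langle \eta,\xi\varphi\rangle_{\mathcal{W}',\mathcal{W}}=\int_{Q_T}\xi\varphi\, d\eta=\int_{Q_T}\xi\,d(\varphi\eta)$. The measures $\eta_n^\varphi$ and $\varphi\cdot\eta$ are all supported in $K$, their total masses are uniformly bounded by $\frak{K_2}(\omega)$ thanks to Lemma \ref{Lemma 230224_01}, and testing the weak convergence in $\mathcal{W}'$ against $\varphi$ itself yields the total mass convergence
\[
\eta_n^\varphi(Q_T) = \int_{Q_T} n u_n^- \varphi \, d(t,x) = \langle n u_n^-, \varphi\rangle \;\to\; \langle \eta, \varphi\rangle = \int_{Q_T}\varphi\,d\eta = \eta^\varphi(Q_T).
\]

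Since all measures are concentrated in the fixed compact $K$, narrow convergence on $Q_T$ reduces to convergence against test functions in $\mathcal{C}_c(Q_T)$. Given such $\psi$, approximate it uniformly by a sequence $(\xi_k)_k \subset \mathcal{C}^\infty_c(Q_T)$ with supports in a fixed compact $K' \supset \operatorname{supp}(\psi)$. Since each $\xi_k\varphi \in \mathcal{W}$, the weak convergence already established gives $\int \xi_k\,d\eta_n^\varphi \to \int \xi_k\,d\eta^\varphi$ for every fixed $k$, and combined with the uniform mass bound, the triangle inequality
\[
\left| \int \psi\,d\eta_n^\varphi - \int \psi\,d\eta^\varphi\right| \leq \|\psi-\xi_k\|_\infty\bigl(\frak{K_2}(\omega)+\eta^\varphi(Q_T)\bigr) + \left|\int \xi_k\,d\eta_n^\varphi - \int\xi_k\,d\eta^\varphi\right|
\]
closes the argument by sending $n\to\infty$ first and then $k\to\infty$. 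No real obstacle arises: the result is a soft consequence of the two cited lemmas, the only technical ingredients being the preservation of $\mathcal{W}$ under multiplication by the smooth cutoff $\varphi$ and the usual density of $\mathcal{C}^\infty_c(Q_T)$ in $\mathcal{C}_c(Q_T)$.
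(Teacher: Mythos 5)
Your proof is correct, and its first half (weak convergence in $\mathcal{W}'$ via the observation that $v\varphi\in\mathcal{W}$, the pathwise $L^1$-bound on $nu_n^-\varphi$, the identification of the limit measure through its action on $\mathcal{D}(Q_T)$, and the convergence of total masses obtained by testing against $\varphi$ itself) coincides with the paper's argument. Where you genuinely diverge is in how narrow convergence is extracted. The paper uses the $L^1$-bound to extract, for a.e.\ $\omega$, a subsequence of $(\eta_n^\varphi)$ converging weak-$\ast$ to some non-negative measure $\mu_\omega$, identifies $\mu_\omega=\eta^\varphi$ as distributions, upgrades to the whole sequence by the subsequence principle, and only then invokes \cite[Proposition 9.9.3]{M74} (weak-$\ast$ convergence plus convergence of total masses implies narrow convergence). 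You instead prove convergence against every $\psi\in\mathcal{C}_c(Q_T)$ directly, by uniform approximation with $\xi_k\in\mathcal{C}^\infty_c(Q_T)$ combined with the uniform mass bound, and then observe that since all the measures are carried by the fixed compact $K=\operatorname{supp}\varphi\subset Q_T$, testing against $\mathcal{C}_c(Q_T)$ already yields narrow convergence (cut off any $f\in\mathcal{C}_b(Q_T)$ with $\chi\in\mathcal{C}_c(Q_T)$, $\chi\equiv 1$ on $K$). This buys a more elementary, subsequence-free argument that avoids the cited measure-theoretic proposition; the paper's route is the one that would survive without the common compact support. Two small points you should tighten: Lemma \ref{Lemma 230224_01} as stated concerns the specific weight $\min(1,\operatorname{dist}(\cdot,\partial D))$, so for a general $0\leq\varphi\in\mathcal{C}^\infty_c(Q_T)$ (which also depends on $t$) you must either re-run that computation using \eqref{231019_08}, as the paper does to produce $\frak{K_3}$, or dominate $\varphi\leq C_\varphi\min(1,\operatorname{dist}(\cdot,\partial D))$ on its compact support; and the reduction of narrow to vague convergence under a common compact support, which you assert, deserves the one-line cutoff justification above.
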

\begin{proof}
Note first that since $v \varphi \in \mathcal{W}$ for any $v \in \mathcal{W}$, Lemma \ref{CVWprime} yields the weak convergence of $\eta_{n}^\varphi$ to $\eta^\varphi$ in $\mathcal{W}'$. With similar arguments as in Lemma \ref{Lemma 230224_01} it follows that $0 \leq n u_n^- \varphi$ is bounded in $L^1(\Omega \times Q_T)$ and $\mathds{P}$-a.s. bounded by $\frak{K_3}(\omega)$, for $\frak{K_3} \in L^1(\Omega)$. As a consequence, $\mathds{P}$-a.s., there exists a subsequence $n_{k(\omega)}$, such that $\eta_{n_{k(\omega)}}^\varphi :=n_{k(\omega)} u_{n_{k(\omega)}}^- \varphi$ converges weakly in the sense of Radon measures to a non-negative measure $\mu_\omega$. In particular, for any $v \in \mathcal{C}_c(Q_T)\cap\mathcal{W}$,  $v \varphi \in \mathcal{W}$ and 
\begin{align}\label{CVdistrib}
\langle \eta  , \varphi v \rangle_{\mathcal{W}^\prime,\mathcal{W}} =\lim_{k(\omega)\to\infty} \langle \eta_{n_{k(\omega)}}^\varphi  , v \rangle_{\mathcal{W}^\prime,\mathcal{W}} = \lim_{k(\omega)\to\infty} \int_{Q_T}  v \eta_{n_{k(\omega)}}^\varphi  d(t,x) = \int_{Q_T}v d\mu_\omega.
\end{align}
From \eqref{CVdistrib} it follows that there exists a full measure set $\widetilde{\Omega}_\varphi$ of $\Omega$ such that
\[\eta^{\varphi}(\omega)=\mu_{\omega} \ \text{in} \ \mathcal{D}'(Q_T)\]
for all $\omega\in\widetilde{\Omega}_\varphi$. Since $\eta^{\varphi}$ is a non-negative distribution, it is a Radon measure and therefore $\eta^{\varphi}(\omega)=\mu_{\omega}$ as Radon measures for all $\omega\in\widetilde{\Omega}_\varphi$. Thus, the whole sequence $(\eta^{\varphi}_n)_n$ converges to $\eta^{\varphi}$ weakly in the sense of Radon measures.
Let $K \subset Q_T$ be the compact support of $\varphi$ and $0 \leq \Theta \in \mathcal{C}_c(Q_T)$ with $\Theta=1$ in $K$. Then, for any $\xi \in \mathcal{W}\cap \mathcal{C}(\overline{Q}_T)$,  the definition of  $\eta_\varphi$ yields
\begin{align}\label{241211_01}
	&\int_{Q_T} 1 \,d\eta^{\varphi}=\int_K 1  \,d\eta^{\varphi}=\int_K \Theta  \,d\eta^{\varphi} =\int_{Q_T}\Theta  \,d\eta^{\varphi}=\langle \eta  , \varphi \Theta \rangle_{\mathcal{W}^\prime,\mathcal{W}}=\int_{Q_T}\varphi\Theta \,d\eta =\int_{Q_T}\varphi \,d\eta
	\end{align}
Therefore, using \eqref{241211_01}, it follows that
\begin{align*}
\lim_{n\to\infty}\int_{Q_T}1 \,d\eta_{n}^\varphi=\lim_{n\to\infty}\langle n u_{n}^- ,\varphi \rangle_{\mathcal{W}^\prime,\mathcal{W}}= \langle \eta ,\varphi \rangle_{\mathcal{W}^\prime,\mathcal{W}} = \int_{Q_T}\varphi \,d\eta=\int_{Q_T}1 \,d\eta^\varphi.
\end{align*}
and the narrow convergence now follows by \cite[Proposition 9.9.3]{M74}.
\end{proof}
\begin{remark}\label{Rem4.10}
$\mathds{P}$-a.s., $\eta^\varphi$ is a bounded non-negative Radon measure and $\eta$ is a non-negative Radon measure such that
\[\dint_{Q_T} \xi \, d\eta^\varphi = \langle \eta^\varphi,\xi \rangle= \langle \eta,\xi\varphi \rangle = \int_{Q_T} \xi \varphi \,d\eta\]
for all $\xi \in \mathcal{C}^1_c(Q_T)$. Therefore, $\eta^\varphi$ is absolutely continuous with respect to $\eta$ with density $\varphi$. So that the above equality of integrals holds for any non-negative (or bounded, respectively) Borel function $\xi$ on $Q_T$. Moreover, from Lemma \ref{lem-cv-reflected} it immediately follows that 
\begin{align*}
	\eta_n^{\varphi} \rightharpoonup \eta^{\varphi}~\text{in}~L^{p'}(\Omega, \mathcal{W}').
\end{align*}
\end{remark}
\section{Properties of $u$ and $\eta$}\label{Section-reflected+solution}\label{s5}
\begin{proposition}\label{cadlag}
As a complement to Lemmas \ref{110724_rem1}, \ref{Lemma 220724_01} and Remark \ref{u_at_time-t}, which state the existence of $u(t)$ in $L^2(\Omega \times D)$ for any $t \in [0,T]$, and,  almost surely in $\Omega$,  the existence of $u(\omega,t) \in L^2(D)$ for any $t \in [0,T]$ for the representative of $u$ we consider, one has that $\mathds{P}$-a.s., the mapping $t \mapsto u(t)$ is weakly c\'{a}dl\`{a}g in $L^2(D)$, strongly c\'{a}dl\`{a}g in $W^{-1,p'}(D)$ and right-continuous in $L^2(D)$.
\\
The initial value $u(0)=u_0$ is satisfied in this sense.
\end{proposition}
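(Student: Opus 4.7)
Fix $\omega$ in a full-measure subset of $\Omega$ on which all almost-sure convergences and pathwise bounds from Sections \ref{s3}--\ref{s4} hold simultaneously. The strategy is to extract a c\`adl\`ag time-structure directly from the equation of Definition \ref{Solution}(iii) by testing against separated variables, propagate it to the different ambient topologies via density and compactness, and finally establish strong right-continuity in $L^2(D)$ via an energy inequality inherited from the penalized problem.

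\emph{Scalar-in-time reduction.} For each test function $\psi \in \mathcal{D}(D)$, I test the equation in $\mathcal{W}'$ against $\chi(t)\psi(x)$ with $\chi \in \mathcal{D}(0,T)$. Writing $H_\psi(t) := \int_D \bigl(u(t) - \int_0^t \Phi\,dW\bigr)\psi\,dx$, this yields in $\mathcal{D}'(0,T)$ the identity $H_\psi' = G_\psi + \nu_\psi$, where $G_\psi \in L^{p'}(0,T)$ bundles the (bounded) diffusion--convection and reaction contributions, and $\nu_\psi$ is the non-negative Radon measure on $(0,T)$ given by the $\psi$-marginal $\nu_\psi(B) := \int_{B \times D} \psi \, d\eta$. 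Since every $\psi \in \mathcal{D}(D)$ is dominated on its compact support by a constant multiple of the distance-to-boundary weight $\varphi$ of Section \ref{s4}, Lemma \ref{Lemma 230224_01} yields $\nu_\psi((0,T)) < \infty$ $\mathds{P}$-a.s. Consequently $t \mapsto \nu_\psi((0,t])$ is non-decreasing and right-continuous with left limits, so $H_\psi$ admits a c\`adl\`ag representative; adding back the continuous stochastic integral, $t \mapsto \int_D u(t)\psi\,dx$ is c\`adl\`ag.

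\emph{Identification, weak $L^2$-c\`adl\`ag, strong $W^{-1,p'}$-c\`adl\`ag, and initial value.} To check that this c\`adl\`ag function coincides with the pointwise representative of $u$ from Lemma \ref{110724_rem1}, I pass to the limit $n \to \infty$ in the integrated form of the penalized equation \eqref{2} tested against $\psi$: the linear terms converge by Corollary \ref{Corollary 190724_01} and the $L^2$-convergence $u_n(t) \to u(t)$ of Lemma \ref{110724_rem1}, while a variant of the narrow convergence $n u_n^- \psi \to \eta^\psi$ (Lemma \ref{narrowconvergence_241011}, adapted to $\psi \in \mathcal{D}(D)$ using the finiteness of $\nu_\psi$) combined with the portmanteau theorem recovers $\nu_\psi((0,t])$ at continuity points, and the monotone structure $u_n \nearrow u$ extends this identification to every $t$. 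Density of $\mathcal{D}(D)$ in $L^2(D)$ together with the pathwise bound $\sup_t \|u(t)\|_2 < \infty$ (Lemma \ref{Lemma 220724_01}) then extends c\`adl\`ag-ness of $\langle u(\cdot),\psi\rangle$ from $\psi \in \mathcal{D}(D)$ to every $\psi \in L^2(D)$, giving the weakly c\`adl\`ag property in $L^2(D)$. Since $p \geq 2$, Rellich yields a compact embedding $W^{1,p}_0(D) \hookrightarrow L^2(D)$ and, by duality, $L^2(D) \hookrightarrow W^{-1,p'}(D)$ is also compact; combined with the uniform $L^2$-bound this upgrades weak $L^2$-c\`adl\`ag to strong c\`adl\`ag in $W^{-1,p'}(D)$. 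The initial value $u(0)=u_0$ is immediate from $u_n(0)=u_0$ and Lemma \ref{110724_rem1}, and right-continuity at $0$ follows from $\nu_\psi((0,t]) \downarrow 0$ as $t \downarrow 0$ by finiteness of $\nu_\psi$.

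\emph{Strong right-continuity in $L^2(D)$ -- the main obstacle.} The delicate step is the norm convergence $\|u(t_m)\|_2 \to \|u(t_0)\|_2$ as $t_m \downarrow t_0$, which combined with the weak convergence above promotes to strong $L^2$-right-continuity. My plan is to use an energy inequality inherited from the penalized problem: It\^o's formula applied to $\tfrac12\|u_n(t)\|_2^2$ in \eqref{2} yields an energy identity in which the reflection contribution $-n\|u_n^-\|_2^2$ is non-positive; letting $n \to \infty$, using the a.e. convergence $\nabla u_n \to \nabla u$ (Lemma \ref{Lemma 190724_01}) together with Fatou's lemma on the coercive term to handle $\limsup_n(-\!\int\! a(\cdot,u_n,\nabla u_n)\cdot\nabla u_n)$, the pointwise $L^2$-convergence $u_n(t) \to u(t)$ of Lemma \ref{110724_rem1}, and the convergence of the stochastic term from Lemma \ref{Lemma 2}, produces the pathwise inequality
\begin{align*}
\tfrac12\|u(t)\|_2^2 \leq \tfrac12\|u(s)\|_2^2 - \int_s^t\!\!\int_D a(\cdot,u,\nabla u)\cdot\nabla u\,dx\,d\tau - \int_s^t\!\!\int_D f(u)u\,dx\,d\tau + \int_s^t (u,\Phi\,dW)_2 + \tfrac12\int_s^t \|\Phi\|_{\mathrm{HS}}^2\,d\tau
\end{align*}
for $0 \leq s \leq t \leq T$. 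Fixing $s = t_0$ and taking $t = t_m \downarrow t_0$, the right-hand side is continuous in $t$ and tends to $\tfrac12\|u(t_0)\|_2^2$, so $\limsup_{t_m \downarrow t_0}\|u(t_m)\|_2^2 \leq \|u(t_0)\|_2^2$; combined with the weak lower semicontinuity $\liminf\|u(t_m)\|_2^2 \geq \|u(t_0)\|_2^2$ this yields $\|u(t_m)\|_2 \to \|u(t_0)\|_2$, and hence the desired strong right-continuity in $L^2(D)$.
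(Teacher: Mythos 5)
Your overall architecture mirrors the paper's: extract a monotone-in-time measure from the limit equation tested against spatial test functions, deduce existence of weak one-sided limits from the uniform $L^2$-bound, and upgrade to strong right-continuity in $L^2(D)$ via the energy inequality inherited from \eqref{4}. The derivation of that inequality (keeping the coercive term via Fatou is even slightly sharper than the paper, which simply discards it up to $(t-s)\Vert\kappa\Vert_{L^1(D)}$) and the compactness argument for strong c\`adl\`ag-ness in $W^{-1,p'}(D)$ are fine. But there is a genuine gap at the crux of the last step. You invoke ``weak lower semicontinuity'' to claim $\liminf_{t_m\downarrow t_0}\Vert u(t_m)\Vert_2^2\geq\Vert u(t_0)\Vert_2^2$. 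Weak lower semicontinuity only gives $\liminf_m\Vert u(t_m)\Vert_2\geq\Vert u_{t_0^+}\Vert_2$, where $u_{t_0^+}$ is the weak $L^2(D)$-limit of $u(t_m)$; to compare this with $\Vert u(t_0)\Vert_2$ you would need $u_{t_0^+}=u(t_0)$ --- which is exactly the right-continuity you are trying to prove. Your earlier identification step does not supply it: the distributional identity $H_\psi'=G_\psi+\nu_\psi$ pins down $H_\psi$ only up to a Lebesgue-null set of times, and the portmanteau theorem identifies $\lim_n n\int_{(0,t]\times D}u_n^-\psi\,d(s,x)$ with $\nu_\psi((0,t])$ only at continuity points of $t\mapsto\nu_\psi((0,t])$. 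Since $\eta$ may charge time slices $\{t_0\}\times A$ with $|A|>0$ (only sets of zero parabolic capacity are excluded, and $\widetilde{\capa}_p(\{t_0\}\times A)=0$ forces $|A|=0$), at such an atom the value $\int_D u(t_0)\psi\,dx$ is a priori only squeezed between the left limit and the right-continuous value of the monotone function, so right-continuity at $t_0$ is not automatic and your appeal to it in the energy step is circular at precisely the times where something could go wrong.

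The paper closes this gap with an ordering argument for which you already have all the ingredients: the monotonicity of the time-marginals gives $u_{t_0^+}-u(t_0)=\eta_{t_0^+}-\eta_{t_0}\geq 0$, and $u(t_0)\geq 0$ a.e., hence $\Vert u(t_0)\Vert_2\leq\Vert u_{t_0^+}\Vert_2$. Combined with your $\limsup_m\Vert u(t_m)\Vert_2^2\leq\Vert u(t_0)\Vert_2^2$ and weak lower semicontinuity, this forces $\Vert u_{t_0^+}\Vert_2=\Vert u(t_0)\Vert_2=\lim_m\Vert u(t_m)\Vert_2$; then $\Vert u_{t_0^+}-u(t_0)\Vert_2^2=2\Vert u(t_0)\Vert_2^2-2(u_{t_0^+},u(t_0))_2\leq 0$, because $(u_{t_0^+},u(t_0))_2\geq(u(t_0),u(t_0))_2$ by the same ordering together with $u(t_0)\geq 0$. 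Only after this does one obtain $u_{t_0^+}=u(t_0)$, norm convergence, and hence strong right-continuity. With this insertion your argument becomes essentially the paper's proof.
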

\begin{proof}
A similar argumentation can be found in \cite[Proposition 3.4]{RWZ13}. $\mathds{P}$-a.s. we have for every $t \in [0,T]$
\begin{align*}
u_n(t)- u_0 - \int_0^t \Phi \, dW -\int_0^t \operatorname{div} a(x, u_n, \nabla u_n) \, ds + \int_0^t f(u_n) \, ds = n \int_0^t u_n^- \, ds.
\end{align*}
Testing with $\psi \in \mathcal{D}(D)$ and passing to the limit with $n \to \infty$ yields 
\begin{align}\label{eq 230224_03}
&\lim_{n\to\infty}n \int_D \psi \int_0^t u_n^- \, ds \ dx 
\\=& 
\int_D \psi \left( u(t)- u_0 - \int_0^t \Phi \, dW -\int_0^t \operatorname{div} a(x, u, \nabla u) \, ds + \int_0^t f(u) \, ds\right)dx \nonumber,
\end{align}
and since $n\int_0^t u_n^-ds\geq 0$, it converges in $\mathcal{D}'(D)$ to a non-negative Radon measure on $D$ denoted $\eta_t$. Obviously, for $0\leq s \leq t \leq T$ we have $0 \leq \eta_s \leq \eta_t  \leq \eta_T$ in $\mathcal{D}'(D)$. Especially, $(\eta_t)_{t \in [0,T]}$ is a non-decreasing sequence of Radon measures on $D$. Next, we show that for any $t \in [0,T)$, $t \mapsto \eta_t \in (\mathcal{C}_0(D))'$ has a right limit $\eta_{t^+}$, and, for $t \in (0,T]$, a left limit $\eta_{t^-}$ respectively, in $(\mathcal{C}_0(D))'$. In particular, $\eta_{t^-} \leq \eta_t$ for all $t \in (0,T]$ and $\eta_t \leq \eta_{t^+}$ for all $t \in [0,T)$.
Let $\psi \in \mathcal{C}_0(D)$. Then $\psi = \psi^+ - \psi^-$, where $\psi^+, \psi^- \in \mathcal{C}_0(D)^+$. Hence, $t \mapsto \eta_t(\psi^+)$ and $t \mapsto \eta_t(\psi^-)$ are non-decreasing. Therefore the limits
\begin{align*}
\lim\limits_{s \to t^+} \eta_s(\psi^+), ~~\lim\limits_{s \to t^+} \eta_s(\psi^-)
\end{align*}
as well as
\begin{align*}
\lim\limits_{s \to t^-} \eta_s(\psi^+), ~~\lim\limits_{s \to t^-} \eta_s(\psi^-)
\end{align*}
exist in $\R$, since they are bounded by $\eta_T(\psi^+)$ and $\eta_T(\psi^-)$, respectively. This yields the existence of the limits
\begin{align*}
\lim\limits_{s \to t^+} \eta_s(\psi) &= \lim\limits_{s \to t^+} \big(\eta_s(\psi^+)-\eta_s(\psi^-)\big) =: \eta_{t^+}(\psi), \\
\lim\limits_{s \to t^-} \eta_s(\psi)&= \lim\limits_{s \to t^-} \big(\eta_s(\psi^+)-\eta_s(\psi^-)\big) =: \eta_{t^-}(\psi).
\end{align*}
Moreover, we have
\begin{align*}
\eta_{t^+} &= \lim\limits_{l \to 0^+} \eta_{t+l} \geq \eta_t ~~~\text{and} \\
\eta_{t^-} &= \lim\limits_{l \to 0^+} \eta_{t-l} \leq \eta_t
\end{align*}
in $(\mathcal{C}_0(D))'$. Equality \eqref{eq 230224_03} yields that for $t \in [0,T)$ there exists a right limit and for $t \in (0,T]$ a left limit for $u(t)$ in $(\mathcal{C}_0(D))'$ and we have for any $\psi \in \mathcal{C}_0(D)$
\begin{align}\label{lim1}
\lim_{s \to t^+}\int_D u(s) \psi \, dx = \eta_{t^+}(\psi) - \eta_t(\psi) + \int_D u(t)  \psi \, dx
\end{align}
for any $t \in [0,T)$ and
\begin{align}\label{lim2}
\lim_{s \to t^-}\int_D u(s) \psi \, dx = \eta_{t^-}(\psi) - \eta_t(\psi) + \int_D u(t) \psi \, dx
\end{align}
for any $t \in (0,T]$. Therefore we may define
	\[u_{t^+}:=\eta_{t^+}-\eta_t+u(t), \quad u_{t^-}:=\eta_{t^-}-\eta_t+u(t)\]
in $(\mathcal{C}_0(D))'$.
Let us notice that thanks to Lemma \ref{Lemma 220724_01}, 
\[\omega \mapsto X(\omega):=\sup\limits_{t \in [0,T]} \Vert u(\omega,t) \Vert_{L^2(D)}\] 
is an integrable random variable and, for almost every fixed $\omega$ and all $t\in [0,T]$,  
\[\Vert u(\omega,t) \Vert_{L^2(D)} \leq X(\omega)< \infty.\]
As a consequence of the limit \eqref{lim1}, for fixed $t \in [0,T)$, and any sequence $(s_n)_n$ such that $s_n\geq t$ for all $n\in\mathbb{N}$ converging to $t$,  $(u(s_n))_n$ converges weakly to $u_{t^+}$ in $(\mathcal{C}_0(D))'$. Then, the uniform boundedness in time of $(u(s_n))_n$ in $L^2(D)$ and the density of $\mathcal{C}_0(D)$ in $L^2(D)$ yield $u(s_n)\rightharpoonup u_{t^+}$ in $L^2(D)$ as $n\rightarrow \infty$ and consequently, $\lim_{s\rightarrow t^+} u(s)=u_{t^+}$ weakly in $L^2(D)$. Moreover, we have $u_{t^+} \geq u(t)$ a.e in $D$. Finally, since $\mathds{P}$-a.s. $u \geq 0$ a.e. in $Q_T$, there exists a set $A \subset (0,T)$ of full measure such that $u(t) \geq 0$ a.e. in $D$ for all $t \in A$, where the exceptional set in $D$ may depend on $t\in A$.
Therefore, by selecting a sequence $(s_n)_n\subset A$ converging to $t^+$, one concludes $u_{t^+} \geq 0$ a.e. in $D$. For any $t \in (0,T]$, in a similar way, there exists $u_{t^-}\in L^2(D)$, with  $0 \leq u_{t^-} \leq u(t)$ a.e in $D$, such that $u(s)\rightharpoonup u_{t^-}$ in $L^2(D)$ as $s \to t^-$. Then, using \eqref{4}, more precisely, the difference of that equation at time $s$ to the one at time $t$ for $0 \leq s \leq t \leq T$, one gets, recalling $u_n(0)=u_0$ for all $n\in\mathbb{N}$,
\begin{align*}
&\frac{1}{2} \Vert u_n(t) \Vert_2^2 - \frac{1}{2} \Vert u_n(s) \Vert_2^2 + \int_s^t \int_D a(x, u_n, \nabla u_n) \nabla u_n \, dx \, dr + n \int_s^t \int_D |u_n^-|^2 \, dx \, dr \\
&= -\int_s^t \int_D f(u_n)u_n \, dx \, dr + \int_s^t (u_n , \Phi \, dW)_2 + \frac{1}{2}\int_s^t \Vert \Phi\Vert^2_{\operatorname{HS}} \,dr.
\end{align*}
Passing to the limit with $n \to \infty$ and discarding non-negative terms yield:
\begin{align}\label{eq 230224_04}
	\begin{aligned}
&\frac{1}{2} \Vert u(t) \Vert_2^2 - \frac{1}{2} \Vert u(s) \Vert_2^2\\
& \leq -\int_s^t \int_D f(u)u \, dx \, dr + \int_s^t (u , \Phi \, dW)_2 + \frac{1}{2}\int_s^t \Vert \Phi\Vert^2_{\operatorname{HS}} \,dr +(t-s)\Vert	\kappa\Vert_{L^1(D)},
	\end{aligned}
\end{align}
where we have to replace $u(s)$ by $u_0$ in the case $s=0$.
Using the lower semicontinuity of the norm for the weak convergence, replacing $t$ by $t+l$ and $s$ by $t$, \eqref{eq 230224_04} and $0 \leq u(t) \leq u_{t^+}$ yield, for any $t \in [0,T)$,
\begin{align}\label{eq 260224_01}
\Vert u_{t^+} \Vert_2^2 \leq \liminf\limits_{l \to 0^+} \Vert u(t+l) \Vert_2^2  \leq \limsup\limits_{l \to 0^+} \Vert u(t+l) \Vert_2^2  \leq \Vert u(t) \Vert_2^2  \leq \Vert u_{t^+} \Vert_2^2,
\end{align}
where, again, we have to replace $u(t)$ by $u_0\geq 0$ in the case $t=0$. Since by \eqref{eq 260224_01} it follows for $t \in [0,T)$
\begin{align*}
&\Vert u_{t^+} - u(t) \Vert_2^2 = \Vert u_{t^+} \Vert_2^2 - 2 (u_{t^+}, u(t))_2 + \Vert u(t) \Vert_2^2 \\
&= 2\Vert u(t) \Vert_2^2 - 2 (u_{t^+}, u(t))_2 \leq 2\Vert u(t) \Vert_2^2 - 2 (u(t), u(t))_2 =0,
\end{align*}
one has that $u_{t^+}=u(t)$ and $\lim_{s \to t^+} u(s)=u(t)$ in $L^2(D)$. In particular, $\lim_{s \to 0^+} u(s)=u_0$ in $L^2(D)$.
\end{proof}
\begin{corollary}\label{Corollary 120724_01}
The measure $\eta$ does not charge sets of zero capacity, i.e., for any $E \in \mathcal{B}(Q_T)$ with $\capa_p(E) =0$ we have $\eta(E)=0$.
\end{corollary}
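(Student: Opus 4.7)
\medskip

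\noindent\textbf{Proof plan for Corollary \ref{Corollary 120724_01}.}

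The plan is to reduce the assertion, via the localization $\eta^{\varphi}=\varphi\cdot\eta$ introduced in Lemma \ref{narrowconvergence_241011} and Remark \ref{Rem4.10}, to the statement that a non-negative bounded Radon measure belonging to $\mathcal{W}'$ (a so-called soft measure) does not charge sets of zero parabolic $p$-capacity. The latter is the main content of the parabolic capacity theory set up in the Appendix and should be invoked here. Throughout, we work $\mathds{P}$-a.s.\ on the full measure set $\widetilde\Omega$ on which $\eta$ is a non-negative Radon measure on $Q_T$ and $\eta\in\mathcal{W}'$.

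First, I would fix any non-negative $\varphi\in\mathcal{C}_c^\infty(Q_T)$ and consider $\eta^{\varphi}$ defined by $\langle\eta^\varphi,v\rangle_{\mathcal{W}',\mathcal{W}}=\langle\eta,v\varphi\rangle_{\mathcal{W}',\mathcal{W}}$. By Remark \ref{Rem4.10}, $\eta^\varphi$ is simultaneously a bounded non-negative Radon measure on $Q_T$ (with compact support contained in $\operatorname{supp}\varphi$) and an element of $\mathcal{W}'$. Hence $\eta^\varphi$ is a soft measure in the sense of parabolic capacity theory. Applying the Appendix result which asserts that such soft measures are absolutely continuous with respect to $\capa_p$, we obtain $\eta^\varphi(E)=0$ for every Borel set $E\subset Q_T$ with $\capa_p(E)=0$.

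Next, I would transfer this to $\eta$ by exhaustion. Pick a sequence of compacts $K_m\subset Q_T$ with $K_m\nearrow Q_T$ and, for each $m$, a cut-off function $0\le\varphi_m\in\mathcal{C}_c^\infty(Q_T)$ with $\varphi_m\equiv 1$ on $K_m$. For any Borel set $E\subset Q_T$ with $\capa_p(E)=0$, the set $E\cap K_m$ also has zero $p$-capacity by monotonicity, so by the previous step
\[
\eta(E\cap K_m)=\int_{E\cap K_m}\varphi_m\,d\eta=\eta^{\varphi_m}(E\cap K_m)=0.
\]
Letting $m\to\infty$ and using the monotone convergence (or the inner regularity of the Radon measure $\eta$) gives $\eta(E)=0$, which is the desired conclusion.

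The main (indeed only) obstacle is the application of the Appendix result characterizing soft measures, i.e.\ the fact that a non-negative Radon measure in $\mathcal{W}'$ does not charge $\capa_p$-null sets; this is the content of parabolic capacity theory à la Pierre/Droniou–Porretta–Prignet, which is developed in Section \ref{A}. Once that is in place, the present corollary is a straightforward localization argument and does not require further analysis of $\eta$ itself.
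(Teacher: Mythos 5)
Your outline has the right shape (use the fact that $\eta$ is simultaneously a non-negative Radon measure and an element of $\mathcal{W}'$, then conclude by inner regularity/exhaustion), but the entire mathematical content of the corollary is concentrated in the one step you outsource: the claim that a non-negative Radon measure belonging to $\mathcal{W}'$ does not charge $\capa_p$-null sets. You attribute this to ``the Appendix result characterizing soft measures,'' but no such result appears in Section \ref{A} of this paper --- the Appendix only develops the capacity, quasi-continuity, the extension lemmas, and the comparison of capacities on $(-T,2T)\times D$. So as written, your argument reduces the corollary to (a localized version of) itself. The paper instead proves this crux directly, and the proof is not entirely routine: for a compact $K$ with $\capa_p(K)=0$ one cannot simply take $\psi\in\mathcal{W}$ with $\psi\geq\mathds{1}_K$ \emph{a.e.} and small norm and write $\eta(K)\leq\int\psi\,d\eta$, because $\psi$ is only defined up to Lebesgue-null sets while $\eta$ may be singular with respect to Lebesgue measure. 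The paper circumvents this by passing to the auxiliary capacity $\mathrm{CAP}$ of \cite{DPP}, which uses smooth test functions $\psi\in\mathcal{C}_c^{\infty}([0,T]\times D)$ dominating $\mathds{1}_K$ \emph{pointwise}, invoking the equivalence $\capa_p(K)=0\Leftrightarrow\mathrm{CAP}(K)=0$ on compacts (\cite[Proposition 2.14]{DPP}), multiplying by a cut-off $\theta\geq\mathds{1}_K$ to land in $\mathcal{C}_c^\infty(Q_T)$, and then estimating $\eta(K)\leq\langle\eta,\theta\psi\rangle_{\mathcal{W}',\mathcal{W}}\leq C(\theta)\Vert\eta\Vert_{\mathcal{W}'}\Vert\psi\Vert_{\mathcal{W}}$. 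This pointwise-versus-a.e.\ issue is precisely the obstacle your proposal does not address.

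Two further remarks. First, your localization through $\eta^{\varphi}$ is unnecessary for this corollary: the paper works directly with $\eta\in\mathcal{W}'$ and compact sets (on which $\eta$ is automatically finite), and then uses inner regularity of the Radon measure to pass to general Borel sets of zero capacity --- exactly your final exhaustion step, but without the cut-offs. Second, the statement you want to cite is indeed true and standard in the Droniou--Porretta--Prignet framework, so your plan could be completed by importing and proving that result; but in the context of this paper the direct duality argument is shorter and is what the authors do.
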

\begin{proof}
Since the result is obvious if $\eta(\omega)=0$, one considers now $\eta(\omega)\neq0$. First, we prove the result for $K \subset Q_T$ compact: Let $K \subset Q_T$ compact with $\capa_p(K)=0$ and $\alpha >0$. Then, by \cite[Proposition 2.14]{DPP} we have CAP$(K)=0$, where CAP is defined as in \cite[Definition 2.10]{DPP}. Now, let $\theta \in \mathcal{C}_c^{\infty}(Q_T)$ such that $\theta \geq \mathds{1}_K$. According to \cite[Remark 2.3]{DPP} there exists $C(\theta)>0$ such that $\Vert \theta \psi \Vert_{\W} \leq C(\theta) \Vert \psi \Vert_{\W}$ for all $\psi \in \W$. By definition of CAP and since CAP$(K)=0$, there exists $\psi \in \mathcal{C}_c^{\infty}([0,T] \times D)$ with $\psi \geq \mathds{1}_K$ such that $\Vert \psi \Vert_{\W} \leq \frac{\alpha}{C(\theta) \Vert \eta \Vert_{\W'}}.$ Then, $\theta \psi \in \mathcal{C}_c^{\infty}(Q_T)$ with $\theta \psi \geq \mathds{1}_K$ and
\begin{equation*}
\eta(K) = \int_{K} 1 \ d\eta = \int_{Q_T} \mathds{1}_K \ d\eta \leq \int_{Q_T} \theta \psi \ d\eta = \langle \eta, \theta \psi \rangle_{\W', \W} \leq \Vert \eta \Vert_{W'} \Vert \theta \psi \Vert_{\W} \leq \alpha.
\end{equation*}
Since $\alpha >0$ was arbitrary chosen, we obtain $\eta(K)=0$. Now let $E \in \mathcal{B}(Q_T)$ with $\capa_p(E)=0$ be arbitrary. Then, for any $K \subset E$ compact we have $\capa_p(K)=0$ and therefore $\eta(K)=0$. Since $\eta$ is a Radon-measure, we have
\begin{equation*}
\eta(E) = \sup \{ \eta(K); ~K \subset E ~\text{compact} \} =0.
\end{equation*}
\end{proof}
\begin{lemma}\label{Lemma 261124_03}
The sequence $(\eta_n^{\varphi})_n$ is equidiffuse, i.e., for all
$\alpha>0$ there exists $\beta>0$ such that for all $E \in \mathcal{B}(Q_T)$ we have 
\[\capa_p(E) < \beta ~\Rightarrow \sup\limits_{n \in \N}\eta_n^{\varphi}(E) < \alpha.\]
\end{lemma}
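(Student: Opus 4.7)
The plan is to quantify the reasoning used in Corollary \ref{Corollary 120724_01}, transforming the qualitative statement ``$\eta$ does not charge polar sets'' into a uniform-in-$n$ bound. The crucial ingredient is a pathwise uniform bound $\|n u_n^-\|_{\mathcal{W}'} \leq \frak{C}(\omega)$ with $\frak{C} < +\infty$ $\mathds{P}$-a.s., which is obtained by plugging the pathwise estimates of Remark \ref{Remark 120724_01} into the chain of inequalities \eqref{240809_01}--\eqref{240809_03} from the proof of Lemma \ref{lem-cv-reflected}. Since multiplication by the fixed cut-off $\varphi \in \mathcal{C}_c^\infty(Q_T)$ is a continuous endomorphism of $\mathcal{W}$ with operator norm $C(\varphi)$ depending only on $\varphi$, dualising yields the uniform bound $\|\eta_n^\varphi\|_{\mathcal{W}'} \leq C(\varphi)\frak{C}(\omega)$ for every $n \in \mathbb{N}$.

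Given $\alpha > 0$, I would then fix $E \in \mathcal{B}(Q_T)$ with $\capa_p(E) < \beta$ (with $\beta > 0$ to be chosen). By the definition of $\capa_p$ through $\mathcal{W}$-capacitary potentials (as used implicitly in the proof of Corollary \ref{Corollary 120724_01} via \cite{DPP}), one can select an open set $U \supset E$ and a function $\psi \in \mathcal{W}^+ \cap L^\infty(Q_T)$ (truncating at $1$ if needed) satisfying $\psi \geq \mathds{1}_U$ a.e.\ and $\|\psi\|_{\mathcal{W}} \leq 2\beta$. Using that $\eta_n^\varphi = n u_n^- \varphi$ is a non-negative $L^1$-density (Lemma \ref{Lemma 230224_01}) whose action as an element of $\mathcal{W}'$ agrees with integration against the density when tested against bounded elements of $\mathcal{W}$, one obtains
\begin{equation*}
\eta_n^\varphi(E) \leq \eta_n^\varphi(U) = \int_U n u_n^- \varphi \, d(t,x) \leq \int_{Q_T} \psi\, n u_n^- \varphi \, d(t,x) = \langle \eta_n^\varphi, \psi \rangle_{\mathcal{W}',\mathcal{W}} \leq 2 C(\varphi)\frak{C}(\omega)\,\beta.
\end{equation*}
Choosing $\beta := \alpha/(2C(\varphi)\frak{C}(\omega)+1)$ then yields $\sup_n \eta_n^\varphi(E) < \alpha$, as required.

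The main technical point is the identification of the $\mathcal{W}'$-pairing $\langle \eta_n^\varphi, \psi\rangle_{\mathcal{W}',\mathcal{W}}$ with the integral $\int_{Q_T} \psi\, n u_n^- \varphi \, d(t,x)$ for the $\mathcal{W}^+\cap L^\infty$ potential $\psi$ produced by the capacity definition. This holds trivially for $\psi \in \mathcal{C}_c^\infty(Q_T)$ by construction of $\eta_n^\varphi$ in Lemma \ref{narrowconvergence_241011}, and it extends to the bounded capacitary potential by approximating $\psi$ by smooth compactly supported functions in $\mathcal{W}$ and applying dominated convergence, which is licit thanks to the pathwise $L^1$-bound on $n u_n^- \varphi$ from Lemma \ref{Lemma 230224_01}. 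A secondary point is to verify that the near-optimal bounded capacitary potential with $\|\psi\|_{\mathcal{W}}$ within a prescribed margin of $\capa_p(E)$ can be constructed for Borel $E$: this is standard in the DPP framework adopted in the Appendix, combining the inner regularity of capacity with respect to open (or compact) sets with the possibility of truncating $\mathcal{W}$-potentials at $1$ without increasing their norm.
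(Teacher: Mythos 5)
Your proof is correct and follows essentially the same route as the paper's: a uniform (pathwise) bound on $\Vert \eta_n^{\varphi}\Vert_{\mathcal{W}'}$, an open set $U\supset E$ with a capacitary potential $\psi\geq\mathds{1}_U$ of small $\mathcal{W}$-norm, and the chain $\eta_n^{\varphi}(E)\leq\int_{Q_T}\psi\, nu_n^-\varphi\,d(t,x)=\langle\eta_n^{\varphi},\psi\rangle_{\mathcal{W}',\mathcal{W}}\leq\Vert\eta_n^{\varphi}\Vert_{\mathcal{W}'}\Vert\psi\Vert_{\mathcal{W}}$. The only superfluous step is your truncation of $\psi$ at $1$ and the subsequent approximation argument: since $nu_n^-\varphi$ is a non-negative $L^{p'}(Q_T)$ density and $\langle nu_n^-,\xi\rangle_{\mathcal{W}',\mathcal{W}}=\int_{Q_T}nu_n^-\xi\,d(t,x)$ for every $\xi\in\mathcal{W}$ by the very construction in Lemma \ref{lem-cv-reflected}, the identification of the pairing with the integral needs no boundedness of $\psi$ (and truncation in $\mathcal{W}$ is in any case delicate because of the time-derivative term, so it is better avoided).
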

\begin{proof}
Let $\alpha >0$. Since the result holds obviously if $\sup\limits_{n \in \N}\Vert \eta_n^{\varphi} \Vert_{\mathcal{W}'}=0$, let us assume it is positive and choose $\beta = \frac{\alpha}{3 \sup\limits_{n \in \N}\Vert \eta_n^{\varphi} \Vert_{\mathcal{W}'}}$. Now let $E \in \mathcal{B}(Q_T)$ such that $\capa_p(E) < \beta$. By definition of the parabolic capacity (see Appendix, Section \ref{A}) there exists an open set $U \subset Q_T$ such that $E \subset U$ and $\capa_p(U) < 2\beta$. There exists $\psi \in \mathcal{W}$ such that $\psi \geq \mathds{1}_{U}$ and $\Vert \psi \Vert_{\mathcal{W}} < 3 \beta$. Now, for any $n \in \N$,
\begin{align*}
\eta_n^{\varphi}(E) \leq \eta_n^{\varphi}(U)\leq \int_{Q_T}\psi nu_n^- \varphi \, d(t,x) = \langle \eta_n^{\varphi}, \psi \rangle_{\mathcal{W}', \mathcal{W}} \leq \Vert \eta_n^{\varphi} \Vert_{\mathcal{W}'} \Vert \psi \Vert_{\mathcal{W}} < 3 \beta \Vert \eta \Vert_{\mathcal{W}'} = \alpha.
\end{align*}
\end{proof}
\begin{remark}\label{diffuse}
Therefore, $\eta^\varphi$ satisfies a similar property. Indeed, for $\alpha>0$ and $\beta= \frac{\alpha}{6 \sup\limits_{n \in \N}\Vert \eta_n^{\varphi} \Vert_{\mathcal{W}'}}$  one gets, using the same arguments as in Lemma \ref{Lemma 261124_03}, that for any Borel set $E$ with $\capa_p(E) < \beta$, there exists an open set $U$ such that $E \subset U$ and $\capa_p(U) < 2\beta$. Then \cite[Proposition 4.2.3]{ABM05} yields, $\eta^\varphi(E)\leq \eta^\varphi(U) \leq \liminf_n \eta_n^\varphi(U) \leq \alpha/2 < \alpha.$
\end{remark}

\begin{remark}
		From Corollary \ref{Corollary 120724_01} it follows that $\capa_p$-quasi continuous functions are measurable with respect to $\eta$ and $\eta^{\varphi}$. Moreover, $\capa_p$-quasi continuous representatives coincide quasi everywhere, hence $\eta$-a.e. and therefore also $\eta^{\varphi}$-a.e. (see Remark \ref{Rem4.10}). 
		\end{remark}
\begin{lemma}\label{Lemma 261124_05}
For each $m \in \N$ and $K>0$ we have
\begin{equation*}
\int_{Q_T} T_K(u_m^+) \ d\eta^{\varphi} \leq \liminf\limits_{n \to \infty} \int_{Q_T} T_K(u_m^+) \ d\eta_n^{\varphi},
\end{equation*}
where for each $r \in \R$, $T_K(r) := \max(-K, \min(r,K))$.
\end{lemma}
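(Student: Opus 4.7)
My plan is to pass to the limit in $\int T_K(u_m^+)\, d\eta_n^{\varphi}$ via the narrow convergence established in Lemma~\ref{narrowconvergence_241011}, but the integrand is not continuous on $Q_T$. To get around this, I would exploit the fact that $u_m$ admits a $\capa_p$-quasi continuous representative (Remark~\ref{241211_rem01}), so $T_K(u_m^+)$ inherits quasi continuity after composition with the Lipschitz map $r\mapsto T_K(r^+)$. I would then combine this quasi continuity with the uniform equidiffuseness of $(\eta_n^{\varphi})_n$ from Lemma~\ref{Lemma 261124_03} and its $\eta^{\varphi}$-analogue in Remark~\ref{diffuse}, in order to replace $T_K(u_m^+)$ by a bounded continuous function up to an error that is small uniformly in $n$.

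Concretely, given $\varepsilon>0$, I would select $\beta>0$ from the equidiffuseness statement such that every Borel $E\subset Q_T$ with $\capa_p(E)<\beta$ satisfies $\sup_n\eta_n^{\varphi}(E)<\varepsilon$ and $\eta^{\varphi}(E)\leq\varepsilon$ simultaneously. By quasi continuity, pick an open $U\subset Q_T$ with $\capa_p(U)<\beta$ on whose complement $T_K(u_m^+)$ is continuous. Since $Q_T\setminus U$ is closed in $Q_T$ and the restriction takes values in $[0,K]$, Tietze's extension theorem yields a continuous $g\colon Q_T\to[0,K]$ that agrees with $T_K(u_m^+)$ on $Q_T\setminus U$.

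Writing $T_K(u_m^+)=g+(T_K(u_m^+)-g)\mathds{1}_U$, the difference $(T_K(u_m^+)-g)\mathds{1}_U$ is bounded by $K$ in absolute value and vanishes outside $U$, so its integral against either $\eta_n^{\varphi}$ or $\eta^{\varphi}$ has modulus at most $K\varepsilon$ by the choice of $\beta$. Since $g$ is bounded and continuous on $Q_T$, the narrow convergence from Lemma~\ref{narrowconvergence_241011} yields $\int g\, d\eta_n^{\varphi}\to\int g\, d\eta^{\varphi}$, and hence
\[
\liminf_{n\to\infty}\int_{Q_T}T_K(u_m^+)\, d\eta_n^{\varphi}\geq \int_{Q_T} g\, d\eta^{\varphi}-K\varepsilon\geq \int_{Q_T}T_K(u_m^+)\, d\eta^{\varphi}-2K\varepsilon.
\]
Letting $\varepsilon\to 0^+$ closes the argument.

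The point that needs a little care, rather than being a deep obstacle, is to ensure that the two integrals of $T_K(u_m^+)$ are unambiguous, i.e.\ independent of the chosen representative. For $\eta^{\varphi}$ this follows from Corollary~\ref{Corollary 120724_01} (and Remark~\ref{Rem4.10}), since $\eta$ and hence $\eta^{\varphi}$ do not charge $\capa_p$-null sets and $\capa_p$-quasi continuous representatives coincide quasi everywhere; for $\eta_n^{\varphi}=nu_n^{-}\varphi\, dt\, dx$ it follows from absolute continuity with respect to the Lebesgue measure on $Q_T$, which assigns measure zero to any $\capa_p$-null set.
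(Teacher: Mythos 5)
Your proposal is correct and follows essentially the same strategy as the paper's proof: replace $T_K(u_m^+)$ by a bounded continuous function via Tietze's theorem off a set of small capacity, control the error uniformly in $n$ by the equidiffuseness of $(\eta_n^{\varphi})_n$ and the corresponding property of $\eta^{\varphi}$, and pass to the limit in the continuous part by narrow convergence. The only (harmless) difference is that you apply Tietze directly to the relatively closed set $Q_T\setminus U$ in the normal space $Q_T$, whereas the paper first extends the quasi continuous representative to compact subsets of $\overline{Q_T}$ via Lemma \ref{Lemma 271124_01}; since narrow convergence is tested against $\mathcal{C}_b(Q_T)$, your shortcut is legitimate.
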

\begin{proof}
For fixed $m \in \N$ and $K>0$, let us set $v:= T_K(u_m^+)$. We know that $u_m \in \W + \mathcal{C}_{0,D}$, where
\begin{equation*}
\mathcal{C}_{0,D} := \{\psi \in \mathcal{C}([0,T] \times \overline{D}); ~\psi(t,x)=0 ~\text{for all}~(t,x) \in [0,T] \times \partial D\}.
\end{equation*}
Lemma \ref{Lemma 271124_01} yields that there exists a not relabeled extension of $u_m$ to $\overline{Q_T}$ such that $u_m(t,x)=0$ for all $(t,x) \in [0,T] \times \partial D$. For each $l \in \N$ there exists $\widehat{F}_l \subset \overline{Q_T}$ compact such that ${u_m} _{|{\widehat{F}_l}}$ is continuous and $\capa_p(Q_T \setminus \widehat{F}_l) \to 0$ as $l \to \infty$. Since $r \mapsto T_K(r^+)$ is continuous, the same properties apply to $v$. Setting $F_l := \widehat{F}_l \cap Q_T$, we have
\begin{equation*}
\capa_p(Q_T \setminus F_l)= \capa_p(Q_T \setminus \widehat{F}_l)  \to 0 \text{ as } l \to \infty.
\end{equation*}
Consequently, Lemma \ref{Lemma 261124_03} yields that $\sup\limits_{n \in \N} \eta_n^{\varphi}(Q_T \setminus F_l) \to 0$ as $l \to \infty$  and Remark \ref{diffuse}  yields that $\eta^{\varphi}(Q_T \setminus F_l) \to 0$ as $l \to \infty$.\\
Thanks to Tietze-Urysohn theorem (see \cite[Theorem 7.3.8]{M74}) for each $l \in \N$ we can find $v_l \in \mathcal{C}(\overline{Q_T}) \subset \mathcal{C}_b(Q_T)$ such that ${v_l}_{|\widehat{F}_l} = v_{|\widehat{F}_l}$ and $0 \leq v_l \leq K$ on $\overline{Q_T}$. Especially we have ${v_l}_{|F_l} = v_{|F_l}$. Now, using Lemma \ref{narrowconvergence_241011} we obtain
\begin{align*}
\int_{Q_T} v \ d\eta^{\varphi} &= \int_{F_l} v \ d\eta^{\varphi} + \int_{Q_T \setminus F_l} v \ d\eta^{\varphi} \leq \int_{F_l} v_l \ d\eta^{\varphi} +K \cdot \eta^{\varphi}(Q_T \setminus F_l) \\
&\leq \int_{Q_T} v_l \ d\eta^{\varphi} +K \cdot \eta^{\varphi}(Q_T \setminus F_l) \\
&= \lim\limits_{n \to \infty} \int_{Q_T} v_l \ d\eta_n^{\varphi} +K \cdot \eta^{\varphi}(Q_T \setminus F_l) \\
&= \lim\limits_{n \to \infty} \bigg[ \int_{F_l} v_l \ d\eta_n^{\varphi} + \int_{Q_T \setminus F_l} v_l \ d\eta_n^{\varphi} \bigg] +K \cdot \eta^{\varphi}(Q_T \setminus F_l) \\
&\leq \liminf\limits_{n \to \infty} \int_{F_l} v \ d\eta_n^{\varphi} +K \cdot \big( \sup\limits_{n \in \N} \eta_n^{\varphi}(Q_T \setminus F_l) +  \eta^{\varphi}(Q_T \setminus F_l)\big)\\
&\leq \liminf\limits_{n \to \infty} \int_{Q_T} v \ d\eta_n^{\varphi} +K \cdot \big( \sup\limits_{n \in \N} \eta_n^{\varphi}(Q_T \setminus F_l) +  \eta^{\varphi}(Q_T \setminus F_l)\big) \\
\end{align*}
Since $K \cdot \big( \sup\limits_{n \in \N} \eta_n^{\varphi}(Q_T \setminus F_l) + \eta^{\varphi}(Q_T \setminus F_l)\big) \to 0$ as $l \to \infty$, we obtain the required result.
\end{proof}
Recalling that $\bar{u}=\sup\operatorname{quasi \ ess}_{n\in\mathbb{N}}u_n$ 
is a q.e. representative of $u$ and $u\geq 0$ a.e., we may deduce that 
\[\hat{u}:=\bar{u}^+\]
is also a q.e. defined representative of $u$ which is non-negative everywhere in $Q_T$.
\begin{proposition}\label{Proposition 120724_01}
For a set of full measure still denoted by $\widetilde\Omega$, we have $\hat{u}=0$ $\eta$-a.e. in $Q_T$. Especially, we have $ \int_{Q_T} \hat{u} \, d\eta = 0$.
\end{proposition}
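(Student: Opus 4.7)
The plan is to combine the monotone ordering $u_m \leq u_n$ q.e.\ for $m \leq n$ (Lemma \ref{220805_lem01}) with the lower-semicontinuity result of Lemma \ref{Lemma 261124_05} and the $\capa_p$-regularity of $\eta$ from Corollary \ref{Corollary 120724_01} and Remark \ref{Rem4.10}. The pivotal pointwise observation is that for $m \leq n$, wherever $u_n^- > 0$ one has $u_n < 0$ and hence $u_m \leq u_n < 0$, forcing $u_m^+ = 0$. Therefore, q.e.\ in $Q_T$ (and a fortiori $dt\otimes dx$-a.e., since $\capa_p$-null sets are Lebesgue-null), one has $T_K(u_m^+)\, u_n^- = 0$ for every $K>0$ and every $m \le n$, so that
\[
\int_{Q_T} T_K(u_m^+)\, d\eta_n^\varphi \;=\; n\int_{Q_T} T_K(u_m^+)\, u_n^-\,\varphi\, d(t,x) \;=\; 0.
\]

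Next I would fix a non-negative $\varphi\in \mathcal{C}_c^\infty(Q_T)$ and apply Lemma \ref{Lemma 261124_05} to obtain
\[
0 \;\le\; \int_{Q_T} T_K(u_m^+)\, d\eta^\varphi \;\le\; \liminf_{n\to\infty}\int_{Q_T} T_K(u_m^+)\, d\eta_n^\varphi \;=\; 0
\]
for all $m\in\mathbb{N}$ and $K>0$. Since $u_m^+ \uparrow \hat{u}$ q.e.\ as $m\to\infty$ by construction of $\hat{u}$, and $T_K$ is non-decreasing and continuous, $T_K(u_m^+)\uparrow T_K(\hat{u})$ q.e., hence $\eta^\varphi$-a.e.\ because $\eta^\varphi$ does not charge $\capa_p$-null sets (Corollary \ref{Corollary 120724_01} together with Remark \ref{Rem4.10}). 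Monotone convergence then gives $\int_{Q_T} T_K(\hat{u})\, d\eta^\varphi = 0$, and letting $K\to\infty$ (again by monotone convergence, using $\hat{u}\ge 0$) I obtain $\int_{Q_T} \hat{u}\, d\eta^\varphi = 0$.

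Using the identity $d\eta^\varphi = \varphi\, d\eta$ from Remark \ref{Rem4.10}, this reads $\int_{Q_T} \hat{u}\,\varphi\, d\eta = 0$ for every non-negative $\varphi\in\mathcal{C}_c^\infty(Q_T)$. Picking an exhausting sequence $0\le \varphi_k \in \mathcal{C}_c^\infty(Q_T)$ with $\varphi_k \uparrow 1$ pointwise on $Q_T$ and applying monotone convergence one final time yields $\int_{Q_T} \hat{u}\, d\eta = 0$. Since $\hat{u}\ge 0$ and $\eta$ is a non-negative Radon measure, this forces $\hat{u}=0$ $\eta$-a.e.\ in $Q_T$, which gives both assertions. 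The main obstacle is the transfer of the quasi-everywhere identity $T_K(u_m^+)u_n^- = 0$ into a statement about the limit measure $\eta^\varphi$; this is exactly the role played by the lower-semicontinuity Lemma \ref{Lemma 261124_05}, which in turn rests on the equidiffuse estimate of Lemma \ref{Lemma 261124_03} and the quasi-continuity of the truncated functions $T_K(u_m^+)$ guaranteed by the parabolic capacity framework.
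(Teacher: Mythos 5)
Your proposal is correct and follows essentially the same route as the paper: the key identity $T_K(u_m^+)u_n^-=0$ q.e. (from the monotone ordering), the lower-semicontinuity Lemma \ref{Lemma 261124_05} to pass to $\eta^\varphi$, monotone convergence in $K$ and $m$, and finally removal of the weight $\varphi$ by an exhausting sequence. The only (immaterial) differences are the order in which you send $m\to\infty$ and $K\to\infty$, and that you deduce $\int_{Q_T}\hat u\,d\eta=0$ before concluding $\hat u=0$ $\eta$-a.e., whereas the paper argues in the reverse order.
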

\begin{proof}
For $n,m \in \N$ with $n \geq m$, we have $u_n \geq u_m$ q.e. and therefore there exists a set $N \in \mathcal{B}(Q_T)$ not depending on $n$ and $m$ with $\capa_p(N)=0$ such that $\{u_m\geq 0\}\subset \{u_n\geq 0\}\cup N$. Hence $u_m^+ u_n^- = 0$ q.e. in $Q_T$ and therefore $T_K(u_m^+) u_n^- = 0$ a.e. in $Q_T$ for any $K>0$.
Now, for any $K >0$, $T_K(u_m^+) \in L^1(Q_T,\eta^{\varphi})$. Thus using Lemma \ref{Lemma 261124_05} it follows that
\begin{align*}
0\leq \int_{Q_T} T_K(u^+_m)\ d\eta^{\varphi}\leq \liminf_{n\rightarrow\infty}\int_{Q_T} T_K(u^+_m)\varphi nu_n^-\ d(t,x) =0.
\end{align*}
Consequently,
\begin{align*}
\int_{Q_T} T_K(u^+_m)\ d\eta^{\varphi}=0
\end{align*}
for all $K>0$. Now, since $(T_K(u^+_m))_{K>0}$ is a sequence of non-negative, $\eta^{\varphi}$-integrable functions increasing towards $u^+_m$ as $K \to \infty$, it follows that
\begin{align}\label{241011_02}
\int_{Q_T}u^+_m\ d\eta^{\varphi}=\lim_{K \to \infty}\int_{Q_T}T_K(u^+_m)\ d\eta^{\varphi}=0.
\end{align}
Now, we recall that, $\bar{u}=\sup\operatorname{quasi \ ess}_{m\in\mathbb{N}}u_m$, hence, using the fact that $(u^+_m)_m$ is increasing q.e., $\hat{u}=\bar{u}^+=\sup\operatorname{quasi \ ess}_{m\in\mathbb{N}}u_m^+$ and, by monotone convergence theorem, we may pass to the limit for $m \to \infty$ in \eqref{241011_02} to arrive at
\begin{align}\label{241011_03}
\int_{Q_T} \varphi \hat{u} \ d\eta = \int_{Q_T}\hat{u}\ d\eta^{\varphi}=0.
\end{align}
Since $\varphi \hat{u}$ is non-negative q.e. and therefore especially $\eta$-a.e. it now follows that, in $\widetilde\Omega_\varphi$, $\varphi \hat{u} =0$ $\eta$-a.e. in $Q_T$. Now, selecting a sequence $(\varphi_h)_{h\in\mathbb{N}}\subset C_c^{\infty}(Q_T)$ of non-negative functions such that $0\leq \varphi_h\leq 1$ in $Q_T$ and $\varphi_h=1$ in $K_h:=\{z\in Q_T \ : \ \operatorname{dist}(z,\partial Q_T)\geq \frac{1}{h}\}$, updating $\widetilde\Omega$ by the set of full measure $\widetilde\Omega \cap (\cap_h \widetilde\Omega_{\varphi_h})$, we obtain $\hat{u}=0$ $\eta$-a.e. in $Q_T$. As a result we conclude
\begin{equation*}
\int_{Q_T} \hat{u} \ d\eta =0.
\end{equation*}	
\end{proof}

\section{The case  $ \frac{2d}{d+1}<p<2$}\label{section-p-leq-2}
In this section, we consider the following assumptions: 
\begin{itemize}
\item[$(H_1)$] Let $\frac{2d}{d+1}<p<2$ and $\widetilde{f}:\mathbb{R} \to \mathbb{R}$ be a continuous function such that: $\widetilde{f}(0)=0$, $\widetilde{f}$ is non-decreasing, and there exists a constant $C>0$ such that $\vert \widetilde{f}(\lambda)\vert \leq C\vert \lambda \vert^{p-1}.$
\item[$(H_2)$] We assume the integrand $\Phi$ to be progressively measurable in 
\[L^2(\Omega\times (0,T);HS(L^2(D);H^k(D)\cap H^1_0(D)))\] 
for $k\in\mathbb{N}$ such that $k>\max(\frac{d}{2}, \frac{2+d}{2} - \frac{d}{p})$.
\end{itemize}
Now, let us consider \eqref{220822_01} with $\widetilde{f}$ instead of $f$. We want to find a pair $(u,\eta)$ that is a solution in the sense of Definition \ref{Solution} to
\begin{align}\label{pb-reflection-}
	\begin{cases}
		du \ -\operatorname{div}\,(a(\cdot,u,\nabla u))\,dt + \widetilde{f}(u) \, dt= \Phi \,dW +d\eta \\
		u(0,\cdot)=u_0\geq 0, \quad u\geq	0. 
	\end{cases}
\end{align}
With minor changes of our argumentation, we can prove the following result:
\begin{theorem}\label{THM-bis}
	Let the assumptions in Section \ref{Aotsan} be satisfied for $\frac{2d}{d+1}<p<2$ and $\widetilde{f}$ instead of $f$. In addition, let Assumptions $(H_1)$ and $(H_2)$ be satisfied. For $u_0 \in L^2(\Omega \times D)$ $\mathcal{F}_0$-measurable with $u_0 \geq 0$ a.e. in $\Omega \times D$, 
	then there exists a solution $u$ in the sense of Definition \ref{Solution} to \eqref{pb-reflection-}.
\end{theorem}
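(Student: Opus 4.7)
The plan is to follow the full scheme of Theorem \ref{main theorem} with the penalization $f_n(r):=\widetilde{f}(r)-nr^-$, pointing out only the modifications caused by the two structural differences: $\widetilde{f}$ is merely continuous, non-decreasing and of sub-linear growth $|\widetilde{f}(\lambda)|\leq C|\lambda|^{p-1}$ rather than Lipschitz, and $p\in(\tfrac{2d}{d+1},2)$ with $\Phi$ only assumed $L^2$ in $\Omega$. Unique existence of the penalized solutions $u_n\in L^2(\Omega;\mathcal{C}([0,T];L^2(D)))\cap L^p(\Omega;L^p(0,T;W^{1,p}_0(D)))$ still follows from an adaptation of \cite{VZ21}: the operator $u\mapsto-\operatorname{div}\,a(\cdot,u,\nabla u)+\widetilde{f}(u)$ remains pseudomonotone of Leray-Lions type, the growth of $\widetilde{f}$ controls it in $L^{p'}$, and the threshold $p>\tfrac{2d}{d+1}$ is precisely what provides the parabolic Sobolev embeddings involving $L^2$ needed to close the variational framework at $p<2$.

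Two places where the Lipschitz character of $f$ was used in Sections \ref{s3}--\ref{s4} actually become simpler. In the a priori estimate obtained from It\^o's formula on $\tfrac12\|u_n\|_2^2$, the term $\int\widetilde{f}(u_n)u_n\,dx\,ds$ is non-negative by monotonicity together with $\widetilde{f}(0)=0$, hence is discarded and Gronwall becomes unnecessary; the bounds of Lemma \ref{231019_lem1} and their pathwise analogues in Remark \ref{Remark 120724_01} follow immediately. In the comparison principle of Lemma \ref{Lemma 1}, the integral $\int(\widetilde{f}(u)-\widetilde{f}(v))\eta_\delta'(u-v)\,dx\,ds$ is non-negative by monotonicity of both $\widetilde{f}$ and $\eta_\delta'$, so it too is discarded and $u\leq v$ a.e.\ (and q.e.) follows directly. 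In the gradient convergence of Lemma \ref{Lemma 190724_01}, the analogue of $I_3$ is non-negative for the same reason and is dropped, after which Boccardo's lemma together with the compact set $K_{M,\ell}$ still yields the Cauchy-in-measure conclusion. Consequently, Lemmas \ref{240806_lem1}--\ref{Lemma 220724_01}, the identification $\Psi=a(\cdot,u,\nabla u)$, the weak convergence $nu_n^-\rightharpoonup\eta$ in $L^{p'}(\Omega;\mathcal{W}')$, the narrow convergence of $\eta_n^\varphi$ to $\eta^\varphi$, the weak-$\ast$ adaptedness of $\eta$, its absence of charge on $\capa_p$-null sets, the minimality $\int_{Q_T}\hat{u}\,d\eta=0$ of Proposition \ref{Proposition 120724_01}, and the c\`adl\`ag regularity of Proposition \ref{cadlag} all carry over verbatim, as they rely only on the Leray-Lions structure of $a$, on the monotone sign of the penalization, and on the parabolic capacity apparatus of the Appendix.

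The main obstacle will be the exponent bookkeeping specific to $p<2$: verifying that $\widetilde{f}(u_n)$ stays bounded in $L^{p'}(\Omega\times Q_T)$ via $|\widetilde{f}(u_n)|\leq C|u_n|^{p-1}$ combined with the $L^p$-control of $u_n$ provided by $p>\tfrac{2d}{d+1}$; that the bound on $\|nu_n^-\|_{\mathcal{W}'}$ analogous to \eqref{240809_03} continues to hold with the weaker $L^2$-in-$\Omega$ integrability of $\Phi$ imposed by $(H_2)$; and that the parabolic capacity machinery of the Appendix (quasi-continuity, the extension in $\mathcal{W}+\mathcal{C}_{0,D}$, and the arguments underlying Lemma \ref{Lemma 261124_05}) remains valid for $p<2$. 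Once these routine but delicate verifications are in place, Theorem \ref{THM-bis} follows along the exact lines of Sections \ref{s3}--\ref{s5}.
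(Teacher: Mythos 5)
There is a genuine gap, and it is precisely the one point where the paper's Section \ref{section-p-leq-2} departs from the scheme of Theorem \ref{main theorem}: the choice of penalization. You keep the linear penalization $f_n(r)=\widetilde{f}(r)-nr^-$, whereas the paper uses $\widetilde{f}_n(r):=\widetilde{f}(r)-n(r^-)^{p-1}$. For $\frac{2d}{d+1}<p<2$ the linear term $nr^-$ has growth $1>p-1$, which is not compatible with the Leray--Lions framework in which the penalized problem must be solved: the zeroth-order perturbation has to map the energy space $L^p(0,T;W^{1,p}_0(D))\cap L^\infty(0,T;L^2(D))$ into the dual $L^{p'}$ with $p'>2$, and this is exactly what fails for a Lipschitz penalization when $p<2$. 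This is not a ``routine but delicate verification'' that can be deferred; it obstructs the well-posedness of the approximating equations, the time regularity $\partial_t(u_n-\int_0^\cdot\Phi\,dW)\in L^{p'}(0,T;W^{-1,p'}(D))$, and hence the application of It\^o's formula on which every subsequent estimate rests. The paper's closing remark of Section \ref{section-p-leq-2} states this explicitly: one ``need[s] to consider non Lipschitz penalization with a growth compatible with the one of the main pseudomonotone operator in order to get the correct regularity in time and space to use It\^o formula.'' Correspondingly, the a priori estimate you would obtain is not $\erww{\int_0^T\Vert\sqrt{n}u_n^-\Vert_2^2\,ds}\leq K$ but $\erww{\int_0^T\Vert n^{1/p}u_n^-\Vert_p^p\,ds}\leq \mathbf{K}$, the convergence $u_n^-\to 0$ holds in $L^p$ rather than $L^2$, and the object converging weakly to the reflection measure in $L^{p'}(\Omega;\mathcal{W}')$ is $n(u_n^-)^{p-1}$, not $nu_n^-$.

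The rest of your outline is sound and matches the paper: the term $\int\widetilde{f}(u_n)u_n$ is discarded by monotonicity and $\widetilde{f}(0)=0$; the comparison principle goes through because $(\widetilde{f}(\lambda_1)-\widetilde{f}(\lambda_2))\eta_\delta'(\lambda_1-\lambda_2)\geq 0$; the identification $\widetilde{f}(u_n)\to\widetilde{f}(u)$ in $L^{p'}$ uses $|\widetilde{f}(\lambda)|\leq C|\lambda|^{p-1}$ and $L^2(\Omega\times Q_T)\hookrightarrow L^p(\Omega\times Q_T)$; and the capacity machinery and Sections \ref{s4}--\ref{s5} carry over. But all of these are stated for the degree-$(p-1)$ penalization, so your proof as written would need to be rebuilt around $n(r^-)^{p-1}$ before it closes.
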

The proof of Theorem \ref{THM-bis} is based on similar arguments presented above, let us sketch the parts where some minor changes are needed.
\begin{itemize}
\item First, similarly to Remark  \ref{Remark 110724_01},
a slight modification of the results in \cite{VZ21} (see also \cite{VZ2024}) yields that there  exists a unique predictable $u \in L^2(\Omega;\mathcal{C}([0,T]; L^2(D)))\cap L^p(\Omega; L^p(0,T; W_0^{1,p}(D)))$ such that  $\mathds{P}$-a.s.  for all $t \in [0,T]$ the following SPDE
\begin{align}\label{1---}
	u(t)= u_0+\int_0^t \operatorname{div}(a(\cdot, u , \nabla u)) \, ds - \int_0^t  \widetilde{f}(u) \, ds  + \int_0^t \Phi \, dW
\end{align}
holds in $L^2(D)$.
\item Secondly, thanks to the assumptions on $\widetilde{f}$, notice that for any $\lambda_1,\lambda_2\in \mathbb{R}$: \begin{align*}
	(f(\lambda_1) - f(\lambda_2)) \eta_{\delta}'(\lambda_1-\lambda_2) \geq 0,
\end{align*}
therefore, one gets  by using similar arguments to Lemma \ref{Lemma 1} the following lemma.
\begin{lemma}\label{Lemma 1--}
	Let $u$ be a strong solution to \eqref{1---} with a Lipschitz continuous reaction term $\widetilde{f}$ and initial value $u_0 \in L^2(\Omega \times D)$. Moreover, let $v$ be a strong solution to \eqref{1} with a Lipschitz continuous reaction term $\widetilde{g}$ and initial value $v_0\in L^2(\Omega \times D)$, where $\widetilde{f} \geq \widetilde{g}$ and $u_0 \leq v_0$ a.e. in $\Omega \times D$. Then, there exists a measurable set $\Omega_{\widetilde{f},\widetilde{g},u,v}\in\mathcal{F}$ of full measure 
	such that for all $\omega\in \Omega_{\widetilde{f},\widetilde{g},u,v}$: 
\begin{itemize}
\item[$i.)$] for all $t \in [0,T]$,  $u(t) \leq v(t)$ a.e. in $D$;  
\item[$ii.)$] $u \leq v$ quasi everywhere in $Q_T$ (see Section \ref{A}: Appendix).
\end{itemize}
\end{lemma}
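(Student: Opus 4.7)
The plan is to adapt the proof of Lemma \ref{Lemma 1} nearly verbatim, the only substantive change being in the treatment of the nonlinear zero-order term. Since $u$ and $v$ are driven by the same $Q$-Wiener process, the stochastic integrals cancel in the difference, so $u - v$ lies in the parabolic Sobolev space used to formulate the equation on a set of full probability, allowing the usual integration-by-parts in time. Using the same smooth $C^2$ approximation $\eta_\delta$ of the positive part and testing the difference equation
\[
\partial_t(u-v) - \operatorname{div}\bigl[a(\cdot,u,\nabla u) - a(\cdot,v,\nabla v)\bigr] + \widetilde{f}(u) - \widetilde{g}(v) = 0
\]
against $\eta_\delta'(u-v)$ produces a decomposition $I_1 - I_2 + I_3 + I_4 = 0$ after integration over $(0,t)$.

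I would treat $I_1$, $I_2$, $I_3$ exactly as in Lemma \ref{Lemma 1}: $I_1 \to \int_D (u(t)-v(t))^+\,dx$ as $\delta \to 0^+$, $I_2 = 0$ since $u_0 \leq v_0$, and $I_3$ splits via the standard trick into a non-negative piece (from $(A1)$) plus an error that vanishes thanks to $\operatorname{supp}\eta_\delta'' \subset [0,\delta]$, the bound $\eta_\delta''(u-v)|u-v| \leq \tfrac{3}{2}$, the Lipschitz regularity $(A3)$, and H\"older's inequality between $|\nabla v|^{p-1} \in L^{p'}$ and $|\nabla(u-v)| \in L^p$. Since $(p-1)p' = p$, this estimate does not rely on $p \geq 2$ and remains valid in the range considered here.

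The essential new ingredient enters in $I_4$. I would split
\[
I_4 = \int_0^t\!\!\int_D (\widetilde{f}(u) - \widetilde{f}(v))\,\eta_\delta'(u-v)\,dx\,ds + \int_0^t\!\!\int_D (\widetilde{f}(v) - \widetilde{g}(v))\,\eta_\delta'(u-v)\,dx\,ds,
\]
observing that the first integrand is non-negative because $\operatorname{supp}\eta_\delta' \subset [0,\infty)$ and $\widetilde{f}$ is non-decreasing (so $\widetilde{f}(u)-\widetilde{f}(v)$ and $u-v$ share the same sign), while the second is non-negative from $\widetilde{f} \geq \widetilde{g}$ and $\eta_\delta' \geq 0$. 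Hence $I_4 \geq 0$, and combining the four limiting inequalities yields directly
\[
\int_D (u(t) - v(t))^+\,dx \leq 0
\]
for every $t \in [0,T]$, with \emph{no} Gronwall invocation needed; the monotonicity of $\widetilde{f}$ absorbs what was previously the $L_f$ term. The remainder of the argument — passage to $u \leq v$ a.e.\ in $Q_T$, and then to quasi-everywhere via Corollary \ref{LemPositiveQuasiCont} — is unchanged. The only point that requires mild care, being conceptual rather than computational, is to verify that the functional framework (the analog of $\mathcal{W}$, the associated integration-by-parts formula, and the q.e.\ comparison from the Appendix) carries through for $\tfrac{2d}{d+1} < p < 2$; once this is granted, the comparison argument itself needs no modification.
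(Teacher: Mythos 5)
Your proposal is correct and follows essentially the same route as the paper, which likewise reduces the claim to the argument of Lemma \ref{Lemma 1} and isolates exactly the inequality $(\widetilde{f}(\lambda_1)-\widetilde{f}(\lambda_2))\,\eta_{\delta}'(\lambda_1-\lambda_2)\geq 0$ coming from the monotonicity of $\widetilde{f}$ as the only substantive change, so that the Gronwall step becomes unnecessary. Your additional remarks --- that the $I_3$ estimate only uses $(p-1)p'=p$ and hence survives for $\frac{2d}{d+1}<p<2$, and that the functional framework $\mathcal{W}$ and the quasi-everywhere comparison must be checked in this range --- are consistent with what the paper leaves implicit.
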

\item Next, for $n \in \N$ we set $\widetilde{f}_n(r):= \widetilde{f}(r) - n (r^-)^{p-1}$. Then, there exists a unique predictable process $u_n \in L^2(\Omega; \mathcal{C}([0,T]; L^2(D)))\cap L^p(\Omega; L^p(0,T; W_0^{1,p}(D)))$ such that $u_n(0, \cdot)=u_0$ in $L^2(\Omega\times D)$ and $\mathds{P}$-a.s. in $\Omega$ the following equality
\begin{align*}
	\partial_t (u_n - \int_0^{\cdot} \Phi \, dW) - \operatorname{div}(a(\cdot, u_n , \nabla u_n)) + \widetilde{f}(u_n) - n(u_n^-)^{p-1} =0
\end{align*}
holds  in $L^{p'}(0,T; W^{-1,p'}(D))$.
\item Similar arguments as in Subsection \ref{subsection-estimate-exp} ensure the existence of $\mathbf{K} >0$ not depending on $n \in \N$ such that

\begin{align}\label{est-p2}
	\begin{aligned}
&	\erww{\sup_{t\in[0,T]}\Vert u_n(t)\Vert_2^2}\leq \mathbf{K}, \quad 	\erww{\int_0^T\Vert \nabla u_n(s)\Vert_p^p\,dt}\leq \mathbf{K},\\
&	\erww{\int_0^T\int_D |a(x,u_n,\nabla u_n)|^{p'}\,dx\,dt}\leq \mathbf{K}, \quad 	\erww{\int_0^T\left\Vert n^{\frac{1}{p}} u_n^{-}(s)\right\Vert_p^p\,ds}\leq \mathbf{K}.
\end{aligned}
\end{align}
$\bullet$ By using Lemma \ref{Lemma 1--}, similar results as in Subsection \ref{subsection-pathwise-esti} hold.  Thanks to \eqref{est-p2} the convergence results of Subsection \ref{Subsection-convergence} hold, except changing \eqref{231019_07} by 
\begin{align*}
	u_n^-\rightarrow 0 \ \text{in} \ L^p(\Omega;L^p(0,T;L^p(D))).
\end{align*}
\item With the same arguments as in the proof of the Lemma \ref{lem-cv-reflected}, we get \begin{lemma}
	There exists $\widetilde{\eta}\in L^{p'}(\Omega;\mathcal{W}')$ such that, up to a not relabeled subsequence, 
	\[n(u_n^-)^{p-1}\rightharpoonup \widetilde{\eta}\]
	weakly in $L^{p'}(\Omega;\mathcal{W}')$. Moreover, 
	$(\widetilde{\eta}(\omega))_{\omega\in\Omega}$ is a weak-$\ast$ adapted random non-negative Radon measure.
\end{lemma}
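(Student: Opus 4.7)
The plan is to closely mirror the proof of Lemma \ref{lem-cv-reflected}, adapting the estimates to the subcritical regime $\frac{2d}{d+1}<p<2$ with the reaction $\widetilde{f}$ and the modified penalization $n(u_n^-)^{p-1}$. Testing the approximating equation for $u_n$ with $\xi \in \mathcal{C}_c^{\infty}([0,T]\times D)$ and integrating by parts in time produces an identity analogous to \eqref{231019_08}, expressing $\int_{Q_T}n(u_n^-)^{p-1}\xi\,d(t,x)$ as the sum of boundary terms at $t=0,T$, a term involving $\partial_t\xi$ tested against $u_n-\int_0^{\cdot}\Phi\,dW$, the reaction term $\widetilde{f}(u_n)\xi$, and the diffusion term $a(x,u_n,\nabla u_n)\cdot\nabla\xi$. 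By density of $\mathcal{C}_c^{\infty}([0,T]\times D)$ in $\mathcal{W}$, this extends to every $\xi\in\mathcal{W}$, so $n(u_n^-)^{p-1}$ defines a random element of $\mathcal{W}^\prime$.

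I would then estimate the right-hand side with H\"older's inequality and the continuous embedding $\mathcal{W}\hookrightarrow\mathcal{C}([0,T];L^2(D))$, exactly as in \eqref{240809_01}--\eqref{240809_04}. The only new ingredient is that the growth bound $|\widetilde{f}(\lambda)|\leq C|\lambda|^{p-1}$ from $(H_1)$ yields $\|\widetilde{f}(u_n)\|_{L^{p'}(Q_T)}\leq C\|u_n\|_{L^p(Q_T)}^{p-1}$, so this term is absorbed using the $L^p$ estimate on $u_n$ rather than the $L^2$ estimate used for the Lipschitz $f$. Combined with the pathwise bounds \eqref{est-p2}, assumption $(H_2)$ on $\Phi$, and the Poincar\'e inequality, this produces a uniform bound $\erww{\|n(u_n^-)^{p-1}\|_{\mathcal{W}^\prime}^{p'}}\leq C$. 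Reflexivity of $L^{p'}(\Omega;\mathcal{W}^\prime)$ then provides a not relabeled subsequence converging weakly to some $\widetilde{\eta}$.

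For the remaining claim, the argument of Lemma \ref{lem-cv-reflected} transfers verbatim: since $n(u_n^-)^{p-1}\geq 0$, testing against non-negative $\xi\in\mathcal{W}$ and indicator functions $\mathds{1}_A$ with $A\in\mathcal{F}$ yields $\langle\widetilde{\eta},\xi\rangle_{\mathcal{W}^\prime,\mathcal{W}}\geq 0$ $\mathds{P}$-a.s.\ on a countable dense subset of $\mathcal{W}^+$, and the inequality is extended by continuity to all of $\mathcal{W}^+$ and in particular to $\mathcal{D}(Q_T)^+$. The positivity theorem \cite[Theorem 2.1.7]{H03} then identifies $\widetilde{\eta}(\omega)$ with a non-negative Radon measure on $Q_T$ for all $\omega$ in a set of full measure, while weak-$\ast$ adaptedness follows from the same cutoff-in-time argument as in Lemma \ref{241204_lem01}, applied with $n(u_n^-)^{p-1}$ in place of $nu_n^-$. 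The main obstacle I anticipate is purely bookkeeping: verifying that the stochastic convolution $\int_0^{\cdot}\Phi\,dW$, which under $(H_2)$ only has $L^2(\Omega)$ integrability rather than $L^p(\Omega)$, still closes all the estimates in $L^{p'}(\Omega;\mathcal{W}^\prime)$. Here the lower bound $p>\frac{2d}{d+1}$ and the Sobolev embeddings underlying $(H_2)$, which place $\int_0^{\cdot}\Phi\,dW$ in $L^2(\Omega;\mathcal{C}([0,T];L^2(D)))\cap L^p(\Omega;L^p(0,T;W_0^{1,p}(D)))$, are exactly what is needed to carry every term through with $p'>2$.
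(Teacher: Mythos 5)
Your proposal is correct and follows essentially the same route as the paper, whose proof of this lemma consists precisely of invoking ``the same arguments as in the proof of Lemma~\ref{lem-cv-reflected}'' (with Lemma~\ref{241204_lem01} supplying the weak-$\ast$ adaptedness); your adaptations --- the duality identity analogous to \eqref{231019_08}, the growth bound $|\widetilde f(\lambda)|\le C|\lambda|^{p-1}$ in place of the Lipschitz bound, the estimates \eqref{est-p2}, and the verbatim positivity and cutoff-in-time arguments --- are exactly what is intended. The one point you flag, namely closing the $\mathcal{W}'$-estimate in $L^{p'}(\Omega)$ with $p'>2$ for the $\mathcal{C}([0,T];L^2(D))$-terms, is indeed the only delicate spot, and there you are already more explicit than the paper, which does not discuss it.
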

Finally, recall that $L^2(\Omega\times (0,T)\times D) \hookrightarrow L^p(\Omega\times (0,T)\times D) $, thus   $\lim_{n\rightarrow\infty}\widetilde{f}(u_n)=\widetilde{f}(u)$ in  $L^{p^\prime}(\Omega\times (0,T)\times D)$ if $u_n\to u$ in $L^2(\Omega\times (0,T)\times D)$ and by using the above points, it is not difficult to see that the results of Section \ref{section-L1-estimate} and Section \ref{Section-reflected+solution} hold true in this case as well and Theorem \ref{THM-bis} holds.
\end{itemize}
\begin{remark}
It is worth mentioning that studying  deterministic or stochastic  parabolic PDEs with constraints is more challenging in the case of  $\frac{2d}{d+1}<p<2$, due to some regularity in time issues. Hence, we need to consider non Lipschitz penalization with a growth compatible with the one of the main pseudomonotone operator in order to get the correct regularity in time and space to use It\^o formula (integration by parts in the deterministic setting), we refer e.g. to \cite{GuibeMTV2020,Yassine2020,TahraouiVallet2022} for more details.
\end{remark}
\section{Appendix: Parabolic capacity and quasi continuous functions}\label{A}
We recall some basic results from nonlinear capacity theory. In the following, let $Q_T:=(0,T)\times D$,  and the Lebesgue measure on $Q_T$ be denoted by $\lambda$. For $2\leq p<\infty$, the function space
\[\mathcal{W}:=\{v\in L^p(0,T;W^{1,p}_0(D)) \ | \ \partial_t v \in L^{p'}(0,T;W^{-1,p'}(D))\}\]
endowed with its natural norm
\[\Vert v\Vert_{\mathcal{W}}:=\Vert v\Vert_{L^p(0,T;W^{1,p}_0(D))}+\Vert v\Vert_{L^{p'}(0,T;W^{-1,p'}(D))}\]
is a reflexive separable Banach space (see, e.g., \cite[Ch.XVIII, Sec.4]{DL}). Moreover, $\mathcal{W}$ is continuously imbedded in $\mathcal{C}([0,T]; L^2(D))$ (see \cite{DL}) and $\mathcal{C}^{\infty}_c([0,T] \times D)$ is a dense subset of $\mathcal{W}$ (see \cite[Theorem 2.11]{DPP}) with respect to the norm of $\mathcal{W}$.
\begin{definition}[see, {\cite[Definition 2.7]{DPP}}]\label{Definition capacity}
For any open set $U\subset Q_T$ we define the (parabolic $p$-) capacity of $U$ as
\[\capa_p(U)=\inf\left\{\Vert v\Vert_{\mathcal{W}} \ | \ v\in\mathcal{W}, \ v\geq \mathds{1}_U \ \text{a.e. in} \ Q_T\right\}\]
with the convention $\inf\emptyset:=+\infty$. Then, for any Borel subset $B\subset Q_T$ the definition is extended by setting
\[\capa_p(B)=\inf\left\{\capa_p(U) \ | \ U\subset Q_T \ \text{open subset}, \  B\subset U\right\}.\]
\end{definition}
\begin{definition}[see, e.g., {\cite[p. 531]{P}}]\label{Definition cap-quasi continuous}
A function $v:Q_T\rightarrow\mathbb{R}$ is called $\capa_p$-quasi continuous, if for every $\varepsilon>0$ there exists an open set $U_{\varepsilon}\subset Q_T$ with $\capa_p(U_{\varepsilon})\leq \varepsilon$ and such that $v_{|Q_T\setminus U_{\varepsilon}}$ is continuous on $Q_T\setminus U_{\varepsilon}$.
\end{definition}
We recall that $\lambda(E)^{1/p} \leq \capa_p(E)$ for all $E \in \mathcal{B}(Q_T)$ and therefore any set of zero capacity is $\lambda$-negligible. The converse is, in general, not true. Therefore the sets of zero capacity may be considered as exceptional sets that are finer than $\lambda$-negligible
sets. A property will be said to hold $\capa_p$-quasi everywhere, i.e., "q.e.", if it holds everywhere except on a set of zero capacity.
\begin{lemma}[see, e.g., {\cite[Lemma 2.20]{DPP}}]\label{220822_lem01}
Any element $v$ of $\mathcal{W}$ has a $\capa_p$-quasi continuous representative $\widetilde{v}$ which is $\capa_p$-quasi everywhere unique in the sense that two $\capa_p$-quasi continuous representatives of $v$ are equal except on a set of zero capacity.
\end{lemma}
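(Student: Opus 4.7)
My plan is to follow the classical construction from nonlinear parabolic potential theory: use density of smooth functions in $\mathcal{W}$ to extract a rapidly Cauchy subsequence, then localize the bad set via a capacity Chebyshev-type inequality. Concretely, given $v \in \mathcal{W}$, the density of $\mathcal{C}_c^\infty([0,T]\times D)$ in $\mathcal{W}$ supplies a sequence $(v_n) \subset \mathcal{C}_c^\infty([0,T]\times D)$ with $\Vert v_n - v\Vert_{\mathcal{W}} \to 0$, and after extraction I may assume $\Vert v_{n+1} - v_n\Vert_{\mathcal{W}} \leq 4^{-n}$.

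The central analytic ingredient is an inequality of the form
\[
\capa_p\bigl(\{|\phi| > \lambda\}\bigr) \leq \frac{C}{\lambda}\,\Vert \phi\Vert_{\mathcal{W}}, \qquad \phi \in \mathcal{C}_c^\infty([0,T]\times D),\ \lambda>0.
\]
For smooth $\phi$ the superlevel set is open, so this only requires exhibiting an admissible competitor in the definition of $\capa_p$. The natural candidate $\min(|\phi|/\lambda,1)$ is not directly in $\mathcal{W}$ because of the absolute value in time, so I would construct a competitor by first splitting $\phi=\phi^+-\phi^-$, truncating each piece at level $\lambda$, dividing by $\lambda$, and regularizing in time to recover the $\mathcal{W}$-regularity; the resulting bound is controlled by $\Vert \phi\Vert_{\mathcal{W}}/\lambda$ up to a uniform constant. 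This is essentially the content of the density and truncation lemmas in \cite{DPP}, and I would invoke them here.

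With this inequality in hand, set $U_n := \{|v_{n+1}-v_n|>2^{-n}\}$, which is open because the $v_n$ are continuous, and obtain $\capa_p(U_n) \leq C\,2^{-n}$. By subadditivity of capacity on open sets, $V_k := \bigcup_{n\geq k} U_n$ is open with $\capa_p(V_k) \leq C\,2^{-k+1} \to 0$. On $Q_T \setminus V_k$, the Weierstrass criterion gives uniform convergence of $(v_n)_{n\geq k}$ to a continuous function, so the pointwise limit $\widetilde{v} := \lim_n v_n$ is defined off the capacity-zero set $N := \bigcap_k V_k$ and its restriction to the closed set $Q_T \setminus V_k$ is continuous; this is exactly Definition~\ref{Definition cap-quasi continuous}. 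Since $v_n \to v$ in $L^p(Q_T)$ via the embedding $\mathcal{W}\hookrightarrow L^p$ and $v_n \to \widetilde{v}$ a.e. (capacity-zero implies Lebesgue-negligible), $\widetilde{v}$ is a representative of $v$.

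The hardest step is uniqueness, and it is where I expect the main obstacle. Given two quasi continuous representatives $\widetilde{v}_1,\widetilde{v}_2$ of $v$, set $w := \widetilde{v}_1 - \widetilde{v}_2$; then $w$ is quasi continuous and $w = 0$ almost everywhere, and I must show $w = 0$ quasi everywhere. For each $\delta,\varepsilon > 0$ choose an open $U_\varepsilon$ with $\capa_p(U_\varepsilon)<\varepsilon$ such that $w$ is continuous on $Q_T\setminus U_\varepsilon$; then $\{|w|>\delta\}\cap(Q_T\setminus U_\varepsilon)$ is relatively open in $Q_T\setminus U_\varepsilon$, i.e.\ equals $V_\delta \cap (Q_T\setminus U_\varepsilon)$ for some open $V_\delta\subset Q_T$. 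Since $V_\delta\setminus U_\varepsilon$ is contained in the Lebesgue-null set $\{|w|>\delta\}$, the open set $V_\delta\setminus \overline{U_\varepsilon}$ must be empty, hence $V_\delta\subset \overline{U_\varepsilon}$. Enlarging $U_\varepsilon$ slightly so that $\overline{U_\varepsilon}$ still has small capacity (again using the regularization step from the capacity inequality above) yields $\capa_p(\{|w|>\delta\}) \leq \capa_p(V_\delta)+\varepsilon \leq 2\varepsilon$ for any $\varepsilon>0$, so $\capa_p(\{|w|>\delta\})=0$. Taking a countable union over $\delta = 1/m$ gives $\capa_p(\{w\neq 0\})=0$, which is the desired quasi everywhere uniqueness.
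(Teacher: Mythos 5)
Your overall strategy is the classical one. Note first that for the existence half the paper offers no proof of its own: it cites \cite[Lemma 2.20]{DPP} and only supplies an argument for the uniqueness half, via Lemma \ref{LemPositiveQuasiCont} and Corollary \ref{CorQuasiEqual}. Your reconstruction of the existence proof (rapidly Cauchy smooth approximations, a capacity Chebyshev bound on superlevel sets, uniform convergence off open sets of small capacity) is the right skeleton, but the inequality $\capa_p(\{|\phi|>\lambda\})\leq C\Vert\phi\Vert_{\mathcal{W}}/\lambda$ is not of the correct form: truncation and the positive part do not act boundedly on $\mathcal{W}$, since $\Vert\partial_t\phi^+\Vert_{L^{p'}(0,T;W^{-1,p'}(D))}$ is not controlled by $\Vert\partial_t\phi\Vert_{L^{p'}(0,T;W^{-1,p'}(D))}$. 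The estimate actually proved in \cite{DPP} is obtained by integrating $\partial_t\phi$ against the truncate and passing through the $\mathcal{C}([0,T];L^2(D))$ norm, and it carries additional powers of $1/\lambda$ and of the norms of $\phi$. Since you defer this step to \cite{DPP}, and the telescoping argument is insensitive to the precise exponents (one only needs summability of $\capa_p(U_n)$, which a faster Cauchy rate always provides), this is an inaccuracy rather than a fatal flaw.

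The genuine gap is in your uniqueness argument, at the step ``enlarging $U_\varepsilon$ slightly so that $\overline{U_\varepsilon}$ still has small capacity.'' Nothing in the definition of $\capa_p$ guarantees that an open set of small capacity is contained in an open set whose \emph{closure} has small capacity, and you give no construction. Fortunately the closure is never needed. From the relative openness you already established, $\{|w|>\delta\}\subset \bigl(V_\delta\cap(Q_T\setminus U_\varepsilon)\bigr)\cup U_\varepsilon\subset V_\delta\cup U_\varepsilon$, and $V_\delta\cup U_\varepsilon$ is open and coincides with $U_\varepsilon$ up to the Lebesgue-null set $V_\delta\setminus U_\varepsilon\subset\{|w|>\delta\}$. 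Because the capacity of an open set is defined through competitors $v\geq\mathds{1}_U$ only \emph{almost everywhere}, every competitor for $U_\varepsilon$ is a competitor for $V_\delta\cup U_\varepsilon$, whence $\capa_p(\{|w|>\delta\})\leq\capa_p(V_\delta\cup U_\varepsilon)\leq\capa_p(U_\varepsilon)<\varepsilon$, with no subadditivity and no closures. This is exactly the paper's proof of Lemma \ref{LemPositiveQuasiCont} (applied there to $A=\{u<0\}$ and invoking \cite[Lemma 2.1]{PPP11} for the a.e.\ stability of the infimum), from which the uniqueness statement follows as Corollary \ref{CorQuasiEqual}.
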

The uniqueness part of Lemma \ref{220822_lem01} is a direct consequence of the following (stronger) result:

\begin{lemma}\label{LemPositiveQuasiCont}
If $u:Q_T\rightarrow \mathbb{R}$ is a $\capa_p$-quasi continuous function such that $u \geq 0$ a.e. in $Q_T$, then $u \geq 0$ q.e. in $Q_T$.
\end{lemma}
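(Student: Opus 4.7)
The plan is to show that the set $N := \{x \in Q_T : u(x) < 0\}$ has zero parabolic capacity. Since by hypothesis $N$ is $\lambda$-null and since by outer regularity of $\capa_p$ it suffices, for each $\varepsilon > 0$, to exhibit an open set $V_\varepsilon \supset N$ with $\capa_p(V_\varepsilon) \leq C\varepsilon$ for a constant $C$ independent of $\varepsilon$, the task decomposes into two steps: a geometric localization of $N$ near $U_\varepsilon$, and a capacity bound for that localization.

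Fix $\varepsilon > 0$ and, invoking Definition \ref{Definition cap-quasi continuous}, choose an open $U_\varepsilon \subset Q_T$ with $\capa_p(U_\varepsilon) < \varepsilon$ such that $u|_{Q_T \setminus U_\varepsilon}$ is continuous. The geometric heart of the argument is the inclusion
\[
N \subset \overline{U_\varepsilon}\quad\text{(closure in }Q_T\text{)}.
\]
Indeed, if some $z \in N$ lay outside $\overline{U_\varepsilon}$, then $W := Q_T \setminus \overline{U_\varepsilon}$ would be an open-in-$Q_T$ neighborhood of $z$ contained in $Q_T \setminus U_\varepsilon$, so $u|_W$ would be continuous in the ordinary sense; since $u(z) < 0$, continuity would furnish an open ball $B(z,r) \subset W$ on which $u < 0$, producing a subset of $N$ of strictly positive Lebesgue measure and contradicting $\lambda(N) = 0$.

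To convert $N \subset \overline{U_\varepsilon}$ into a genuine capacity bound, I plan to mimic the smooth cutoff construction already used in the proof of Corollary \ref{Corollary 120724_01} and based on the CAP-characterization of $\capa_p$ in \cite{DPP}: up to a universal multiplicative constant, there exists a non-negative $\psi_\varepsilon \in \mathcal{C}_c^\infty(Q_T)$ with $\psi_\varepsilon \geq \mathds{1}_{U_\varepsilon}$ almost everywhere and $\|\psi_\varepsilon\|_\W \leq C\varepsilon$. Being continuous, $\psi_\varepsilon$ must satisfy $\psi_\varepsilon \geq 1$ at \emph{every} point of the open set $U_\varepsilon$ (the closed set $\{\psi_\varepsilon \geq 1\}$ has full Lebesgue measure in $U_\varepsilon$, so it contains all of it), and therefore also on $\overline{U_\varepsilon} \supset N$. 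The open level set $V_\varepsilon := \{\psi_\varepsilon > 1/2\}$ then covers $N$, and $2\psi_\varepsilon \in \W$ is an admissible competitor in Definition \ref{Definition capacity}, giving $\capa_p(V_\varepsilon) \leq 2\|\psi_\varepsilon\|_\W \leq 2C\varepsilon$. Letting $\varepsilon \to 0$ yields $\capa_p(N) = 0$, i.e.\ $u \geq 0$ quasi everywhere in $Q_T$.

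The step I expect to be the main obstacle is precisely this capacity upgrade. The inclusion $N \subset \overline{U_\varepsilon}$ is essentially free, but $\partial U_\varepsilon$ may carry positive capacity, so no naive estimate of the form $\capa_p(\overline{U_\varepsilon}) \leq \capa_p(U_\varepsilon)$ is available; an argument with merely lower-semicontinuous capacitary potentials also fails, because LSC gives $\psi_\varepsilon(z) \leq \liminf \psi_\varepsilon$ rather than the desired lower bound at boundary points. The remedy is to insist on a continuous (in fact smooth) capacitary function, which promotes ``$\geq 1$ almost everywhere on $U_\varepsilon$'' to ``$\geq 1$ everywhere on $\overline{U_\varepsilon}$''; this is the very flexibility offered by the smoother CAP-variant already exploited in Section \ref{s5}.
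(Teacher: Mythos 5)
Your geometric step is fine: the inclusion $N=\{u<0\}\subset\overline{U_\varepsilon}\cap Q_T$ does follow from the continuity of $u$ on the open set $Q_T\setminus\overline{U_\varepsilon}$ together with $\lambda(N)=0$. The gap is exactly where you flagged it, and the device you propose does not close it. The existence of a \emph{smooth} (or even merely continuous) $\psi_\varepsilon\geq\mathds{1}_{U_\varepsilon}$ with $\Vert\psi_\varepsilon\Vert_{\W}\leq C\varepsilon$ is not available: the $\mathrm{CAP}$--$\capa_p$ relation from \cite{DPP} that is exploited in the proof of Corollary \ref{Corollary 120724_01} is only the statement that, for a \emph{compact} set $K$, $\capa_p(K)=0$ implies $\mathrm{CAP}(K)=0$; this is a qualitative equivalence of null sets, not a two-sided bound $\mathrm{CAP}\leq C\,\capa_p$ valid for open sets of small positive capacity, and no such quantitative comparison is proved in the paper or in the references it cites. (There is also a secondary obstruction: a function with compact support in $D$ vanishes near $\partial D$, so it cannot dominate $\mathds{1}_{U_\varepsilon}$ even almost everywhere when $U_\varepsilon$ accumulates at the lateral boundary, which Definition \ref{Definition cap-quasi continuous} does not exclude.) Without continuity of the competitor, your promotion of ``$\geq 1$ a.e.\ on $U_\varepsilon$'' to ``$\geq 1$ everywhere on $\overline{U_\varepsilon}$'' collapses, and for a general $v\in\W$ with $v\geq\mathds{1}_{U_\varepsilon}$ a.e.\ nothing controls $v$ on $\partial U_\varepsilon$.

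The paper's proof sidesteps the closure altogether. Since $u|_{Q_T\setminus U_j}$ is continuous, $N\setminus U_j=\{u|_{Q_T\setminus U_j}<0\}$ is relatively open in $Q_T\setminus U_j$, so $N\cup U_j$ is \emph{quasi-open}; it coincides with $U_j$ up to a Lebesgue-null set, and \cite[Lemma 2.1]{PPP11} permits estimating the capacity of a quasi-open set by the $\W$-norm of any a.e.\ majorant, giving $\capa_p(N)\leq\capa_p(N\cup U_j)\leq\Vert v_j\Vert_{\W}\to 0$. If you replace your passage to $\overline{U_\varepsilon}$ by this quasi-open localization, your argument becomes the paper's; as written, the capacity upgrade rests on an unproven (and, in the quantitative form you need, unavailable) smooth-capacitary-function claim.
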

\begin{proof}
Denote by $A=\{u<0\}$ and  since $u$ is $\capa_p$-quasi continuous, there exists a sequence of open sets $(U_j)_{j\in\mathbb{N}}\subset Q_T$ such that $\capa_p(U_j)=\frac{1}{j}$ and $u$ is continuous on $U_j^C:=Q_T\setminus U_j$ for all $j\in\mathbb{N}$. As a consequence, $A \cup U_j$ is a $\capa_p$-quasi open set and there exists a sequence $(v_j) \subset \mathcal{W}$ such that $\|v_j\|_{\mathcal{W}} \to 0$ and $v_j \geq \mathds{1}_{U_j}$. Since $A$ is negligible for the Lebesgue measure on $Q_T$, $A \cup U_j=U_j$ a.e. and $v_j \geq \mathds{1}_{A \cup U_j}$ a.e.. Thus, $\capa_p(A) \leq \capa_p(A \cup U_j) \leq \|v_j\|_{\mathcal{W}} \to 0$ and therefore $\capa_p(A)=0$ (see \cite[Lemma 2.1]{PPP11}).
\end{proof}
\begin{corollary}\label{CorQuasiEqual}
If $u,v:Q_T\rightarrow\mathbb{R}$ are $\capa_p$-quasi continuous and $u=v$ a.e. in $Q_T$, then $u=v$ q.e. in $Q_T$.
\end{corollary}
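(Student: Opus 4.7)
The natural plan is to reduce the statement to Lemma \ref{LemPositiveQuasiCont} by considering the difference $w := u - v$. The reduction has two ingredients: first, that $w$ is $\capa_p$-quasi continuous, and second, that the hypothesis $u=v$ a.e. translates into $w \geq 0$ a.e. (and $-w \geq 0$ a.e.), which by the lemma upgrades to inequalities holding quasi everywhere.

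To see that $w$ is $\capa_p$-quasi continuous, I would fix $\varepsilon > 0$ and use Definition \ref{Definition cap-quasi continuous} applied to $u$ and $v$ at level $\varepsilon/2$: there exist open sets $U^u_{\varepsilon/2}, U^v_{\varepsilon/2} \subset Q_T$ with $\capa_p(U^u_{\varepsilon/2}), \capa_p(U^v_{\varepsilon/2}) \leq \varepsilon/2$ such that $u$ is continuous on $Q_T \setminus U^u_{\varepsilon/2}$ and $v$ is continuous on $Q_T \setminus U^v_{\varepsilon/2}$. Setting $U_\varepsilon := U^u_{\varepsilon/2} \cup U^v_{\varepsilon/2}$, the restriction of $w$ to $Q_T \setminus U_\varepsilon$ is continuous as the difference of two continuous functions. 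The bound $\capa_p(U_\varepsilon) \leq \varepsilon$ follows from the subadditivity of $\capa_p$: given admissible functions $\psi_1 \geq \mathds{1}_{U^u_{\varepsilon/2}}$ and $\psi_2 \geq \mathds{1}_{U^v_{\varepsilon/2}}$ in $\mathcal{W}$, the sum $\psi_1 + \psi_2 \in \mathcal{W}$ satisfies $\psi_1+\psi_2 \geq \mathds{1}_{U_\varepsilon}$ a.e., and the triangle inequality in $\mathcal{W}$ gives the desired bound after taking the infimum in Definition \ref{Definition capacity}.

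With $w$ $\capa_p$-quasi continuous and $w = 0$ a.e. in $Q_T$, I would apply Lemma \ref{LemPositiveQuasiCont} twice: once to $w$ (yielding $w \geq 0$ q.e.) and once to $-w$ (which is also $\capa_p$-quasi continuous by the same stability argument, and $\geq 0$ a.e., yielding $-w \geq 0$ q.e., i.e., $w \leq 0$ q.e.). The set $\{w \neq 0\}$ is the union of the two quasi exceptional sets where these inequalities fail, hence has capacity zero by subadditivity once more, proving $u = v$ q.e. in $Q_T$.

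I do not anticipate any serious obstacle: the whole argument is a short deduction from Lemma \ref{LemPositiveQuasiCont} together with the (trivial) stability of $\capa_p$-quasi continuity under sums and scalar multiplication, and the subadditivity of $\capa_p$. The only mild care needed is to record subadditivity cleanly, since it is used both to form the exceptional set for $w$ and to combine the two one-sided inequalities at the end.
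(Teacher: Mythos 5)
Your proof is correct and follows essentially the same route as the paper, which likewise deduces the corollary from Lemma \ref{LemPositiveQuasiCont} by noting that $u=v$ iff $u-v\geq 0$ and $v-u\geq 0$. You simply make explicit the details the paper leaves implicit (stability of $\capa_p$-quasi continuity under differences and the subadditivity of $\capa_p$ used to combine exceptional sets), and these details are carried out correctly.
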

\begin{proof} 
This is a direct consequence of the above lemma noticing that $u=v$ if and only if $u-v\geq 0$ and $v-u \geq 0$.
\end{proof}
\begin{lemma}\label{Lemma 261124_04}
Let $\tilde{Q}_T:= (-T,2T) \times D$ and denote by $\widetilde{\capa}_p$ the p-parabolic capacity with respect to $\tilde{Q}_T$, i.e., in Definition \ref{Definition capacity} replace $\mathcal{W}$ by
\begin{equation*}
\mathcal{W}(-T,2T):= \{v\in L^p(-T,2T;W^{1,p}_0(D)) \ | \ \partial_t v \in L^{p'}(-T,2T;W^{-1,p'}(D))\}.
\end{equation*}
Then, for any $E \in \mathcal{B}(Q_T)$ we have
\begin{equation*}
\capa_p(E) \leq \widetilde{\capa}_p(E) \leq 3 \capa_p(E).
\end{equation*}
\end{lemma}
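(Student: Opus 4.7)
The plan is to prove the two inequalities separately by the classical restriction and even-reflection arguments. Both reduce to the definition of capacity on open sets, then extend to Borel sets by passing to the infimum over open covers.

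For the lower bound $\capa_p(E) \leq \widetilde{\capa}_p(E)$, I would start with the case where $U \subset Q_T$ is open. Given any admissible $v \in \mathcal{W}(-T,2T)$ with $v \geq \mathds{1}_U$ a.e. in $\tilde Q_T$, the restriction $v|_{Q_T}$ belongs to $\mathcal{W}$, still satisfies $v|_{Q_T} \geq \mathds{1}_U$ a.e. in $Q_T$, and trivially $\|v|_{Q_T}\|_{\mathcal{W}} \leq \|v\|_{\mathcal{W}(-T,2T)}$. Taking the infimum yields $\capa_p(U) \leq \widetilde{\capa}_p(U)$. For a general Borel set $E \subset Q_T$ and any open $\tilde U \subset \tilde Q_T$ with $E \subset \tilde U$, the set $\tilde U \cap Q_T$ is open in $Q_T$ and contains $E$, and the same restriction argument gives $\capa_p(\tilde U \cap Q_T) \leq \widetilde{\capa}_p(\tilde U)$; infimizing over $\tilde U$ gives the claim.

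For the upper bound $\widetilde{\capa}_p(E) \leq 3\,\capa_p(E)$, note that any open $U \subset Q_T$ is also open in $\tilde Q_T$, so it suffices to show $\widetilde{\capa}_p(U) \leq 3\,\capa_p(U)$ for such $U$. Given admissible $v \in \mathcal{W}$ with $v \geq \mathds{1}_U$ a.e. in $Q_T$ (hence $v \geq 0$ a.e.), define the even reflection
\begin{equation*}
\tilde v(t,x) := \begin{cases} v(-t,x), & t\in(-T,0),\\ v(t,x), & t\in(0,T),\\ v(2T-t,x), & t\in(T,2T).\end{cases}
\end{equation*}
Then $\tilde v \geq 0$ everywhere, so $\tilde v \geq \mathds{1}_U$ a.e.\ in $\tilde Q_T$. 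A direct change of variables gives
\begin{equation*}
\|\tilde v\|_{L^p(-T,2T;W^{1,p}_0(D))}^p = 3\|v\|_{L^p(0,T;W^{1,p}_0(D))}^p,
\end{equation*}
and an analogous identity on the dual-norm side, provided the distributional time derivative across the interfaces $t=0$ and $t=T$ produces no singular part. This is where the continuous embedding $\mathcal{W}\hookrightarrow \mathcal{C}([0,T];L^2(D))$ is crucial: $\tilde v$ is continuous in $L^2$ across these two interfaces, so the distributional derivative of the reflection is simply the reflection of the derivative (with appropriate sign) on each subinterval, whence
\begin{equation*}
\|\partial_t \tilde v\|_{L^{p'}(-T,2T;W^{-1,p'}(D))}^{p'} = 3\|\partial_t v\|_{L^{p'}(0,T;W^{-1,p'}(D))}^{p'}.
\end{equation*}
Combining these two estimates yields
\begin{equation*}
\|\tilde v\|_{\mathcal{W}(-T,2T)} = 3^{1/p}\|v\|_{L^p(0,T;W^{1,p}_0(D))} + 3^{1/p'}\|\partial_t v\|_{L^{p'}(0,T;W^{-1,p'}(D))} \leq 3\|v\|_{\mathcal{W}},
\end{equation*}
since $3^{1/p},3^{1/p'}\leq 3$. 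Infimizing over $v$ gives $\widetilde{\capa}_p(U)\leq 3\capa_p(U)$, and then infimizing over open $U\supset E$ gives the Borel statement.

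The only genuinely non-routine step is the verification that the even reflection $\tilde v$ lies in $\mathcal{W}(-T,2T)$, i.e., that no $\delta$-type distribution appears in $\partial_t\tilde v$ at $t=0$ and $t=T$. This is the content of the embedding $\mathcal{W}\hookrightarrow\mathcal{C}([0,T];L^2(D))$ recalled in the appendix and can be checked by integration by parts against a test function in $\mathcal{D}((-T,2T)\times D)$, comparing the boundary contributions from the three subintervals and observing that they cancel pairwise at $t=0$ and $t=T$ thanks to trace continuity of $v$.
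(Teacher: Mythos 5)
Your proof is correct and follows essentially the same route as the paper: restriction of competitors for $\capa_p(E)\leq\widetilde{\capa}_p(E)$ and even reflection in time for $\widetilde{\capa}_p(E)\leq 3\,\capa_p(E)$, with the membership $\tilde v\in\mathcal{W}(-T,2T)$ justified via continuity in $\mathcal{C}([0,T];L^2(D))$ (the paper simply cites the proof of \cite[Theorem 2.11]{DPP} for this point). Your only cosmetic slip is the phrase ``$\tilde v\geq 0$ everywhere, so $\tilde v\geq\mathds{1}_U$'': one also needs $\tilde v=v\geq\mathds{1}_U$ on $Q_T$ itself, which is immediate from the definition.
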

\begin{proof}
Let $\varepsilon >0$. Then there exists $U \subset Q_T$ open with $E \subset U$ and $\capa_p(E) \leq \capa_p(U) \leq \capa_p(E) + \eps$. Now let $v \in \mathcal{W}$ with $v \geq \mathds{1}_U$ a.e. in $Q_T$ and $\capa_p(U) \leq \Vert v \Vert_{\W} \leq \capa_p(U) + \eps$. Especially, we have
\begin{equation*}
\capa_p(E) \leq \Vert v \Vert_{\W} \leq \capa_p(E) + 2\eps.
\end{equation*}
Now set
\begin{equation*}
\hat{v}(t):= \begin{cases} v(-t), ~t \in (-T,0),\\
v(t), ~t \in (0,T),\\
v(2T-t), ~t \in (T, 2T).
\end{cases}
\end{equation*}
Then, $\hat{v} \in \W(-T,2T)$ (see proof of \cite[Theorem 2.11]{DPP}), $\hat{v} \geq \mathds{1}_U$ a.e. in $Q_T$ and $\Vert \hat{v} \Vert_{\W(-T,2T)} \leq 3 \Vert v \Vert_{\W}$. Hence
\begin{equation*}
\Vert \hat{v} \Vert_{\W(-T,2T)}  \leq 3 \Vert v \Vert_{\W} \leq 3\capa_p(E) + 6\eps
\end{equation*}
and therefore $\widetilde{\capa}_p(E) \leq 3 \capa_p(E)$. Now let $w \in \W(-T,2T)$. Then we set $\hat{w} := w_{|Q_T} \in \W$ and we have
\begin{align*}
\Vert \hat{w} \Vert_{\W} &= \Vert \hat{w} \Vert_{L^p(0,T;W_0^{1,p}(D))} + \Vert \partial_t \hat{w} \Vert_{L^{p'}(0,T;W^{-1,p'}(D))} \\
&= \bigg( \int_0^T \Vert \hat{w} \Vert_{W_0^{1,p}(D)}^p \ dt\bigg)^{\frac{1}{p}} + \bigg( \int_0^T \Vert \partial_t \hat{w} \Vert_{W^{-1,p'}(D)}^{p'} \ dt\bigg)^{\frac{1}{p'}} \\
&\leq \bigg( \int_{-T}^{2T} \Vert w \Vert_{W_0^{1,p}(D)}^p \ dt\bigg)^{\frac{1}{p}} + \bigg( \int_{-T}^{2T} \Vert \partial_t w \Vert_{W^{-1,p'}(D)}^{p'} \ dt\bigg)^{\frac{1}{p'}} \\
&= \Vert w \Vert_{\W(-T,2T)}.
\end{align*}
Now, let $E \in \mathcal{B}(Q_T)$ and $\eps >0$. Then, similar as before, there exists $U \subset \tilde{Q}_T$ open such that $E \subset U$ as well as $w \in \W(-T,2T)$ with $w \geq \mathds{1}_U$ a.e. in $\tilde{Q}_T$ such that
\begin{equation*}
\widetilde{\capa}_p(E) \leq \Vert w \Vert_{\W(-T,2T)} \leq \widetilde{\capa}_p(E) + 2\eps.
\end{equation*}
We set $\hat{w} := w_{|Q_T}$. Then $\hat{w} \in \W$ and $\hat{w} \geq \mathds{1}_{U\cap Q_T}$ a.e. in $Q_T$ and
\begin{equation*}
\capa_p(E) \leq\Vert \hat{w} \Vert_{\W} \leq \Vert w \Vert_{\W(-T,2T)} \leq \widetilde{\capa}_p(E) + 2\eps.
\end{equation*}
Hence, we have $\capa_p(E) \leq \widetilde{\capa}_p(E)$.
\end{proof}
\begin{lemma}\label{Lemma 271124_01}
Let $v \in \W + \mathcal{C}_{0,D}$, where
\begin{equation*}
\mathcal{C}_{0,D} := \{\psi \in \mathcal{C}([0,T] \times \overline{D}); ~\psi(t,x)=0 ~\text{for all}~(t,x) \in [0,T] \times \partial D\}.
\end{equation*}
Then, there exists an extension $\bar{v}$ of the $\capa_p$-quasi continuous representative of $v$ to $\overline{Q_T}$ such that $\bar{v}(t,x)=0$ for all $(t,x) \in [0,T] \times \partial D$ and for each $l \in \N$ there exists $\widehat{F}_l \subset \overline{Q_T}$ compact such that $\bar{v}_{|\widehat{F}_l}$ is continuous and $\capa_p(Q_T \setminus \widehat{F}_l) \to 0$ as $l \to \infty$.
\end{lemma}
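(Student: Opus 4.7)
The plan is to reduce the statement to a single element $w \in \mathcal{W}$ and build the extension by approximation. Writing $v = w + \psi$ with $w \in \mathcal{W}$ and $\psi \in \mathcal{C}_{0,D}$, the function $\psi$ is already continuous on $\overline{Q_T}$ and vanishes on $[0,T] \times \partial D$, so it is enough to construct an extension $\bar{w}$ of the $\capa_p$-quasi continuous representative $\tilde{w}$ of $w$ with $\bar{w}=0$ on $[0,T]\times \partial D$ and compact sets on which $\bar{w}$ is continuous. Then $\bar{v} := \bar{w} + \psi$ has all the required properties.

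For the main piece, I would exploit the density of $\mathcal{C}^\infty_c([0,T] \times D)$ in $\mathcal{W}$ (stated at the start of the Appendix, following \cite[Theorem 2.11]{DPP}) to pick a sequence $w_n \in \mathcal{C}^\infty_c([0,T] \times D)$ with $w_n \to w$ in $\mathcal{W}$. Each $w_n$ has compact support in $[0,T]\times D$, hence vanishes in a neighborhood of the lateral boundary and extends by zero to a continuous function on $\overline{Q_T}$, still denoted $w_n$. Passing to a fast subsequence so that $\|w_{n_{k+1}} - w_{n_k}\|_{\mathcal{W}} \leq 2^{-k}$ and using the standard capacity estimate $\capa_p(\{|v|>\lambda\}) \leq C\|v\|_{\mathcal{W}}/\lambda$ together with a Borel--Cantelli argument (as in \cite{DPP, PPP11}), I would produce open sets $U_l \subset Q_T$ with $\capa_p(U_l) \to 0$ such that $w_{n_k} \to \tilde{w}$ uniformly on $Q_T \setminus U_l$.

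Now define $\widehat{F}_l := \overline{Q_T \setminus U_l}$, closure taken in $\overline{Q_T}$. Then $\widehat{F}_l$ is compact, and the inclusion $Q_T \setminus \widehat{F}_l \subset U_l$ gives $\capa_p(Q_T \setminus \widehat{F}_l) \leq \capa_p(U_l) \to 0$. Since the $w_{n_k}$ are continuous on $\overline{Q_T}$, the uniform Cauchy property on the dense subset $Q_T \setminus U_l$ transfers to $\widehat{F}_l$, so $w_{n_k}$ converges uniformly on $\widehat{F}_l$ to a continuous limit $\bar{w}_l$; the limits agree on overlaps because both restrict to $\tilde{w}$ on the complements of $U_l \cup U_{l'}$ in $Q_T$, where the quasi continuous representative is unique up to a capacity-zero set (Lemma \ref{220822_lem01} and Corollary \ref{CorQuasiEqual}). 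On $\widehat{F}_l \cap ([0,T]\times \partial D)$, uniform convergence of functions vanishing near $[0,T]\times \partial D$ forces the limit to be $0$, so extending $\bar{w}$ by $0$ to the rest of $[0,T]\times \partial D$ (and, if necessary, arbitrarily elsewhere in $\overline{Q_T}$) yields the desired extension. Setting $\bar{v} := \bar{w} + \psi$ finishes the construction.

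The main technical obstacle is the quasi uniform convergence step, because the capacity bound on level sets $\{|v|>\lambda\}$ is delicate in the parabolic setting: $|v|$ need not itself lie in $\mathcal{W}$ for a general $v \in \mathcal{W}$, and one must either work directly with the smooth approximants $w_{n_{k+1}}-w_{n_k}$ (whose positive and negative parts retain enough regularity), or invoke the refined capacity/truncation machinery of \cite{DPP, PPP11}. Once this quasi uniform convergence is in hand, the passage to closures in $\overline{Q_T}$ and the identification of the boundary values as zero are essentially formal.
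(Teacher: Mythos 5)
Your argument is correct and establishes the stated lemma, but it takes a genuinely different route from the paper in one respect. Both proofs reduce to $w\in\W$ and both ultimately rest on the same external input, namely the quasi-uniform convergence of a fast $\W$-convergent sequence of functions from $\mathcal{C}^\infty_c([0,T]\times D)$ (the paper simply cites the proof of Lemma 2.20 in \cite{DPP} for this; your Borel--Cantelli sketch, with the caveat you correctly flag about $|v|\notin\W$, is exactly what that proof supplies). Where you diverge is afterwards: you define $\widehat F_l$ directly as the closure in $\overline{Q_T}$ of the set of uniform convergence and transfer the uniform Cauchy property to this closure using continuity of the approximants on $\overline{Q_T}$; this is clean, and since $U_l$ is open one indeed has $\widehat F_l\cap Q_T=Q_T\setminus U_l$, so $Q_T\setminus\widehat F_l\subset U_l$ and the capacity estimate follows. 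The paper instead first reflects $w$ in time to $\tilde Q_T=(-T,2T)\times D$, runs the quasi-uniform convergence there, and then needs the comparison $\capa_p\leq\widetilde{\capa}_p\leq 3\capa_p$ of Lemma \ref{Lemma 261124_04} to return to $Q_T$. Your route is shorter and avoids Lemma \ref{Lemma 261124_04} entirely. What the paper's detour buys is control on the time slices $\{0\}\times D$ and $\{T\}\times D$: after reflection these become interior slices of $\tilde Q_T$, so the relatively closed sets $\tilde F_l$ capture them up to $\widetilde{\capa}_p$-small sets, which is what Lemma \ref{241211_lem01} and Lemma \ref{110724_rem1} later exploit (via \cite[Theorem 2.15]{DPP}) to identify $\bar v(0,\cdot)$ and $\bar v(T,\cdot)$ with the $\mathcal{C}([0,T];L^2(D))$ representative. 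Your construction proves Lemma \ref{Lemma 271124_01} as stated, but on the top and bottom faces it only produces \emph{some} continuous values on $\widehat F_l\cap\partial Q_T$ with no identification; so if you were to continue to the subsequent lemmas you would still need the reflection device (or an equivalent). Two minor remarks: the consistency of the limits across different $l$ is automatic, since they are pointwise limits of one and the same subsequence, so the appeal to Corollary \ref{CorQuasiEqual} is unnecessary; and one should note explicitly that modifying $\tilde w$ on the capacity-zero set $\bigcap_l U_l$ (where you set $\bar w$ arbitrarily) preserves quasi-continuity, which holds because a capacity-null set is contained in open sets of arbitrarily small capacity.
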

\begin{proof}
For $v \in \W + \mathcal{C}_{0,D}$ we write $v=v_1 + v_2$, where $v_1 \in \W$ and $v_2 \in \mathcal{C}_{0,D}$. Since $v_2$ is already defined on $\overline{Q_T}$ satisfying $v_2(t,x)=0$ for all $(t,x) \in [0,T] \times \partial D$, there is nothing to show for $v_2$. Hence, w.l.o.g we can assume $v \in \W$. Let $\hat{v}$ be the extension of $v$ to $\tilde{Q}_T = (-T, 2T) \times D$ as in the proof of Lemma \ref{Lemma 261124_04}. 
Then $\hat{v} \in \W(-T,2T)$. 
Recalling the proof of Lemma 2.20 in \cite{DPP}, there exists a sequence $(\tilde{v}^m)_m \subset \mathcal{C}_c^{\infty}([-T,2T] \times D)$ and a representative $\tilde{v}$ of $\hat{v}$ such that $\tilde{v}^m \to \hat{v}$ in $\W(-T,2T)$ and for each $l \in \N$ there exists a relatively closed set $\tilde{F}_l \subset \tilde{Q}_T$ such that $\tilde{v}^m \to \tilde{v}$ uniformly on $\tilde{F}_l$ and $\widetilde{\capa}_p(\tilde{Q}_T \setminus \tilde{F}_l) \to 0$ as $l \to \infty$. Especially, $\tilde{v}$ is a $\capa_p$-quasi continuous representative of $\hat{v}$ in $\tilde{Q}_T$. Now, we extend $\tilde{v}$ to $\tilde{Q}_T \cup (-T,2T) \times \partial D$ by setting $\tilde{v}(t,x) = 0 $ for all $(t,x) \in (-T,2T) \times \partial D$. 
Then we claim that $\tilde{v}_{| \hat{F}_l}$ is continuous, where $\hat{F}_l := \tilde{F}_l \cup (-T,2T) \times \partial D$. In fact, let $(t,x) \in \hat{F}_l$ and $(t_k,x_k)_k \subset \hat{F}_l$ such that $(t_k,x_k) \to (t,x)$. In the case $(t,x) \in \tilde{F}_l$ there is nothing to show since for all but finite $k \in \N$ we necessarily have $(t_k,x_k) \in \tilde{F}_l$. If $(t,x) \in (-T,2T) \times \partial D$, w.l.o.g we can assume that $(t_k,x_k) \in \tilde{F}_l$ for any $k \in \N$. We have
\begin{align}\label{eq 271124_01}
\begin{aligned}
|\tilde{v}(t,x) - \tilde{v}(t_k,x_k)| &\leq |\tilde{v}(t,x) - \tilde{v}^m(t,x)| + |\tilde{v}^m(t,x) - \tilde{v}^m(t_k,x_k)| + |\tilde{v}^m(t_k,x_k) - \tilde{v}(t_k,x_k)| \\
&= |\tilde{v}^m(t,x) - \tilde{v}^m(t_k,x_k)| + |\tilde{v}^m(t_k,x_k) - \tilde{v}(t_k,x_k)|
\end{aligned}
\end{align}
Let $\eps >0$. Since $\tilde{v}^m \to \tilde{v}$ uniformly on $\tilde{F}_l$, there exists $m_0 \in \N$ such that for all $m \geq m_0$:
\begin{equation*}
\sup\limits_{(s,y) \in \tilde{F}_l}|\tilde{v}^m(s,y) - \tilde{v}(s,y)| < \frac{\eps}{2}.
\end{equation*}
For $m=m_0$, there exists $k_0 \in \N$ such that for all $k \geq k_0$:
\begin{equation*}
|\tilde{v}^m(t,x) - \tilde{v}^m(t_k,x_k)| < \frac{\eps}{2}.
\end{equation*}
Those estimates together with \eqref{eq 271124_01} yield that for $k \geq k_0$
\begin{equation*}
|\tilde{v}(t,x) - \tilde{v}(t_k,x_k)|< \frac{\eps}{2} + \frac{\eps}{2} = \eps.
\end{equation*}
Now, setting $\widehat{F}_l = \hat{F}_l \cap \overline{Q_T}$, we can see that $\widehat{F}_l$ is a compact subset of $\R^{d+1}$ with $\widehat{F}_l \subset \overline{Q_T}$.\\
In fact, since $\tilde{F}_l$ is relatively closed in $\tilde{Q}_T$, there exists $\tilde{E}_l \subset \R^{d+1}$ closed such that $\tilde{F}_l = \tilde{E}_l \cap \tilde{Q}_T$. Therefore we have $\hat{F}_l = \big( \tilde{E}_l \cap \tilde{Q}_T \big) \cup (-T,2T) \times \partial D$ and
\begin{align*}
\widehat{F}_l &= \bigg( \big( \tilde{E}_l \cap \tilde{Q}_T \big) \cup (-T,2T) \times \partial D\bigg) \cap \overline{Q_T} \\
&=  \big( \tilde{E}_l \cap \tilde{Q}_T \cap \overline{Q_T} \big) \cup \big( [0,T] \times \partial D \big) \\
&= \big( \tilde{E}_l \cap [0,T] \times D \big) \cup \big( [0,T] \times \partial D \big) \\
&= \big( \tilde{E}_l \cup [0,T] \times \partial D \big) \cap \overline{Q_T}.
\end{align*}
Now, Lemma \ref{Lemma 261124_04} yields
\begin{equation*}
\capa_p(Q_T \setminus \widehat{F}_l) \leq \widetilde{\capa}_p(Q_T \setminus \widehat{F}_l) \leq \widetilde{\capa}_p(\tilde{Q}_T \setminus \widehat{F}_l) \to 0 \text{ as } l \to \infty.
\end{equation*}
Setting $\bar{v}:= \tilde{v}_{|\overline{Q_T}}$ gives the result.
\end{proof}
\begin{lemma}\label{241211_lem01}
Let $v \in \W + \mathcal{C}_{0,D}$. Then, the extension $\bar{v}$ to $\overline{Q_T}$ as in Lemma \ref{Lemma 271124_01} coincides with the representative of $v$ in $\mathcal{C}([0,T];L^2(D))$. Especially, in this sense, $v(0, \cdot)$ and $v(T, \cdot)$ are well defined a.e. in $D$ for the $\capa_p$-quasi continuous representative of $v$.
\end{lemma}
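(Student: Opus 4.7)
The plan is to reduce to the case $v \in \mathcal{W}$ and then identify $\bar{v}(t,\cdot)$ with the time slice of the $\mathcal{C}([0,T];L^2(D))$ representative for every single $t \in [0,T]$. Writing $v = v_1 + v_2$ with $v_1 \in \mathcal{W}$ and $v_2 \in \mathcal{C}_{0,D}$, the function $v_2$ is already continuous on $\overline{Q_T}$ and is simultaneously its own $\capa_p$-quasi continuous representative and its own continuous-in-time representative, so it contributes identically to both sides. Thus it suffices to treat $v \in \mathcal{W}$.

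For $v \in \mathcal{W}$, denote by $v_c \in \mathcal{C}([0,T];L^2(D))$ its continuous-in-time representative. From the construction of $\bar{v}$ in the proof of Lemma \ref{Lemma 271124_01}, there is a sequence $(\tilde{v}^m) \subset \mathcal{C}_c^\infty([-T,2T]\times D)$ converging to the reflection extension of $v$ in $\mathcal{W}(-T,2T)$ and uniformly on each compact set $\widehat{F}_l \subset \overline{Q_T}$ with $\capa_p(Q_T \setminus \widehat{F}_l) \to 0$. Two consequences follow. First, the continuous embedding $\mathcal{W} \hookrightarrow \mathcal{C}([0,T];L^2(D))$ yields $\tilde{v}^m(t,\cdot) \to v_c(t,\cdot)$ in $L^2(D)$ uniformly in $t \in [0,T]$. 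Second, a standard Borel--Cantelli-type extraction based on subadditivity of capacity (cf.\ \cite[Lemma 2.1]{PPP11}) produces $N \subset Q_T$ with $\capa_p(N)=0$ such that $\tilde{v}^m(t,x) \to \bar{v}(t,x)$ for all $(t,x) \in Q_T \setminus N$.

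The main obstacle is bridging these two convergence modes at a single time slice $\{t\}\times D$, including the endpoints $t=0$ and $t=T$, since a priori the $L^2$-limit $v_c(t,\cdot)$ is only compared to $\bar{v}(t,\cdot)$ for almost every $t$. Here I would invoke the slice property of parabolic capacity already exploited earlier in the paper (see the proof of Lemma \ref{110724_rem1}): by \cite[Theorem 2.15]{DPP} together with Lemma \ref{Lemma 261124_04}, for every $t \in [0,T]$ the slice $N_t := \{x \in D : (t,x) \in N\}$ has Lebesgue measure zero, since $\{t\}\times N_t \subset N$ forces $\widetilde{\capa}_p(\{t\}\times N_t)=0$. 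Consequently, for every single $t$, $\tilde{v}^m(t,\cdot) \to \bar{v}(t,\cdot)$ pointwise a.e. in $D$.

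It then remains to combine the two convergences at a fixed $t$: from the $L^2(D)$-convergence of $\tilde{v}^m(t,\cdot)$ to $v_c(t,\cdot)$ I extract a further subsequence converging a.e. in $D$ to $v_c(t,\cdot)$; but this subsequence also converges a.e. to $\bar{v}(t,\cdot)$ by the previous step, and uniqueness of pointwise limits forces $\bar{v}(t,\cdot) = v_c(t,\cdot)$ a.e. in $D$. The "especially" assertion follows by specialising to $t=0$ and $t=T$.
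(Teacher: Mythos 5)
Your proposal is correct and follows essentially the same route as the paper's proof: take the approximating sequence $(\tilde{v}^m)$ from Lemma \ref{Lemma 271124_01}, use that $\widetilde{\capa}_p(\{t\}\times A)=0$ forces $|A|=0$ to turn q.e.\ convergence into a.e.\ convergence on every time slice, and then identify the limit with $v_c(t,\cdot)$ by extracting an a.e.\ convergent subsequence from the $L^2(D)$-convergence. The only point to tighten is that your exceptional set should be taken in $\tilde{Q}_T=(-T,2T)\times D$ with respect to $\widetilde{\capa}_p$ rather than as $N\subset Q_T$, since the slices $\{0\}\times D$ and $\{T\}\times D$ are not contained in the open cylinder $Q_T$ --- which is precisely why the paper phrases the q.e.\ convergence on $(-T,2T)\times D$, and is consistent with the extension you already invoke.
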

\begin{proof}
Let $\hat{v}$ be as in the proof of Lemma \ref{Lemma 271124_01}, $\tilde{v}$ the representative in $\mathcal{C}([-T, 2T];L^2(D))$ and $(\tilde{v}^m)_m \subset \mathcal{C}_c([-T,2T] \times D)$ such that $\tilde{v}^m \to \tilde{v}$ in $\mathcal{C}([-T, 2T];L^2(D))$ as well as $\tilde{v}^m \to \hat{v}$ $\widetilde{\capa_p}$-q.e. in $(-T, 2T) \times D$. Now, let $t \in [0,T]$. Then, since $\widetilde{\capa_p}(\{t\} \times A)=0$ if and only if $A\subset D$ is Borel measurable with measure zero, we obtain $\tilde{v}^m(t, \cdot) \to \hat{v}(t, \cdot) = \bar{v}(t, \cdot)$ a.e. in $D$. On the other hand we have $\tilde{v}^m(t) \to \tilde{v}(t)$ in $L^2(D)$, hence this convergence holds true a.e. in $D$ for a subsequence. Therefore $\bar{v}(t, \cdot) = \tilde{v}(t, \cdot)$ a.e. in $D$. Since $\tilde{v}_{|[0,T]}$ is a representative of $v$ in $\mathcal{C}([0,T]; L^2(D))$, this yields the result.
\end{proof}
\textbf{Acknowledgments} This work has been partly supported by the German Research Foundation project (ZI 1542/3-1) and the Procope programme for Project-Related Personal Exchange (France-Germany). The research of Yassine Tahraoui is funded by the European Union (ERC, NoisyFluid, No. 101053472). Views and opinions expressed are however those of the author only and do not necessarily reflect those of the European Union or the European Research Council. Neither the European Union nor the granting authority can be held responsible for them.
\bibliographystyle{plain}
\bibliography{obstacle-additive.bib}

\begin{thebibliography}{10}

\bibitem{ABM05}
H.~Attouch, G.~Buttazzo, and G.~Michaille.
\newblock {\em Variational analysis in Sobolev and BV spaces. Applications to
  PDEs and optimization. MPS/SIAM Series on Optimization, 6. Society for
  Industrial and Applied Mathematics (SIAM)}.
\newblock SIAM, Philadelphia, PA, 2006.

\bibitem{bauzet2017}
C.~Bauzet, E.~Bonetti, G.~Bonfanti, F.~Lebon, and G.~Vallet.
\newblock A global existence and uniqueness result for a stochastic
  {A}llen--{C}ahn equation with constraint.
\newblock {\em Mathematical Methods in the Applied Sciences},
  40(14):5241--5261, 2017.

\bibitem{BISWAS2023}
I.~H. Biswas, Y.~Tahraoui, and G.~Vallet.
\newblock Obstacle problem for a stochastic conservation law and {L}ewy
  {S}tampacchia inequality.
\newblock {\em Journal of Mathematical Analysis and Applications}, 527(1, Part
  1):127356, 2023.

\bibitem{Boccardo}
L.~Boccardo and Th. Gallou\"{e}t.
\newblock Nonlinear elliptic equations with right hand side measures.
\newblock {\em Comm. Partial Differential Equations}, 17(3,4):641--655, 1992.

\bibitem{Brezis}
H.~Brezis.
\newblock {\em Functional {A}nalysis, {S}obolev {S}paces and {P}artial
  {D}ifferential {E}quations}.
\newblock Universitext. Springer, New York, 2011.

\bibitem{DL}
R.~Dautray and J.L. Lions.
\newblock {\em {M}athematical {A}nalysis and {N}umerical {M}ethods for
  {S}cience and {T}echnology {V}ol. 5}.
\newblock Springer-Verlag, Berlin, Heidelberg, New York, 2000.

\bibitem{Matoussi2014}
L.~Denis, A.~Matoussi, and J.~Zhang.
\newblock The obstacle problem for quasilinear stochastic {PDE}s: Analytical
  approach.
\newblock {\em The Annals of Probability}, 42(3):865--905, 2014.

\bibitem{Matoussi2015}
L.~Denis, A.~Matoussi, and J.~Zhang.
\newblock The obstacle problem for quasilinear stochastic {PDE}s with
  non-homogeneous operator.
\newblock {\em Discrete and Continuous Dynamical Systems}, 35(11):5185--5202,
  2015.

\bibitem{Pardoux1993}
C.~Donati-Martin and \'{E}. Pardoux.
\newblock White noise driven {SPDE}s with reflection.
\newblock {\em Probability Theory and Related Fields}, 95:1--24, 1993.

\bibitem{DPP}
J.~Droniou, A.~Porretta, and A.~Prignet.
\newblock Parabolic capacity and soft measures for nonlinear equations.
\newblock {\em Potential Anal.}, 19(2):99--161, 2003.

\bibitem{DuvautLions72}
C.~Duvaut and J.~L. Lions.
\newblock {\em Les in{\'e}quations en m{\'e}canique et en physique}, volume~21.
\newblock Paris: Dunod. XX, 1972.

\bibitem{E95}
R.~E. Edwards.
\newblock {\em Functional {A}nalysis}.
\newblock Dover Publications, Inc., New York, 1995.

\bibitem{FG95}
F.~Flandoli and D.~Gatarek.
\newblock Martingale and stationary solutions for stochastic {N}avier-{S}tokes
  equations.
\newblock {\em Probab. Theory Relat. Fields}, 102:367--391, 1995.

\bibitem{Gr}
P.~Grisvard.
\newblock {\em Elliptic Problems in Nonsmooth Domains}.
\newblock Pitman, Boston, 1985.

\bibitem{GuibeMTV2020}
O.~Guib{\'e}, A.~Mokrane, Y.~Tahraoui, and G.~Vallet.
\newblock {L}ewy-{S}tampacchia's inequality for a pseudomonotone parabolic
  problem.
\newblock {\em Advances in Nonlinear Analysis}, 9(1):591--612, 2020.

\bibitem{GUIBE2024}
O.~Guib{\'e}, Y.~Tahraoui, and G.~Vallet.
\newblock On {L}ewy {S}tampacchia inequalities for a pseudomonotone parabolic
  obstacle problem with {L}$^1$-data.
\newblock {\em Nonlinear Analysis: Real World Applications}, 77:104030, 2024.

\bibitem{Haussmann1989}
U.G. Haussmann and \'{E}. Pardoux.
\newblock Stochastic variational inequalities of parabolic type.
\newblock {\em Applied Mathematics and Optimization}, 20(1):163--192, 1989.

\bibitem{H03}
L.~H\"ormander.
\newblock {\em The analysis of linear partial differential operators. {I}}.
\newblock Classics in Mathematics. Springer-Verlag, Berlin, 2003.

\bibitem{HNVW16}
T.~Hytonen, J.~van Neerven, M.~Veraar, and L.~Weis.
\newblock {\em Analysis in Banach Spaces, Volume I: Martingales and
  Littlewood-Paley Theory}.
\newblock Springer, 2016.

\bibitem{M74}
C.-M. Marle.
\newblock {\em Mesures et probabilit\'{e}s}, volume~19 of {\em Collection
  Enseignement des Sciences [Collection: The Teaching Collection Enseignement
  des Sciences [Collection: The Teaching of Science]}.
\newblock Hermann, Paris, 1974.

\bibitem{Matoussi2021LDP}
A.~Matoussi, W.~Sabbagh, and T.~Zhang.
\newblock Large deviation principles of obstacle problems for quasilinear
  stochastic {PDE}s.
\newblock {\em Applied Mathematics $\&$ Optimization}, 83(2):849--879, 2021.

\bibitem{Matoussi2010}
A.~Matoussi and L.~Stoica.
\newblock The obstacle problem for quasilinear stochastic {PDE}'s.
\newblock {\em The Annals of Probability}, 38(3):1143 -- 1179, 2010.

\bibitem{MTV19}
A.~Mokrane, Y.~Tahraoui, and G.~Vallet.
\newblock On {L}ewy--{S}tampacchia {I}nequalities for a {P}seudomonotone
  {E}lliptic {B}ilateral {P}roblem in {V}ariable {E}xponent {S}obolev {S}paces.
\newblock {\em Mediterranean Journal of Mathematics}, 16(3):64, 2019.

\bibitem{Pardoux1992}
D.~Nualart and \'{E}. Pardoux.
\newblock White noise driven quasilinear {SPDE}s with reflection.
\newblock {\em Probability Theory and Related Fields}, 93:77--89, 1992.

\bibitem{Par21}
\'{E}. Pardoux.
\newblock {\em Stochastic Partial Differential Equations}.
\newblock Springer Briefs in Mathematics. Springer, Cham, 2021.

\bibitem{PPP11}
F.~Petitta, C.~Ponce, and A.~Porretta.
\newblock Diffuse measures and nonlinear parabolic equations.
\newblock {\em J. Evol. Equ.}, 11:861--905, 2011.

\bibitem{P}
M.~Pierre.
\newblock Parabolic capacity and {S}obolev spaces.
\newblock {\em SIAM J. Math. Anal.}, 14(3):522--533, 1983.

\bibitem{RWZ13}
M.~R\"ockner, F.-Y. Wang, and T.~Zhang.
\newblock Stochastic generalized porous media equations with reflection.
\newblock {\em Stochastic Process. Appl.}, 123(11):3943--3962, 2013.

\bibitem{Rudin}
W.~Rudin.
\newblock {\em Reelle und Komplexe Analysis}.
\newblock Oldenbourg Wissenschaftsverlag, 2009.

\bibitem{STVZ2023}
N.~Sapountzoglou, Y.~Tahraoui, G.~Vallet, and A.~Zimmermann.
\newblock Stochastic pseudomonotone parabolic obstacle problem: well-posedness
  $\&$ {L}ewy-{S}tampacchia's inequalities.
\newblock {\em arXiv preprint arXiv:2305.16090}, 2023.

\bibitem{Yassine2020}
Y.~Tahraoui.
\newblock {\em Parabolic problems with constraints, deterministic and
  stochastic.}
\newblock PhD thesis, Universit{\'e} de Pau et des Pays de l'Adour; Ecole
  normale sup{\'e}rieure de Kouba, 2020.

\bibitem{Tahraoui2024invariant}
Y.~Tahraoui.
\newblock Invariant measures for stochastic {T}-monotone parabolic obstacle
  problems.
\newblock {\em arXiv preprint arXiv:2311.02637}, 2024.

\bibitem{Tahraoui2024LDP}
Y.~Tahraoui.
\newblock Large deviations for an obstacle problem with {T}-monotone operator
  and multiplicative noise.
\newblock {\em arXiv preprint arXiv:2308.02206}, 2024.

\bibitem{TahraouiVallet2022}
Y.~Tahraoui and G.~Vallet.
\newblock {L}ewy--{S}tampacchia's inequality for a stochastic {T}-monotone
  obstacle problem.
\newblock {\em Stochastics and Partial Differential Equations: Analysis and
  Computations}, 10(1):90--125, 2022.

\bibitem{VZ21}
G.~Vallet and A.~Zimmermann.
\newblock Well-posedness for nonlinear {SPDE}s with strongly continuous
  perturbation.
\newblock {\em Proc. Roy. Soc. Edinburgh Sect. A}, 151(1):265--295, 2021.

\bibitem{VZ2024}
G.~Vallet and A.~Zimmermann.
\newblock Well-posedness for nonlinear {SPDE}s with strongly continuous
  perturbation - {ERRATUM}.
\newblock {\em Proc. Roy. Soc. Edinburgh Sect. A}, pages 1--5, 2024.

\bibitem{Zhang2009}
T.~Xu and T.~Zhang.
\newblock White noise driven {SPDE}s with reflection: existence, uniqueness and
  large deviation principles.
\newblock {\em Stochastic processes and their applications},
  119(10):3453--3470, 2009.

\bibitem{Zambotti2001}
L.~Zambotti.
\newblock A reflected stochastic heat equation as symmetric dynamics with
  respect to the 3-d {B}essel bridge.
\newblock {\em Journal of Functional Analysis}, 180(1):195--209, 2001.

\bibitem{Zambotti2017}
L.~Zambotti.
\newblock {\em Random obstacle problems}, volume 2181 of {\em Lecture Notes in
  Mathematics}.
\newblock Springer, 2017.

\bibitem{zhang2010}
T.~Zhang.
\newblock White noise driven {SPDE}s with reflection: strong {F}eller
  properties and {H}arnack inequalities.
\newblock {\em Potential Analysis}, 33:137--151, 2010.

\end{thebibliography}

\end{document}